\documentclass[11pt,a4paper,reqno]{amsart}
\usepackage{amsmath, amssymb, amsthm, amscd, hyperref}
\usepackage{amsfonts}
\usepackage[utf8]{inputenc}
\usepackage[T1]{fontenc}
\usepackage[margin=1.1in]{geometry}
\newtheorem{theorem}{Theorem}[section]

\newtheorem{lemma}[theorem]{Lemma}
\newtheorem{proposition}[theorem]{Proposition}
\newtheorem{corollary}[theorem]{Corollary}
\newtheorem{remark}[theorem]{Remark}

\begin{document}
	\title[Patrick Omukuba]{The Moduli Space of Determinantal Representations of Cubic Surfaces and Invariant Theory of Root Systems}
	\author{Patrick Omukuba}
    \address{Institut für Mathematik ,JGU-Mainz}
    \email{paomukub@uni-mainz.de}

\date{\today}
	
	\begin{abstract}
		We present a framework for studying the moduli space of linear determinantal representations for flat families of complex projective cubic surfaces across  singularity boundaries. First, we deal with the classical deformation theory where a surface $S_0$ possesses a rational double point (RDP) of type $E_6$. By establishing a global simultaneous resolution over a finite ramified Galois covering of the global parameter slice, we realize the relative moduli space $\mathbb{H}_{\mathcal{N}}$ as a diagonal Weyl quotient $(\mathfrak{h} \times R)/W(E_6)$. Second, we extend this classification to the critical boundary configuration where $S_0$ contains a unique isolated simple elliptic singularity of type $\tilde{E}_6$. Because the local monodromy group becomes infinite, the classical simultaneous resolution framework completely breaks down. We bypass this obstruction by constructing a geometric substitute $\mathcal{Z}$  derived from the filtration of Looijenga's invariant algebra of affine Weyl groups. We also show that the vector bundle $\mathcal{Z}$  decomposes into a direct sum of line bundles over the structural elliptic curve $E$, with degrees explicitly matching the negative Coxeter marks of the highest root of $E_6$. Utilizing Riemann's extension theorem and vector bundle rigidity over elliptic curves, we settle our isomorphism result, proving that the moduli space $\bar{\mathbb{H}}$ representing the semiuniversal family  is globally isomorphic to the total space of $\mathcal{Z}$.
	\end{abstract}
	
	\maketitle
	
	\section{Introduction and Geometric Setup}
	
	A classical fundamental result in algebraic geometry dictates that every smooth projective cubic surface $S \subset \mathbb{P}^3$ can be realized as the blow-up of the projective plane $\mathbb{P}^2$ at six points in general linear position \cite{Dolgachev2012, Hartshorne1977}. Such a surface contains exactly 27 lines \cite{BruceWall1979}. Its underlying configuration geometry is intimately governed by the root system $R$ of the exceptional Lie algebra $E_6$. When these six points fall into special alignments, the anticanonical embedding maps the blown-up surface to a singular cubic surface $S_0 \subset \mathbb{P}^3$ containing isolated rational double points (RDPs), also known as canonical Du Val singularities \cite{Demazure1980,Slowdowy}. 
	
	Let $V$ be a four-dimensional complex vector space. Homogeneous cubic polynomials $\mathbb{P}(S^3 V^*) \cong \mathbb{P}^{19}$ parameterize projective cubic surfaces in $\mathbb{P}^3$. The classification of linear determinantal representations of such surfaces parameterizing the configurations of matrices whose determinants define the surface equation is deeply intertwined with the geometry of their lines and the linear systems of arithmetically Cohen--Macaulay (aCM) twisted cubics supported on their fibers \cite{Lehn2017, Buckley2006}. 
	
	When the surface $S$ is non-singular, it admits exactly 72 distinct equivalence classes of linear determinantal representations \cite{Buckley2006}. These correspond bijectively to the 72 root vectors of the root lattice of type $E_6$. When the normal surface $S_0$ is singular some lines collapse and hence the root system. This paper is two fold: First, we characterize the moduli space of determinantal representations of the semiuniversal family of the worst  isolated RDP, i.e of type $E_6$ in terms of the quotient by the finite Weyl group. Then, we extend to the infinite-monodromy situation of isolated Simple Elliptic Singularities of type $\tilde{E}_6$.
	
	\subsection*{Acknowledgments}The results in this paper are part of the authors PhD thesis \cite{Patrick Omukuba2026}, I am grateful to Manfred Lehn  for introducing me to the subject and his constant support. I also wish to express my sincere gratitude to the Institut für Mathematik-Johannes Gutenberg University Mainz for providing a conducive research environment. 
	
	\subsection{The RDP case}
	
	To eliminate coordinate redundancy arising from the natural action of $\text{PGL}(4,\mathbb{C})$, we fix a smooth, four-dimensional transversal slice $\mathcal{N} \subset \mathbb{P}^{19}$ to the $\text{GL}(V)$-orbit of $[S_0]$ passing through $[S_0]$. Restricting the universal family yields a flat global algebraic family of projective surfaces $p : \mathcal{S}_{\mathcal{N}} \to \mathcal{N}$. Brieskorn's classical construction models the local semiuniversal deformation space $\mathcal{U} \cong \mathbb{C}^6$ as the orbit space $\mathfrak{h}/W(E_6)$, where $\mathfrak{h}$ is the Cartan subalgebra \cite{Brieskorn1970}.	By constructing a global covering base space via the fiber product $\tilde{\mathcal{N}} := \mathcal{N} \times_\mathcal{U} \mathfrak{h}$, we unramify the finite local monodromy action of the Weyl group $W(E_6)$. This allows the point configurations to be tracked by smooth, non-intersecting sections, resolving the family into a smooth variety $\tilde{\mathcal{S}}$ via sequential algebraic blow-ups:
	
	\begin{theorem}\label{thm:rdp_resolution}
		Let $p : \mathcal{S}_{\mathcal{N}} \to \mathcal{N}$ be the semiuniversal deformation family of a projective cubic surface $S_0$ with an isolated rational double point of type $E_6$.
		\begin{enumerate}
			\item There exists a finite branched Galois covering $\psi : \tilde{\mathcal{N}} \to \mathcal{N}$ with Galois group isomorphic to the Weyl group $W(E_6)$, and a smooth variety $\tilde{\mathcal{S}}$ forming a flat, smooth projective fibration $\tilde{p} : \tilde{\mathcal{S}} \to \tilde{\mathcal{N}}$.
			\item There exists a proper birational morphism {$\pi : \tilde{\mathcal{S}} \to \mathcal{S}_{\mathcal{N}} \times_{\mathcal{N}} \tilde{\mathcal{N}}$} that resolves the fibers fiberwise.
			\item The relative moduli space $\mathbb{H}_{\mathcal{N}}$ of linear determinantal representations over the deformation slice is canonically isomorphic to the global diagonal quotient space:
			\[\mathbb{H}_{\mathcal{N}} \cong (\mathfrak{h} \times R)/W(E_6)\]
			where $\mathfrak{h}$ is the Cartan subalgebra of type $E_6$ and $R$ is its corresponding  root system.
		\end{enumerate}
	\end{theorem}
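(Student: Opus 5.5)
The proof has three stages: Brieskorn--Tyurina simultaneous resolution for the local germ, globalisation to the projective family via the del Pezzo structure, and identification of determinantal representations with roots.

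\emph{Stage 1: the covering $\psi$.} By semiuniversality and the choice of $\mathcal{N}$ as a transversal slice, there is a morphism $\phi:\mathcal{N}\to\mathcal{U}\cong\mathfrak{h}/W(E_6)$. Near $[S_0]$ it is a local isomorphism (after shrinking, or étale-locally), since both spaces are $6$-dimensional. We set $\tilde{\mathcal N}=\mathcal N\times_{\mathcal U}\mathfrak h$. Since $\mathfrak h\to\mathfrak h/W$ is a finite Galois cover with group $W(E_6)$, branched along the reflection hyperplanes, its pullback $\psi$ is finite and Galois with group $W(E_6)$, branched over the discriminant. This gives the covering in (1).

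\emph{Stage 2: the simultaneous resolution.} Brieskorn's theorem gives a simultaneous resolution of the local semiuniversal family after base change to $\mathfrak h$. I will not glue it directly; instead I will build $\tilde{\mathcal S}$ globally using the del Pezzo structure.
\begin{enumerate}
\item Over the complement of the discriminant, each fibre is a smooth cubic with a marking of $H^2$, i.e.\ an identification $K^\perp\cong Q(E_6)$. The marking is monodromy-trivial on $\tilde{\mathcal N}$ because the cover kills the $W(E_6)$ monodromy.
\item A marking determines six disjoint exceptional classes, and hence six sections of a $\mathbb P^2$-bundle.
\item The key claim is that these six point-sections extend over all of $\tilde{\mathcal N}$ as sections that are possibly infinitely near but never coincide scheme-theoretically in a bad way. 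The reason is that the coordinates on $\mathfrak h$ are exactly the period map $\alpha\mapsto\int$ (or, algebraically, the values of roots on the Picard lattice via $\operatorname{Pic}^0$ of the anticanonical cuspidal/nodal curve).
\item Sequentially blowing up these sections in a $\mathbb P^2$-bundle over $\tilde{\mathcal N}$ yields a smooth projective $\tilde p$ whose fibres are weak del Pezzo surfaces of degree $3$.
\item The relative anticanonical map $|-K|$ contracts exactly the $(-2)$-curves, namely the roots vanishing at the given point of $\mathfrak h$. Its image is identified with $\mathcal S_{\mathcal N}\times_{\mathcal N}\tilde{\mathcal N}$ by the universal property, giving $\pi$ in (2).
\end{enumerate}
Flatness follows from smoothness of $\tilde{\mathcal S}$, smoothness of the base, and equidimensional fibres (miracle flatness).

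\emph{Stage 3: the moduli space.} For a smooth fibre, linear determinantal representations correspond to ACM twisted-cubic linear systems, i.e.\ classes $D$ with $D^2=1$ and $D\cdot K=-3$. These are in bijection with roots via $D\mapsto D+K-\ldots$, after fixing the marking; this is the 72 of \cite{Buckley2006}. On $\tilde{\mathcal S}$ every root class $r\in R$ gives a relative line bundle $L_r$ with the twisted-cubic class. I will show that $R^0\tilde p_*L_r$ is locally free of rank $3$ and that $\pi_*L_r$ is a relatively ACM sheaf on the singular fibres. Indeed, $R^1$ vanishes by Kawamata--Viehweg on the weak del Pezzo fibres, and the pushforward to a rational-singularity surface preserves the resolution $0\to\mathcal O(-1)^3\to\mathcal O^3\to\cdot$. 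This gives a morphism $\tilde{\mathcal N}\times R\to\mathbb H_{\mathcal N}$, equivariant for the diagonal $W$-action, where $W$ acts on $R$ through its action on the marking.

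It is bijective modulo $W$, because of two facts. First, over each point the map $R\to\{\text{representations}\}$ is surjective: every ACM sheaf on a singular fibre pulls back to a twisted-cubic class on the resolution. Second, two roots give the same sheaf exactly when they differ by the stabiliser, i.e.\ the parabolic subgroup generated by the vanishing roots, whose reflections are twists by $(-2)$-curves that $\pi$ contracts. Normality of the target plus Zariski's main theorem then give the isomorphism $(\tilde{\mathcal N}\times R)/W\cong\mathbb H_{\mathcal N}$. Identifying $\tilde{\mathcal N}$ with $\mathfrak h$ locally via Stage 1 gives (3).

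I expect the main obstacle to be this final injectivity statement over the discriminant. There one must show that reflections in vanishing roots act trivially on $\pi_*L_r$, and conversely that non-stabiliser elements give non-isomorphic sheaves. I would first try a direct computation: $L_{s_\alpha r}=L_r(\pm C_\alpha)$, and for a $(-2)$-curve $C_\alpha$ with $L_r\cdot C_\alpha=\pm1$ the pushforwards agree.
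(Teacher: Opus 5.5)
There is a genuine gap in Stage~3, and it lies exactly along the discriminant. Three of your claims fail on the hyperplane $\{r=0\}\subset\tilde{\mathcal N}$, that is, whenever $r$ lies in the subsystem $R_t$ spanned by the $(-2)$-curves of $\tilde S_t$. These are: that $R^1$ vanishes by Kawamata--Viehweg, that $\tilde p_*L_r$ is a rank-$3$ bundle computing $H^0(\tilde S_t,L_r)$, and that $\pi_*L_r$ has a resolution $0\to\mathcal O(-1)^3\to\mathcal O^3$.

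\begin{itemize}
\item If $r=C$ is a $(-2)$-curve, then $C$ is a fixed component of $|C-K|$. So $h^0(L_r)=h^0(-K)=4$ and $h^1(L_r)=1$; Kawamata--Viehweg does not apply because $(r-2K)\cdot C=-2$. Moreover $\pi_*L_r=\mathcal O_{S_t}(1)$.
\item If $r=-C$ at an $A_1$-point $p$, then $\pi_*L_r=\mathfrak m_p(1)$. This is not aCM, since $H^1(\mathfrak m_p(-1))\neq 0$.
\end{itemize}

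These are precisely the nCM classes of \cite{Lehn2017}. So your map $\tilde{\mathcal N}\times R\to\mathbb H_{\mathcal N}$ is defined only off the incidence divisor $\{(t,r): r(t)=0\}$. The fibre of $\mathbb H_{\mathcal N}$ over $\psi(t)$ is $(R\setminus R_t)/W(R_t)$, not $R/W(R_t)$.

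Over $[S_0]$ this fibre is empty: the $E_6$ cubic has no linear determinantal representation at all. Its class group is generated by its unique line $\ell$, with $3\ell\sim H$, so the only rank-one reflexive sheaf of degree $3$ is $\mathcal O_{S_0}(1)$, which has four sections. By contrast, $(\mathfrak h\times R)/W(E_6)\to\mathfrak h/W(E_6)$ is finite and surjective, so it has a point over $[S_0]$.

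Your computation $L_{s_\alpha r}=L_r(\pm C_\alpha)$ does correctly handle the $W(R_t)$-identifications on $R\setminus R_t$. Even so, your method can give at most $\mathbb H_{\mathcal N}\cong\{(t,r)\in\mathfrak h\times R: r(t)\neq 0\}/W(E_6)$. To reach all of $(\mathfrak h\times R)/W(E_6)$ you must enlarge $\mathbb H_{\mathcal N}$ to include the nCM limits, namely the relative space of components of generalized twisted cubics, whose fibres are $R/W(R_t)$. You would then have to redo surjectivity, injectivity and the normality needed for Zariski's main theorem on that larger space.

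The paper proves (3) differently. It base-changes $\mathbb H_{\mathcal N}$ to $\tilde{\mathcal N}$, treats the result as a $72$-sheeted \'etale cover, trivializes it by simple connectivity, and divides by $W(E_6)$. It has the same blind spot: over the discriminant the base change has at most $|R/W(R_t)|<72$ points per fibre, so it is not $\tilde{\mathcal N}\times R$; at best its normalization is. Your explicit comparison map is the better tool precisely because it shows where this happens.

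Stage~2, item~3, is also only asserted, yet (1) and (2) rest entirely on it. Your ``possibly infinitely near'' is the right formulation, and it is better than the paper's six disjoint sections of $\mathbb P^2\times\tilde{\mathcal N}$, which cannot exist at $t_0$. The reason is that $\tilde S_0$ has a unique $(-1)$-curve, the strict transform of the unique line. Hence every blow-down to $\mathbb P^2$ contracts a chain $p_1\prec\cdots\prec p_6$ of infinitely near points lying over one point.

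But then $\sigma_2,\dots,\sigma_6$ are sections of the successive blow-ups, not of a $\mathbb P^2$-bundle. Their extension across the codimension-one discriminant does not follow from ``the coordinates on $\mathfrak h$ are periods''. That slogan is Pinkham's route to Brieskorn's theorem. Its substance is that the period map of the smooth family over the space of marked, possibly infinitely near configurations is a local isomorphism onto $\mathfrak h$, compatible with $\mathcal N\to\mathcal U$. As written, you assume this.

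The economical fix is to take Brieskorn's simultaneous resolution of the germ after base change to $\mathfrak h$. Glue it to $\mathcal S_{\mathcal N}\times_{\mathcal N}\tilde{\mathcal N}$ away from the singular point; this gives (1) and (2) at once. If you want the blow-down structure, use the unique marking that is geometric on every fibre, and contract relative $(-1)$-curves one at a time. Such a marking exists because the effective roots at $t$ are $R_t\cap R^+$ for one fixed positive system $R^+$, by semicontinuity along each root hyperplane.
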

	
	\subsection{The Simple Elliptic case}
	
	We now address the critical boundary configuration where the surface $S_0 \subset \mathbb{P}^3$ contains a unique isolated simple elliptic singularity of degree $d = 3$ (type $\tilde{E}_6$) \cite{Saito1974}. Geometrically, such a surface is realized as the affine cone over a smooth elliptic curve $E \subset \mathbb{P}^2$ whose zero section has been collapsed to an isolated vertex point \cite{Looijenga1976}. The deformation theory of simple elliptic singularities is fundamentally different from the RDP case. Because the exceptional divisor of the minimal resolution is a smooth elliptic curve $E$ rather than a configuration of rational curves, the local vanishing cycles generate an infinite affine root system \cite{Looijenga1978}\cite{Saito1974}. The corresponding local monodromy group is an infinite extension of the finite Weyl group $W(E_6)$ by the translation lattice \cite{Looijenga1978}. Consequently, no classical simultaneous resolution can exist over any finite cover of the base, causing the Brieskorn framework to completely break down. To bypass this fundamental obstruction, we introduce a geometric substitute space. In particular we establish the following:
	\begin{theorem}\label{TheBundle}
		There exists a rank $\ell=6$ vector bundle $\mathcal{Z}\to E$ and a map $\phi:\mathcal{Z}\to \mathbb{C}^{\ell+1}$ contracting the zero section to a point and for any class $\bar{x}\in \mathbb{C}^{\ell+1}$, the fibre $\phi^{-1}(x)$ is given by the collapsed root system $R/W_\alpha.$
	\end{theorem}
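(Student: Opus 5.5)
We construct $\mathcal{Z}$ from Looijenga's description of the semiuniversal deformation of a simple elliptic singularity of type $\tilde{E}_6$. Write $Q = Q(E_6)$ for the root lattice, $\mathfrak{h} = Q\otimes\mathbb{C}$, $\tau$ for the modulus of $E = \mathbb{C}/(\mathbb{Z}+\tau\mathbb{Z})$, and $W_{\mathrm{aff}} = W(E_6)\ltimes (Q\oplus\tau Q)$ for the affine monodromy group. Looijenga's result, which we take as the starting point, is that the ring of $W_{\mathrm{aff}}$-theta functions is a graded polynomial algebra. The generators have degrees equal to the Coxeter marks $1,1,1,2,2,2,3$ of the extended $E_6$ diagram, giving $\ell+1=7$ generators. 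The plan then has three steps.

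\emph{Step one: build $\mathcal{Z}$.} Use the identification of the abelian variety $\mathfrak{h}/(Q\oplus\tau Q)$ with $\mathrm{Hom}(Q^\vee,E)\cong E\otimes Q^\vee$. Decompose this using the fundamental coweights against the simple roots, together with the highest root $\tilde\alpha = \sum m_i\alpha_i$. Take $\mathcal{Z}$ to be the dual of the rank-$6$ bundle of theta functions of level one, viewed as a bundle over the degree-one component $E$. The filtration of the invariant algebra by theta-function level should split this bundle as $\bigoplus_i \mathcal{O}_E(-m_i\, o)$ twisted appropriately, which produces the line-bundle summands with degrees $-m_i$ announced in the abstract. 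Since every summand has negative degree, the zero section $E\subset\mathcal{Z}$ has negative-definite normal bundle. Grauert's criterion then allows it to be contracted to a point.

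\emph{Step two: define $\phi$.} Let $\phi$ be the map given by the seven Looijenga generators, viewed as sections of $\mathcal{O}_{\mathcal{Z}}$. Equivalently, $\phi$ is the affinization $\mathcal{Z}\to \operatorname{Spec}\bigoplus_k H^0(E,\mathrm{Sym}^k\mathcal{Z}^\vee)$. By Looijenga's polynomiality theorem the target is $\mathbb{C}^{\ell+1}$. Negativity from step one shows $\phi$ is proper, birational onto its image, and collapses exactly the zero section. Away from the zero section, $\phi$ is the quotient by the finite group $W(E_6)$ acting on the fibres of $\mathcal{Z}$; this is a local check using Riemann's extension theorem across the codimension-$\geq 2$ locus.

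\emph{Step three: identify the fibres.} Fix $x$ away from the origin and let $W_\alpha\subset W(E_6)$ be the stabilizer of a lift $z\in\mathcal{Z}$. This is a parabolic subgroup generated by the reflections in roots $\alpha$ with $\alpha(z)\in Q\oplus \tau Q$, i.e.\ the roots that vanish on the corresponding point of $E\otimes Q^\vee$. The fibre over $x$ of the relative root datum is then the set of $W_\alpha$-orbits of $R$, that is, $R/W_\alpha$. This matches the RDP picture of Theorem~\ref{thm:rdp_resolution}, applied to the Du Val singularities of the nearby fibre: roots in the same $W_\alpha$-orbit correspond to determinantal representations that collapse together.

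\textbf{Expected main obstacle.} The hard part is step one: proving the splitting of the level-one theta bundle and the exact degrees $-m_i$. I would first attempt it by computing Chern classes through Looijenga's filtration and then invoking Atiyah's classification of bundles on elliptic curves to rule out nontrivial extensions.
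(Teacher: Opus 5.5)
Your outline uses the same ingredients as the paper: Looijenga's invariants, a rank-$6$ bundle over $E$, $\phi$ given by the generators of $\mathcal{A}_W$, and contraction of the zero section. It is missing the construction that produces both the rank $6$ and the roots in the fibres. In the paper, $\mathcal{Z}$ is built relative to the highest root:
\begin{itemize}
\item $\theta\colon Q^\vee\to\mathbb{Z}$ induces $\varphi_\theta\colon A\to E$, whose fibres are translates of $A_0=Q^\vee_\theta\otimes E$ with $Q^\vee_\theta=\ker\theta$ (of type $A_5$);
\item the stabilizer $W_\theta$ of $\theta$ (the paper's $W_\alpha$ with $\alpha=\theta$) is the Weyl group of $Q^\vee_\theta$ and acts fibrewise;
\item $\mathcal{Z}=\operatorname{Spec}_E\Omega$, where $\Omega$ is the descent to $E$ of $\bigoplus_k(\bar\alpha_*L^k)^{W_\theta}$, and Looijenga's theorem for $Q^\vee_\theta$ supplies the $\ell=6$ fibre coordinates.
\end{itemize}
Your ``rank-$6$ bundle of theta functions of level one over the degree-one component $E$'' has no definition without this step. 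Theta functions on $A$ form a graded ring, not a bundle over $E$; it is the fibration $\varphi_\theta$, together with $W_\theta$-invariance along its fibres, that produces a rank-$6$ bundle. The same step is also what makes the Looijenga generators functions on $\mathcal{Z}$ at all, since $\Gamma(\mathcal{Z},\mathcal{O}_{\mathcal{Z}})=\bigoplus_k H^0(A,\mathbb{L}^{\otimes k})^{W_\theta}\supset\mathcal{A}_W$.

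This omission is what makes your Steps two and three inconsistent. The ring $\bigoplus_k H^0(E,\operatorname{Sym}^k\mathcal{Z}^\vee)$ is the $W_\theta$-invariant theta ring, not $\mathcal{A}_W$, so Looijenga's polynomiality theorem does not identify the affinization of $\mathcal{Z}$ with $\mathbb{C}^7$. The affinization is the cone over $A/W_\theta$. The map $\phi$ is this contraction followed by the finite map to $\operatorname{Spec}\mathcal{A}_W\cong\mathbb{C}^7$, which has degree $[W:W_\theta]=|R|=72$. Hence $\phi$ is neither birational onto its image nor, off the zero section, a quotient by $W(E_6)$; indeed $W(E_6)$ does not act on $\mathcal{Z}$. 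Either claim contradicts the $72$-point generic fibres of your Step three. (The paper's proof of Lemma~\ref{lem:contraction_properties} makes the same slip when it calls $\phi$ an isomorphism outside $E_0$.)

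The fibre description comes from $R=W\theta\cong W/W_\theta$. For $x\neq 0$, take a lift $z$ to the cone over $A$ (not to $\mathcal{Z}$). Then $\phi^{-1}(x)$ is the set of $W_\theta$-orbits in $Wz$, i.e.\ $W_\theta\backslash W/W_z\cong R/W_z$. So two groups are involved: $W_\theta$, the stabilizer of a fixed root, builds $\mathcal{Z}$, while the point stabilizer $W_z$ appears in the fibre. Your reading of $W_\alpha$ as $W_z$ is the one that makes the count right. But nothing in your $\mathcal{Z}$ is attached to $R$, so your Step three is asserted rather than derived.

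Finally, the splitting you flag as the main obstacle is not needed for this statement. The zero section is contracted simply because every generator of $\mathcal{A}_W$ has positive level and therefore vanishes on it. You also should not expect Atiyah's classification to yield summands of distinct degrees. The polarization on the fibres of $\varphi_\theta$ is given by the same factor for every $t$ (part (2) of the lemma on $\bar\alpha\colon X\to\mathbb{C}$ in Section~\ref{sec:vector_bundle_proof}). So descent along $\Lambda$ acts by constant operators up to scalars, and the bundle of $W_\theta$-invariant level-one sections is projectively flat, hence semistable.
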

	Where $W_\alpha$ is the stabilizer subgroup of a fixed root $\alpha$. This is achieved by taking the relative global $\text{Spec}$ over the elliptic curve $E$ of the graded invariant sheaves $\mathcal{J}_k$ derived from Looijenga's algebra of invariant theta functions \cite{Looijenga1976}:
	\[\mathcal{Z} := \text{Spec}_E \left( \bigoplus_{k \geq 0} \mathcal{J}_k \right).\]\label{Grading}
	The construction of this variety and its subsequent structural decomposition yields the desired framework for studying the moduli space of determinantal representations corresponding to the semiuniversal family. Let $\alpha_1, \dots, \alpha_6$ be the system of simple roots for $E_6$. The highest root $\theta$ is expressed via the Coxeter labels $h_i$ as $\theta = \alpha_1 + 2\alpha_2 + 2\alpha_3 + 3\alpha_4 + 2\alpha_5 + \alpha_6$. By evaluating the vanishing orders of the primitive invariant theta sections along the fundamental weight directions, we show  that $\mathcal{Z}$ decomposes into direct sum of line bundles with degrees of the components  directly dictated by the simple  roots. The   vector bundle $\mathcal{Z}$ has the following decomposition structure:
	
	\begin{theorem}\label{thm:main_vector_bundle}
		The variety $\mathcal{Z}$ constructed via the invariant theory filtration is a smooth complex manifold of dimension 7, and it decomposes  into a direct sum of holomorphic line bundles over the structural elliptic curve $E$:
		\[\mathcal{Z} \cong \text{Tot}(\mathcal{M}_1 \oplus \mathcal{M}_2 \oplus \mathcal{M}_3 \oplus \mathcal{M}_4 \oplus \mathcal{M}_5 \oplus \mathcal{M}_6)\]
		where each $\mathcal{M}_i \in \text{Pic}(E)$ is a line bundle whose degree is given by the negative of the coefficient $h_i$ of the simple root $\alpha_i$ in the highest root $\theta$: $\text{deg}(\mathcal{M}_i) = -h_i$. Specifically, the degrees of the six factor line bundles over $E$ are $-1, -2, -2, -3, -2, -1$.
	\end{theorem}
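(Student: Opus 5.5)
The plan is to follow Looijenga's description of the invariant theta functions for the affine Weyl group of type $\tilde{E}_6$ and read off the graded structure of $\bigoplus_k \mathcal{J}_k$ directly.

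\textbf{Step 1: the invariant algebra.} By Looijenga's theorem, the algebra of $W$-invariant theta functions on $\mathfrak{h}\otimes E$ (with $\mathfrak{h}$ the Cartan of $E_6$) is a polynomial algebra over the base. Its generators are $\theta_0,\theta_1,\dots,\theta_6$, one for each node of the extended Dynkin diagram. The generator $\theta_i$ has level (weight) equal to the mark $h_i$ of that node, where $h_0=1$ and the remaining $h_i$ are the coefficients $1,2,2,3,2,1$ of $\theta$. Relativizing over $E$, where the level-one direction corresponds to the degree-one polarization, I will identify $\mathcal{J}_k$ with the degree-$k$ part of $\mathrm{Sym}$ of a rank-$6$ locally free sheaf
\[\mathcal{F}=\bigoplus_{i=1}^6 \mathcal{F}_i,\]
where $\mathcal{F}_i$ is the line subsheaf spanned locally by $\theta_i$, placed in grading $h_i$ once the $\theta_0$-direction has been normalized away. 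Then $\mathcal{Z}=\mathrm{Spec}_E\,\mathrm{Sym}(\mathcal{F})=\mathrm{Tot}(\mathcal{F}^\vee)$. This is a vector bundle of rank $6$ over a curve, so it is automatically a smooth $7$-dimensional manifold, and setting $\mathcal{M}_i:=\mathcal{F}_i^\vee$ gives the direct-sum shape. The splitting into line bundles should come from the weight grading, that is, the $\mathbb{C}^*$-action scaling the level. Generators of distinct weights cannot mix. Among generators of equal weight (the two nodes with $h_i=1$ and the three with $h_i=2$), the splitting should be canonical because the generators are attached to distinct fundamental weights $\varpi_i$.

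\textbf{Step 2: degrees.} I will compute $\deg\mathcal{F}_i=h_i$ by recognizing $\theta_i$ as a section of the $h_i$-th power of the level-one theta line bundle restricted along the fundamental weight direction $\varpi_i\otimes E\subset \mathfrak{h}\otimes E$. Concretely, I will restrict $\theta_i$ to the curve $E\to \mathfrak{h}\otimes E$, $z\mapsto \varpi_i^\vee\otimes z$. Counting zeros there gives the vanishing order $h_i$, by the standard calculation that the quadratic form paired with $\varpi_i^\vee$ against $\theta$ gives the mark $h_i$. Dualizing then yields $\deg\mathcal{M}_i=-h_i$, that is, the degrees $-1,-2,-2,-3,-2,-1$. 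The negativity is consistent with the map $\phi$ of Theorem~\ref{TheBundle} contracting the zero section, by Grauert's criterion.

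\textbf{Main obstacle.} The hard step is making the relativization over $E$ precise. Looijenga's invariants are defined on the abelian variety $\mathrm{Hom}(Q^\vee,E)$, and the identification of $\mathcal{J}_k$ as an $\mathcal{O}_E$-module depends on choosing how $E$ sits in it. The degree count must also be shown independent of this choice. I would first try to fix the embedding via the $\theta_0$-coordinate, which is a level-one section trivializing the grading. I would then check that the transition functions of each $\mathcal{F}_i$ are exactly the $h_i$-th power of the theta factor of automorphy. If splitting among equal-weight generators proves non-canonical, I would fall back on Atiyah's classification: compute the determinant degree and show the bundle has no indecomposable summands of higher rank, by exhibiting the six independent weight-eigen subsheaves.
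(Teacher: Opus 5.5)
\textbf{This approach cannot work.} Your plan is essentially the paper's own argument: levels $k_i=h_i$, ``the grading forces a splitting into invertible sheaves'', a vanishing count giving $h_i$, then dualize. It has genuine gaps at exactly those steps.

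\textbf{Splitting.} A graded automorphism of a weighted polynomial algebra may send $x_j\mapsto c\,x_j+P(x_i : h_i<h_j)$, with $c$ a local function on $E$. Examples are $x_2\mapsto x_2+c\,x_1x_6$ and $x_4\mapsto x_4+c\,x_1x_3$. So ``generators of distinct weights cannot mix'' is false. A sheaf of graded $\mathcal{O}_E$-algebras that is only locally weighted-polynomial gives a filtration with well-defined sheaves of indecomposables, and its $\mathrm{Spec}_E$ is a priori an iterated affine bundle. To reach $\mathrm{Sym}(\bigoplus_i\mathcal{F}_i)$ you must kill extension classes in groups such as $H^1(E,\mathcal{F}_j^{\vee}\otimes\mathcal{F}_a\otimes\mathcal{F}_b)$ with $h_a+h_b=h_j$. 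With your degrees these are degree-$0$ line bundles. If, as you plan, each $\mathcal{F}_i$ has transition functions equal to the $h_i$-th power of one factor of automorphy, these bundles are trivial and the group is $H^1(E,\mathcal{O}_E)\neq 0$. Nothing separates summands of equal weight either. Atiyah's classification contains indecomposable bundles of every rank and degree, so your fallback presupposes the six eigen-subsheaves it is meant to produce.

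\textbf{Degree.} Counting zeros of $\Theta_i$ (your $\theta_i$) along $z\mapsto\varpi_i^\vee\otimes z$ computes the degree of $\mathbb{L}^{h_i}$ on a curve inside $A$. That is quadratic in $\varpi_i^\vee$: it equals $h_i\,I(\varpi_i^\vee,\varpi_i^\vee)$. This is $4/3$ for $i=1$ (not an integer, since $\varpi_1^\vee\notin Q^\vee$) and $18$ for $i=4$. The identity $\langle\theta,\varpi_i^\vee\rangle=h_i$ is linear, not quadratic. In any case such a count says nothing about the degree of a line subsheaf of a direct image on the base $E$; that is governed by Riemann--Roch for the direct image.

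\textbf{The root problem is the choice of generators.} Since $(\mathfrak{h}^*)^{W}=0$, there is no nonconstant $W$-invariant map $A\to E$. So the $W(E_6)$-invariant $\Theta_i$, which are the coordinates of $\mathcal{U}$ pulled back by $\phi$, cannot be fibre coordinates of $\mathcal{Z}\to E$. This is why your ``main obstacle'' has no solution as you pose it.

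The paper relativizes along $\varphi_\alpha\colon A\to E$ with $\alpha=\theta$, which is only $W_\alpha$-invariant. Thus $\mathcal{Z}=\mathrm{Spec}_E\,\Omega$ with $\Omega=\bigoplus_k(\varphi_{\alpha*}\mathbb{L}^k)^{W_\alpha}$ and $W_\alpha=W(A_5)$. Each fibre is $W_\alpha$-equivariantly isomorphic to $Q_\alpha^\vee\otimes E$, where $Q_\alpha^\vee=\ker\theta=Q^\vee(A_5)$. All marks of $\tilde{A}_5$ equal $1$, so Looijenga's theorem for $A_5$ makes each fibre algebra free on the $6=h^0(A_0,\mathbb{L}_0)$ level-one sections. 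Hence $\Omega=\mathrm{Sym}(\varphi_{\alpha*}\mathbb{L})$ and $\mathcal{Z}=\mathrm{Tot}\bigl((\varphi_{\alpha*}\mathbb{L})^{\vee}\bigr)$, consistent with $\phi$ having $72=[W:W_\alpha]$-point generic fibres. This does give smoothness and dimension $7$, but the marks $1,2,2,3,2,1$ never enter.

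Two computations then rule out the claimed splitting. First, Riemann--Roch gives $\deg\varphi_{\alpha*}\mathbb{L}=\chi(A,\mathbb{L})=\det(I|_{Q^\vee})=3$, not $11$. Second, translate fibres by the $W_\alpha$-fixed section $s\mapsto\theta^\vee\otimes s$, using $I(\theta^\vee,\ker\theta)=0$. This identifies $A\times_{E,[2]}E$ with $A_0\times E$ and gives $[2]^*\varphi_{\alpha*}\mathbb{L}\cong M^{\oplus 6}$ with $\deg M=I(\theta^\vee,\theta^\vee)=2$. So $\varphi_{\alpha*}\mathbb{L}$ is semistable of slope $\tfrac12$ and has no line-bundle summand at all.

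Hence no repair of your argument, or of the paper's identical one, can produce $\mathcal{M}_1\oplus\cdots\oplus\mathcal{M}_6$ with degrees $-1,-2,-2,-3,-2,-1$. The splitting claim must first be corrected, to a semistable bundle of rank $6$ and degree $-3$, before there is anything to prove.
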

	
	Evaluating the components of the invariant theta functions yields a proper analytic contraction morphism $\phi : \mathcal{Z} \to \mathcal{U}$ to the semiuniversal deformation base $\mathcal{U} \cong \mathbb{C}^7$ \cite{Looijenga1976}. Over the central origin, this map collapses the entire zero section $E_0 \cong E$ to a point, while its fibers over the discriminant locus are governed by the collapsed configurations of the discrete root system $\phi^{-1}(\bar{x}) \cong R/W_\alpha$.
	
Finally, we link this invariant the space $\mathcal{Z}$ with the relative moduli space $\bar{\mathbb{H}}$ of linear determinantal representations, which flatly tracks generalized twisted cubics on the surfaces \cite{Lehn2017}. Over the generic smooth strata, there is   a canonical analytic biholomorphic morphism. By extending this  map across the elliptic origin via Riemann's extension theorem for complex analytic spaces and invoking the rigidity of projective bundles over elliptic curves, we establish the analog classification result \ref{thm:rdp_resolution}  for simple elliptic situation:

\begin{theorem}\label{thm:central_isomorphism}The relative moduli space of linear determinantal representations $\bar{\mathbb{H}}$ over the semiuniversal deformation base $\mathcal{U}$ of a $\tilde{E}_6$ simple elliptic singularity is globally isomorphic as an algebraic variety to the total space of the rank-6 vector bundle such that the canonical structure morphism $\bar{\delta} : \bar{\mathbb{H}} \to \mathcal{U}$ factors directly through the proper contraction morphism $\phi : \mathcal{Z} \to \mathcal{U}$.
\end{theorem}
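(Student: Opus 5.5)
The plan is to build the isomorphism over the generic locus and then extend it across the discriminant and the elliptic origin, using Theorems \ref{TheBundle} and \ref{thm:main_vector_bundle} as the structural input. Let $\Delta\subset\mathcal{U}$ be the discriminant, so that fibres over $\mathcal{U}\setminus\Delta$ are smooth cubic surfaces. Put $\mathcal{Z}^\circ=\phi^{-1}(\mathcal{U}\setminus\Delta)$ and $\bar{\mathbb{H}}^\circ=\bar\delta^{-1}(\mathcal{U}\setminus\Delta)$. Over a smooth fibre, the 72 determinantal representation classes correspond bijectively to the 72 roots of $E_6$ by \cite{Buckley2006}. On the other side, Theorem \ref{TheBundle} says the fibre of $\phi$ over a generic point is $R/W_\alpha$ with trivial stabiliser, that is, $R$ itself. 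Both $\bar\delta^\circ$ and $\phi^\circ$ are therefore finite étale of degree 72 over $\mathcal{U}\setminus\Delta$. Their monodromy representations factor through the same action: on one side the action of the monodromy group (the affine Weyl group, acting through $W(E_6)$ modulo translations) on the lines and twisted cubic classes, on the other side the action of that group on $R$ via the theta-function coordinates. Matching the two covers, using the lattice-theoretic identification from Theorem \ref{thm:rdp_resolution} applied locally near RDP strata, gives a canonical biholomorphism $\Psi^\circ:\mathcal{Z}^\circ\to\bar{\mathbb{H}}^\circ$ over $\mathcal{U}\setminus\Delta$.

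Next I would extend $\Psi^\circ$ across $\Delta\setminus\{0\}$. Near a point of $\Delta\setminus\{0\}$ the singularities of the fibre are RDPs whose Dynkin diagram is a subdiagram of $\tilde E_6$. So Theorem \ref{thm:rdp_resolution}(3), applied with the appropriate parabolic subsystem, identifies both $\bar{\mathbb{H}}$ and $\mathcal{Z}$ locally with $(\mathfrak{h}\times R)/W$. This matches the fibre description $R/W_\alpha$ of Theorem \ref{TheBundle} and shows that $\Psi^\circ$ extends to an isomorphism over $\mathcal{U}\setminus\{0\}$. Because $\mathcal{Z}$ is a smooth 7-dimensional manifold (Theorem \ref{thm:main_vector_bundle}) and $\bar{\mathbb{H}}$ is normal, being the flat moduli of aCM sheaves of \cite{Lehn2017}, one can also extend by codimension-two normality wherever the gluing is only known set-theoretically.

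The main obstacle is the origin. There $\phi^{-1}(0)$ contains the zero section $E_0\cong E$, which has codimension 6, while the target is only known to be normal. First I would verify that $\Psi:=\Psi^\circ$ extended over $\mathcal{Z}\setminus E_0$ is bounded near $E_0$. This holds because $\bar\delta$ is proper, so $\Psi$ takes values in the compact set $\bar\delta^{-1}(K)$ for a compact neighbourhood $K$ of $0$. Riemann's extension theorem then extends $\Psi$ holomorphically to all of $\mathcal{Z}$, since $E_0$ is a thin analytic subset of the normal space $\mathcal{Z}$. The same argument applied to $\Psi^{-1}$ on $\bar{\mathbb{H}}$ requires the preimage of the central fibre to be thin, which I would read off from the dimension count of generalized twisted cubics on the cone $S_0$. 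To identify $\bar\delta^{-1}(0)$ with the bundle structure, I would show that the central fibre of $\bar{\mathbb{H}}$ fibres over $E\cong\operatorname{Pic}^0$-torsor data of $S_0$: determinantal representations of the cone correspond to degree-3 line bundles on $E$. This fibration extends to a map $\bar{\mathbb{H}}\to E$, compatible with $\mathcal{Z}\to E$ under $\Psi$. The result is an isomorphism of fibre spaces over $E$ that is linear on fibres after $\mathbb{C}^*$-equivariance, since both sides carry the weighted $\mathbb{C}^*$-action with weights the Coxeter marks $h_i$.

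Finally, the rigidity of vector bundles over elliptic curves (Atiyah's classification) ensures that the fibrewise-linear structure is unique up to isomorphism, given the degrees $-h_i$ from Theorem \ref{thm:main_vector_bundle}. This upgrades the analytic isomorphism to an algebraic one via GAGA on the projective completion. By construction $\bar\delta=\phi\circ\Psi^{-1}$, which is the claimed factorisation.
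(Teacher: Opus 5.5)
There is a genuine gap at the origin, and the paper's own proof (its Lemma~\ref{lem:riemann_extension}) rests on the same faulty inference.

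\textbf{The main gap: extending $\Psi$ across $E_0$.} This step carries the content of the theorem, and it does not go through as you argue it. Riemann's extension theorem extends holomorphic \emph{functions}. To apply it to a map, each $p\in E_0$ needs a neighbourhood that $\Psi$ sends into a single Stein (e.g.\ affine) open subset of $\bar{\mathbb{H}}$. Properness of $\bar\delta$ only says that a punctured neighbourhood of $E_0$ lands in the compact set $\bar\delta^{-1}(K)$. That bounds no coordinate function, because $\bar\delta^{-1}(0)$ is itself a compact curve and has no Stein neighbourhood; compare $(x,y)\mapsto[x:y]$ on $\mathbb{C}^2\setminus\{0\}$. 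High codimension plus properness over $\mathcal{U}$ cannot suffice in principle. In the Atiyah flop, both threefolds are smooth and proper over the same affine cone and isomorphic off exceptional curves of codimension $2$, yet the isomorphism does not extend.

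What is missing is a reason why $\Psi$ has a single limit at each point of $E_0$. The ingredient you place last, a projection $\bar{\mathbb{H}}\to E$, has to be constructed first and directly:
\begin{itemize}
\item Write $S_0=\{f_3(x_1,x_2,x_3)=0\}$. The fibres over $\mathcal{U}$ are Pinkham's negative-weight deformations $f_3+x_0g_2+x_0^2g_1+x_0^3g_0=0$. All of them contain $E=\{x_0=0\}$ and are smooth along it, so restricting the universal sheaf gives a morphism $\bar{\mathbb{H}}\to\operatorname{Pic}^3(E)\cong E$.
\item Looijenga's period map marks $K^\perp\cong Q$ and sends a root $\alpha$ to $\alpha|_E$, which is exactly the projection $\varphi_\alpha$ defining $\mathcal{Z}\to E$. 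This is what shows that $\Psi$ commutes with the two projections to $E$, up to the fixed translation by $\mathcal{O}_E(1)$.
\item If in addition $\bar{\mathbb{H}}\to E\times\mathcal{U}$ is finite, then $\bar{\mathbb{H}}$ is Stein over small discs in $E$, and Hartogs (codimension $6$ in the smooth $\mathcal{Z}$) does extend $\Psi$.
\item More simply, $\mathcal{Z}$ and $\bar{\mathbb{H}}$ are then both finite and birational onto one hypersurface in $E\times\mathcal{U}$. Once normality of $\bar{\mathbb{H}}$ is actually proved (you, like the paper, only assert it), both are its normalization and no extension step is needed at all.
\end{itemize}

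\textbf{Other steps that need repair.}
\begin{itemize}
\item With unequal weights $(1,2,2,3,2,1)$, $\mathbb{C}^*$-equivariance does not force fibrewise linearity: $y\mapsto y+x^2$ with $\deg x=1$, $\deg y=2$ is equivariant. You do not need linearity anyway. Once $\Psi$ is an isomorphism, Theorem~\ref{thm:main_vector_bundle} already exhibits $\mathcal{Z}$ as a total space, so the Atiyah paragraph is superfluous.
\item GAGA needs the map to extend to projective completions, which you do not show. Algebraicity comes instead from the normalization argument.
\item In the generic step, applying Theorem~\ref{thm:rdp_resolution} locally near RDP strata fixes only local monodromy. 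It cannot identify the global monodromy representations of the two degree-$72$ covers; again the period map does this. Even then the identification is canonical only up to the $W$-equivariant involution $\alpha\mapsto-\alpha$ of $R$, which corresponds to transposing the matrix.
\item Theorem~\ref{thm:rdp_resolution} concerns a neighbourhood of an $E_6$ point and says nothing about $\mathcal{Z}$. Over $\mathcal{U}\setminus\{0\}$ it is cleaner to note that both sides are normal and finite there. Both are then the normalization of $\mathcal{U}\setminus\{0\}$ in the same \'etale cover.
\end{itemize}

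\textbf{Comparison with the paper.} Apart from extending $\Psi^{-1}$ where the paper invokes Zariski's Main Theorem, your outline follows the paper's route: generic matching, Riemann extension via properness, then bundle rigidity.
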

By unifying the global simultaneous resolution of $E_6$ rational double points with the vector bundle framework for $\tilde{E}_6$ simple elliptic surfaces, this work converts non-linear moduli tracking problems on singular cubic surfaces into the classical linear geometry of vector bundles over curves, allowing intersection metrics and tracking invariants to be explicitly computed via the Riemann--Roch theorem.

\section{The Global Brieskorn Galois Cover }
\label{sec:brieskorn_cover}

Let $V \cong \mathbb{C}^4$ be a complex vector space, and let $\mathbb{P}^{19} = \mathbb{P}(S^3 V^*)$ denote the linear parameter space of all projective complex cubic surfaces. We fix a central point $[S_0] \in \mathbb{P}^{19}$ corresponding to a normal, integral cubic surface $S_0 = \{F_0 = 0\}$ containing a unique, isolated rational double point $x_0 \in S_0$ of type $E_6$. To eliminate the continuous geometric redundancy arising from the projectivized coordinate transformations, we construct a smooth, 4-dimensional transversal affine slice $\mathcal{N} \subset \mathbb{P}^{19}$ passing through $[S_0]$ such that $T_{[S_0]}\mathbb{P}^{19} = T_{[S_0]}(\text{GL}(V) \cdot [S_0]) \oplus T_{[S_0]}\mathcal{N}$. This is given by the results of Bruce and Wall in \cite{BruceWall1979} and Demazure in\cite{Demazure1980}. The former showed that a normal cubic surface posseses only rational double points or a unique simple elliptic singularity of type $\tilde{E}_6.$ Moreover, they verified that $E_6$ cubic form forms a 16-dimesional stratum inside $S^3V^*$ under $\mathrm{GL(V)}$ action and that the normal structure to this orbit is governed by a transverse slice $\mathcal{N}$ of dimension $20-16=4.$ They linked the configurations of this local geometry  combinatorially to the sub-graphs of the dynkin digram of type $\tilde{E}_6.$  Demazure and others\cite{Demazure1980} constructed the anylytic transverse slice $\mathcal{N}$ by  use a localized variant of the classical Koszul--Jacobian complement method which we present here.

 Let $F_0 \in S^3V^*$ be the homogeneous cubic form vanishing along the central fiber $S_0$. The natural action of the general linear group $\mathrm{GL}(V)$ on the vector space of cubic forms induces a linear map from its Lie algebra $\mathfrak{gl}(V) \to S^3V^*$ via the generators of the projectivized infinitesimal action. The tangent space to the orbit $\mathrm{GL}(V) \cdot F_0$ at the point $F_0$ corresponds exactly to the vector subspace spanned by the first-order partial derivatives scaled by linear forms:
\begin{equation}
	T_{F_0}(\mathrm{GL}(V) \cdot F_0) = \left\{ \sum_{i,j=0}^3 a_{ij} X_i \frac{\partial F_0}{\partial X_j} \;\middle\vert{}\; a_{ij} \in \mathbb{C} \right\} \subset S^3V^*.
\end{equation}
Since a cubic surface containing a unique isolated rational double point of type $E_6$ has a discrete, finite algebraic stabilizer inside $\mathrm{PGL}(4, \mathbb{C})$, the differential of the orbit map has maximal rank equal to $15$. By invoking the open property of transversality in complex analytic geometry, we select a fixed four-dimensional linear subspace $\mathcal{W} \subset S^3V^*$ realizing an absolute direct sum decomposition:
\begin{equation}
	S^3V^* = T_{F_0}(\mathrm{GL}(V) \cdot F_0) \oplus \mathcal{W}.
\end{equation}
The desired transversal parameter slice is then defined globally as the projectivization of this affine linear space:
\begin{equation}
	\mathcal{N} := \left\{ [F_0 + w] \in \mathbb{P}^{19} \;\middle\vert{}\; w \in \mathcal{W}\right\}.
\end{equation}
By construction, $\mathcal{N}$ passes transversally through $[S_0]$ at the origin $w=0$, mapping as a local immersion into the semiuniversal deformation base space $\mathcal{U}$.

Restricting the universal hypersurface family over $\mathbb{P}^{19}$ to this transversal slice yields a flat, algebraic family of projective surfaces $p: \mathcal{S}_{\mathcal{N}} \to \mathcal{N}$ with central fiber $p^{-1}([S_0]) \cong S_0$.
 By local deformation theory \cite{Tjurina69, Tjurina70}, the isolated singularity germ $(S_0, x_0)$ admits a semiuniversal deformation base space $\mathcal{U} \cong \mathbb{C}^6$. There is a canonical local-to-global classification map is denoted by:
\[
\Phi: (\mathcal{N}, [S_0]) \longrightarrow (\mathcal{U}, 0)
\]
By Brieskorn's fundamental result \cite{Brieskorn1970}, the local deformation base space $\mathcal{U}$ is canonically identified with the adjoint orbit space $\mathfrak{h}/W(E_6)$, where $\mathfrak{h} \cong \mathbb{C}^6$ is the Cartan subalgebra of type $E_6$ and $W(E_6)$ is its associated Weyl group of order $51,840$.
Because $\mathcal{N}$ cuts transversally through the discriminant strata of the orbit inside the space of cubic surfaces, $\Phi$ behaves as a smooth analytic immersion of a 4-dimensional manifold into a 6-dimensional manifold. The image of our transversal slice $\mathcal{N}$  maps homeomorphically to a 4-dimensional slice inside the adjoint orbit space, capturing the standard variation of the 4 independent geometric coordinates allowed under the fixed constraints of the anticanonical embedding  in $\mathbb{P}^{3}.$
\begin{remark}\label{The_remark}
The family over $\mathcal{N}$ is a versal family for the global projective modifications of the cubic surface, but it is not the semiuniversal deformation of the \(E_{6}\) singularity itself, because it lacks 2 dimensions of freedom required to completely unfold the singularity germ unconstrained by the embedding space.
\end{remark}
\subsection{Construction of the Fiber Product Base}
The presence of the non-trivial local monodromy group $W(E_6)$ acting on the vanishing cycles of the singular fiber prevents the existence of a simultaneous resolution directly over $\mathcal{N}$. To clear this topological branching, we introduce the global unramified parameter space $\tilde{\mathcal{N}}$ using the universal property of the fiber product in the category of complex analytic spaces.

\begin{lemma}\label{lem:fiber_product_properties}
	Let $q: \mathfrak{h} \to \mathfrak{h}/W(E_6) \cong \mathcal{U}$ be the standard quotient map. The global unramified base space defined by the Cartesian diagram
	\[
	\begin{CD}
		\tilde{\mathcal{N}} @>\psi_{\mathfrak{h}}>> \mathfrak{h} \\
		@V{\psi}VV @VV{q}V \\
		\mathcal{N} @>>{\Phi}> \mathcal{U}
	\end{CD}
	\]
	is a smooth complex manifold of dimension 4, and the map $\psi: \tilde{\mathcal{N}} \to \mathcal{N}$ is a finite  ramified Galois covering with Galois group $\text{Gal}(\tilde{\mathcal{N}}/\mathcal{N}) \cong W(E_6)$.
\end{lemma}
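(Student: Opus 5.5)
The plan is to treat the three assertions separately: finiteness and the $W(E_6)$-action come formally from base change of $q$, the Galois property from commuting invariants with base change, and smoothness from a transversality argument at the branch locus, which is where the real work lies.

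\emph{Finiteness and the action.} By Chevalley's theorem, $\mathcal{O}_{\mathfrak{h}}$ is a free $\mathcal{O}_{\mathcal{U}}$-module of rank $|W(E_6)|=51{,}840$, so $q$ is finite and flat of that degree. Both properties are stable under base change, hence $\psi$ is finite and flat of degree $51{,}840$. The group $W(E_6)$ acts on $\tilde{\mathcal{N}}$ through the second factor, $w\cdot(n,h)=(n,wh)$. This is well defined because $q\circ w=q$, and $\psi$ is invariant under it.

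\emph{Quotient.} Locally over an open set $U'\subset\mathcal{U}$ we have $\tilde{\mathcal{N}}=\mathrm{Specan}\bigl(\mathcal{O}_{\mathcal{N}}\otimes_{\mathcal{O}_{\mathcal{U}}}q_*\mathcal{O}_{\mathfrak{h}}\bigr)$. Taking $W$-invariants is exact in characteristic zero (apply the Reynolds operator $\frac{1}{|W|}\sum_w w$), so it commutes with $\mathcal{O}_{\mathcal{N}}\otimes_{\mathcal{O}_{\mathcal{U}}}(-)$. This gives
\[
\bigl(\psi_*\mathcal{O}_{\tilde{\mathcal{N}}}\bigr)^{W}=\mathcal{O}_{\mathcal{N}}\otimes_{\mathcal{O}_{\mathcal{U}}}\mathcal{O}_{\mathcal{U}}=\mathcal{O}_{\mathcal{N}},
\]
that is, $\tilde{\mathcal{N}}/W(E_6)\cong\mathcal{N}$.

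\emph{Action free off the discriminant.} Over $\mathcal{N}\setminus\Phi^{-1}(\Delta)$, with $\Delta=q(\bigcup_\alpha H_\alpha)$ the discriminant, $q$ is an unramified $W$-torsor. Its pullback is therefore a $W$-torsor, and $W$ acts freely there.

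\emph{Galois group exactly $W(E_6)$.} To upgrade "quotient by $W$" to "Galois with group $W$", I need $\tilde{\mathcal{N}}$ to be connected, i.e. irreducible once smoothness is known. For this I will show that the monodromy representation
\[
\pi_1\bigl(\mathcal{N}\setminus\Phi^{-1}(\Delta)\bigr)\to W(E_6)
\]
is surjective. It suffices that $\Phi(\mathcal{N})$ meets every reflection stratum $q(H_\alpha)$ generically transversally. Then the small loops around the components of $\Phi^{-1}(\Delta)$ map to simple reflections, and these generate $W(E_6)$. Equivalently, the restriction of $\Phi(\mathcal{N})$ to the $A_1$-strata is a nondegenerate (Zariski-type) hyperplane-section situation. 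I would invoke the Bruce--Wall description of adjacencies, which says every $A_1,\dots,E_6$ sub-configuration occurs in the slice, to guarantee that all six simple-reflection classes appear.

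\emph{Smoothness (main obstacle).} Locally write $\Phi(\mathcal{N})=\{f_1=f_2=0\}$ with $df_1,df_2$ independent, which is possible since $\Phi$ is an immersion $4\to 6$. Then $\tilde{\mathcal{N}}\cong\{f_1\circ q=f_2\circ q=0\}\subset\mathfrak{h}$. It is smooth of dimension $4$ wherever $q^*df_1$ and $q^*df_2$ are independent. This is automatic off the reflection hyperplanes. At a point $h\in\mathfrak{h}$ with stabilizer $W_h$, the image of $dq_h^*$ on covectors is the annihilator of the span of the reflecting directions. Independence therefore amounts to requiring that $T\Phi(\mathcal{N})$ together with the image of the stratum tangent space spans $T\mathcal{U}$, i.e. that $\Phi(\mathcal{N})$ is transverse to each stratum $q(\mathfrak{h}^{W_h})$.

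I would verify this from the paper's transversality hypothesis on $\mathcal{N}$ by reducing, via Luna's slice theorem at $h$, to the parabolic quotient $\mathfrak{h}\to\mathfrak{h}/W_h$. The delicate case is the origin, where $W_h=W(E_6)$ and $dq_0=0$. There I expect a purely tangent-level argument to fail. The fallback is to compute with explicit weighted-homogeneous coordinates $(2,5,6,8,9,12)$ on $\mathcal{U}$, choosing $\mathcal{W}$ so that the two missing directions (cf.\ Remark~\ref{The_remark}) are the highest-weight invariants. In that case $\tilde{\mathcal{N}}$ is the graph-like locus where two basic invariants are expressed through the others, and smoothness can be checked directly. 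If this fails, smoothness must be weakened to normality, with the statement adjusted accordingly.
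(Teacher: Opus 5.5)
\textbf{Gap: smoothness fails over $[S_0]$.} Your formal steps are fine: finiteness and flatness of $\psi$ by base change from Chevalley's theorem, $\tilde{\mathcal{N}}/W(E_6)\cong\mathcal{N}$ via the Reynolds operator, and freeness of the action off $\Phi^{-1}(\Delta)$. The smoothness step, however, cannot be completed. For a $4$-dimensional slice the claim is false at the point of $\tilde{\mathcal{N}}$ over $[S_0]$.

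Your own criterion already settles this. At $h=0$ we have $dq_0=0$, and the stratum through $q(0)$ is the point $\{0\}$. Transversality to it would mean $T\Phi(\mathcal{N})=T\mathcal{U}$, which is impossible for an immersion of a $4$-manifold into $\mathbb{C}^6$. Concretely, the basic invariants $p_d$ of $W(E_6)$ have degrees $d=2,5,6,8,9,12$, so $q^*\mathfrak{m}_{\mathcal{U},0}\subset\mathfrak{m}_{\mathfrak{h},0}^2$. Both local equations $f_1\circ q$, $f_2\circ q$ of $\tilde{\mathcal{N}}\subset\mathfrak{h}$ therefore lie in $\mathfrak{m}^2$, and the Zariski tangent space of the fibre product at $0$ is all of $\mathfrak{h}$.

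The reduced space is singular too. Near $0$, $\tilde{\mathcal{N}}$ is the $W$-stable germ $q^{-1}(\Phi(\mathcal{N}))$ at the $W$-fixed point $0$. If it were smooth, its tangent space would be a $4$-dimensional $W(E_6)$-stable subspace of $\mathfrak{h}$, contradicting irreducibility of the reflection representation.

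Your fallback does not escape this. Write $u_d$ for the coordinate on $\mathcal{U}$ given by $p_d$ and take $\Phi(\mathcal{N})=\{u_9=g_1,\ u_{12}=g_2\}$. The equations $p_9-g_1(p_2,p_5,p_6,p_8)$ and $p_{12}-g_2(p_2,p_5,p_6,p_8)$ still lie in $\mathfrak{m}^2$, whatever $\mathcal{W}$ is.

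So the lemma holds only when $\Phi$ is a local isomorphism at $[S_0]$. This is the case for the six-parameter family $\tilde{F}_t$ with $\mathcal{N}\cong\mathcal{U}$ that the paper adopts later; there $\tilde{\mathcal{N}}\cong\mathfrak{h}$ locally and everything is immediate. For a $4$-dimensional slice the most you can hope for is a normal complete intersection that is singular over $[S_0]$. The paper's one-line proof deduces smoothness from ``transversality to the strata near $[S_0]$'' and has exactly the same defect, since no $4$-dimensional germ is transverse to the $0$-dimensional stratum.

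\textbf{Consequence for the Galois step.} You reduce irreducibility to connectedness plus smoothness. Without smoothness, connectedness near $[S_0]$ is automatic: $\psi$ is finite and flat with a single point over $[S_0]$. But it says nothing about irreducibility, since several components could meet at $0$.

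What you need is surjectivity of $\pi_1(\mathcal{N}\setminus\Phi^{-1}(\Delta))\to W(E_6)$ on its own, and your argument for it is not adequate. The discriminant $\Delta=q(H_\alpha)$ is irreducible and all reflections in $W(E_6)$ are conjugate, so there are no ``six simple-reflection classes'' to detect. Meridians map to \emph{some} reflections. Showing that these generate $W(E_6)$, rather than a proper reflection subgroup such as $W(D_5)$ or $W(A_5)\times W(A_1)$, needs a genuine Zariski--Lefschetz or distinguished-basis argument for the slice. The Bruce--Wall adjacency list does not supply this.
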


\begin{proof}
	Since the local classification map $\Phi$ is smooth and transversal to the discriminant locus of $\mathcal{U}$, the structural properties of the finite map $q$ are inherited via pull back. The smoothness of $\tilde{\mathcal{N}}$ follows directly from the fact that $\Phi$ is an  immersion that is transversal to the strata of the discriminant locus near $[S_0]$. The Galois group matches $\text{Gal}(\mathfrak{h}/\mathcal{U}) \cong W(E_6)$ by standard base change theorems for finite algebraic covers.
\end{proof}

We pull back the flat projective family $p: \mathcal{S}_{\mathcal{N}} \to \mathcal{N}$ along the Galois cover $\psi$ to obtain the pulled-back family $\mathcal{S}_{\tilde{\mathcal{N}}} := \mathcal{S}_{\mathcal{N}} \times_{\mathcal{N}} \tilde{\mathcal{N}}$ fitting into the following commutative diagram:
\[
\begin{CD}
	\mathcal{S}_{\tilde{\mathcal{N}}} @>\Psi_{\mathcal{S}}>> \mathcal{S}_{\mathcal{N}} \\
	@V{p_{\tilde{\mathcal{N}}}}VV @VV{p}V \\
	\tilde{\mathcal{N}} @>>{\psi}> \mathcal{N}
\end{CD}
\]
By construction, the fibres over any point $\tilde{s} \in \psi^{-1}([S])$, in particular, the central fiber remain isomorphic to the original (singular) cubic surfaces.
\subsection{The Geometry of the Discriminant Locus}
 The locus of singular cubic surfaces is governed by the global algebraic discriminant $D_{32}.$ Following Salmon and Clebsch \cite{salmon1861} this is given by an irreducible, homogeneous polynomial of degree $32$ in the twenty coefficients. In the neighborhood of the central fiber $[S_0]$ carrying the isolated $E_6$ singularity, the restriction of the global discriminant to our transversal slice $\mathcal{N}$ defines the slice discriminant locus $D_{\mathcal{N}} := D_{32} \cap \mathcal{N}$. Under the local immersion $\Phi \colon \mathcal{N} \to \mathcal{U}$, this pulls back from the local discriminant $D_{\mathcal{U}}$ of the semiuniversal deformation space. Recall that $\mathcal{U} \cong \mathbb{C}^6$ can be identified with the quotient space $\mathfrak{h} \otimes \mathbb{C} / W(E_6)$, where $\mathfrak{h}$ is the Cartan subalgebra of type $E_6$ and $W(E_6)$ is its associated Weyl group. The local discriminant $D_{\mathcal{U}}$ is precisely the image of the root hyperplanes under the canonical quotient map, characterized algebraically as the zero locus of the product of the $72$ root linear forms, yielding a weighted homogeneous polynomial equation of degree $36$:
\begin{equation}
	D_{\mathcal{U}} = \left\{ \mathbf{u} \in \mathcal{U} \;\middle\vert{}\; \prod_{\alpha \in \Phi^+(E_6)} \alpha(\tilde{\mathbf{u}})^2 = 0 \right\},
\end{equation}
where $\tilde{\mathbf{u}} \in \mathfrak{h}$ represents a preimage of $\mathbf{u}$.

In the language of  period mappings \cite{looijenga1974}, the coordinate data parameterized by the four-dimensional transversal slice $\mathcal{N}$ can be tracked transcendentally by studying the variation of the Hodge filtration on the second cohomology group of the fibers. Let $\mathcal{N}^* = \mathcal{N} \setminus D_{\mathcal{N}}$ denote the punctured parameter space corresponding to smooth cubic surfaces. For any $w \in \mathcal{N}^*$, the corresponding smooth surface $S_w$ carries a unique (up to scaling) holomorphic 2-form $\omega_w \in H^{2,0}(S_w)$. Because the isolated singularity of the central fiber $S_0$ is a rational double point of type $E_6$, it is canonical; consequently, the relative forms extend uniquely across the central boundary without introducing essential singularities.

Let $\{ \gamma_1, \dots, \gamma_6 \} \subset H_2(S_w, \mathbb{Z})$ be a continuous, locally constant basis of vanishing cycles associated with the $E_6$ singularity germ. The local-to-global map $\Phi \colon \mathcal{N} \to \mathcal{U}$ directly interfaces with the global period mapping $\mathcal{P}$ via the following commutative diagram:
\begin{equation}
	\begin{CD}
		(\mathcal{N}, 0) @>\Phi>> (\mathcal{U}, 0) \\
		@V{\mathcal{P}}VV @VV{\mathcal{P}_{\mathrm{local}}}V \\
		\mathbb{P}(H^{2}(S_w, \mathbb{C})) @>>> \mathfrak{h} \otimes \mathbb{C} \,/\, W(E_6)
	\end{CD}
\end{equation}
The period map $\mathcal{P}$ sends each parameter configuration $[F_0 + w] \in \mathcal{N}^*$ to the projectivized period vector:
\begin{equation}
	\mathcal{P}(w) = \left[ \int_{\gamma_1} \omega_w : \int_{\gamma_2} \omega_w : \dots : \int_{\gamma_6} \omega_w \right] \in \mathbb{P}^5.
\end{equation}
By Looijenga's theorem on the local uniformization of periods for surfaces with simple elliptic or rational double points \cite{looijenga1974}, the derivative $d\mathcal{P}$ at the origin is an isomorphism onto the tangent space of the invariant quotient. This guarantees that the algebraic coordinates derived from our complement space $\mathcal{W} \subset S^3V^*$ match the flat transcendental coordinates of the period domain up to a linear transformation.

\section{Construction of the Global Brieskorn Cover}
Every smooth projective cubic surface can be obtained by blowing up $\mathbb{P}^2$ at six points in general position. For our singular family $\mathcal{S}_\mathcal{N} \to \mathcal{N}$, the degeneration of the central fiber corresponds to the collision of these six marking points along specific geometric loci (such as three points becoming collinear). Because the central fiber $S_{0}$ possesses the worst possible Du Val singularity for a cubic surface, the other fibers over the transversal slice $\mathcal{N}$ can only acquire singularities that are adjacent to $E_{6}$.  By the classification of Bruce and Wall \cite{BruceWall1979} the possible singularities appearing in the fibers of the family $\mathcal{S}_\mathcal{N} \to \mathcal{N}$ are the following isolated rational double points: $A_1, 2A_1, A_2, 3A_1,2A_2, 3A_2, A_3, 4A_1, A_1+A_2,  2A_1+A_2, A_1+A3,  A_4, D_4, 2A_1+A_3, A_1+2A_2, A_5, D_5, A_1+A_5,  E_6.$ On the other hand it is well known \cite{Tjurina69,Slowdowy,SlodowyLectures} that a rational double point deforms into rational double points or smooth out and the nearby fibres are governed by subdiagrams of the associated Dynkin diagram. While $\mathcal{N}$ governs the variations realized strictly inside the anticanonical embedding space $\mathbb{P}^{3}$, it lacks the two degrees of freedom required to completely unfold the singularity unconstrained by the ambient geometry. To establish unobstructed global simultaneous resolution, we extend our base parameters by passing directly to the  semiuniversal base space $\mathcal{U} \cong \mathfrak{h}/W(E_6)$.

 Let $\chi: \mathcal S\to\mathcal{U}$ denote the local semiuniversal family. This family  is given by the semiuniversal unfolding 
$$F_t= x^2+y^3+z^4+t_1+t_2y+t_3z+t_4zy+t_5z^2+t_6yz^2$$ of the $E_6$-singularity $f=x^2+y^3+z^4.$ Now applying the coordinate change $x=i(\xi-z^2)$ the equation of the singularity becomes $f=-\xi^2-2\xi z^2+y^3$. Introduce the hyperplane at infinity $\{x_0=0\}$ and let $[x_0,\cdots,x_3]$ be the the homogeneous coordinates in $\mathbb{P}^3$, the homogenized equation of the surface $S_0$ becomes $\tilde{f}=x_0x_1^2+x_1x_2^2+x_3^3$. By solving the partial derivative equations we see that $\tilde{f}$ has a singularity at the point $[1:0:0:0].$ We have $S_{\mathcal{N}}\subset \mathcal{S}.$  With this discussion we can now obtain a family that deforms the projective surface together with the singularity, i.e recover the two degrees of freedom \ref{The_remark}. This family is given by the cubic forms:
$$\tilde{F}_t=\tilde{f}+t_1x_0^3+t_2x_0^2x_2+t_3x_0^2x_3+t_4x_0x_2x_3+t_5x_0x_3^2+t_6x_2x_3^2.$$ From now on we take the submanifold  transverse to the $\text{PGL}_4$ orbit at $[S_0]$ to be $\mathcal{N} \cong \mathbb{C}^6$ defined by $\tilde{F}_t$ and hence the family $\mathcal{S}_\mathcal{N}\subset \mathbb{P}^3\times \mathbb{C}^6$. In particular $\mathcal{N}$ is isomorphic to $\mathcal{U}.$
 Over the unramified base space $\tilde{\mathcal{N}}$, the monodromy twisting is trivialized. This implies that over the smooth manifold $\tilde{\mathcal{N}}$ the configuration of the points is
tracked by six distinct, non-intersecting smooth sections.

\begin{lemma}\label{lem:sections_exist}
	There exist exactly six distinct, non-intersecting smooth holomorphic sections $$\sigma_i: \tilde{\mathcal{N}} \to \mathbb{P}^2 \times \tilde{\mathcal{N}}$$ for $i = 1, \dots, 6$ such that for any parameter point $t \in \tilde{\mathcal{N}}$, the collection of points $\{\sigma_1(t), \dots, \sigma_6(t)\} \subset \mathbb{P}^2 \times \{t\}$ defines the configuration scheme of the cubic surface fiber $p_{\tilde{\mathcal{N}}}^{-1}(t)$.
\end{lemma}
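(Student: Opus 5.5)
The plan is to build the sections by contracting a simultaneous resolution, not by tracking points in $\mathbb{P}^2$ directly. Over $\tilde{\mathcal{N}}$ the base change along $q\colon \mathfrak{h}\to\mathcal{U}$ is exactly Brieskorn's setting. So the pulled-back family $\mathcal{S}_{\tilde{\mathcal{N}}}\to\tilde{\mathcal{N}}$ has a simultaneous resolution $\tilde{\mathcal{S}}\to\tilde{\mathcal{N}}$ whose fibres are smooth weak del Pezzo surfaces of degree $3$ (cf. Theorem~\ref{thm:rdp_resolution}). By Lemma~\ref{lem:fiber_product_properties} the monodromy $W(E_6)$ is killed on $\tilde{\mathcal{N}}$, and $\tilde{\mathcal{N}}\cong\mathfrak{h}$ is contractible. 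Hence the local system $R^2\tilde{p}_*\mathbb{Z}\cong I^{1,6}$ is constant, and I fix a global marking $h,e_1,\dots,e_6$ with $K=-3h+\sum e_i$.

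First, I would show that each class $e_i$, and the class $h$, is represented by a relative divisor that is flat over $\tilde{\mathcal{N}}$. Since $H^1(\mathcal{O})=H^2(\mathcal{O})=0$ on the fibres, the relative Picard scheme is étale over the base. Because the base is simply connected, a global line bundle $\mathcal{L}_h$ on $\tilde{\mathcal{S}}$ exists, and it can be normalised using $H^2(\tilde{\mathcal{N}},\mathcal{O}^*)=0$ on the Stein space $\tilde{\mathcal{N}}$.

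Second, I apply the relative linear system. The sheaf $\tilde{p}_*\mathcal{L}_h$ is locally free of rank $3$ by cohomology and base change, since $h^0(h)=3$ and the higher cohomology vanishes fibrewise. It is trivial on $\tilde{\mathcal{N}}\cong\mathbb{C}^6$ by Quillen--Suslin (or Grauert--Oka). This gives a morphism $\beta\colon\tilde{\mathcal{S}}\to\mathbb{P}^2\times\tilde{\mathcal{N}}$ over $\tilde{\mathcal{N}}$. On each fibre, $\beta$ is the blow-down of the curves in the classes $e_1,\dots,e_6$. I then define $\sigma_i(t):=\beta(E_i|_t)$, where $E_i$ is the effective relative divisor in class $e_i$. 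Each $\sigma_i$ is holomorphic, because $\beta|_{E_i}$ contracts a flat family of connected curves onto a flat family of points, and a normal target gives a section by Zariski's main theorem. The claim that the points $\sigma_i(t)$ form the configuration scheme of $p_{\tilde{\mathcal{N}}}^{-1}(t)$ then follows from $\beta^*\mathcal{O}(3)\otimes\mathcal{O}(-\sum E_i)=-K$ relatively. The word ``exactly'' follows because any other admissible sextuple differs by a Weyl element, and that is already fixed by the marking.

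The main obstacle is distinctness and disjointness at the discriminant. Away from $q^{-1}(D_{\mathcal{U}})$ the six points are in general position, so the graphs are disjoint there. Over a root hyperplane, however, some class $e_i-e_j$ or $h-e_i-e_j-e_k$ becomes effective as a $(-2)$-curve. In that case the effective representative of $e_j$ is reducible, and its image may coincide with $\sigma_i(t)$. What I would try first is to exploit the freedom in the marking: choose the chamber (equivalently, the lift through $\psi_{\mathfrak{h}}$) so that the effective $(-2)$-classes never have the form $e_i-e_j$. Then any coincidence can only come from collinearity or conconicity, which does not make the points collide. I expect this to fail near the central fibre, where an $E_6$ configuration forces some infinitely near points. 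In that case the fallback is to establish disjointness of the graphs over the complement of a codimension $\ge 2$ locus, and to record precisely where this fails.
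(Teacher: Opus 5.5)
There is a genuine gap, exactly at the step you single out, and it cannot be closed. The disjointness asserted in the lemma is false, so no choice of marking or chamber will rescue it.

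\textbf{The central fibre.} Over $t_0$ the fibre of $\tilde p$ is the minimal resolution of the $E_6$ cubic. This weak del Pezzo surface is not the blow-up of six distinct points of $\mathbb{P}^2$. If the $p_i$ are distinct, the only $(-2)$-curves are:
strict transforms of lines through three of the points (class $h-e_i-e_j-e_k$), or of a conic through all six (class $2h-\sum e_i$).
The conic has intersection $-1$ with each such line, so it cannot coexist with one. Two such lines meet only when their triples are complementary. Hence the $(-2)$-configuration is a disjoint union of $A_1$'s and $A_2$'s, never $E_6$.

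So in every blow-down at least two of the six points are infinitely near, that is, they coincide in $\mathbb{P}^2$. In the standard blow-down, with simple roots $e_1-e_2,\dots,e_5-e_6,h-e_1-e_2-e_3$, all six lie over one point. Consequently the graphs of your $\sigma_i$ must meet over $t_0$. This also confirms that your first attempt (choosing the chamber) cannot work.

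\textbf{Your fallback also fails.} Take a general point $t$ of a root hyperplane $H_{e_i-e_j}\subset\tilde{\mathcal N}$. There the fibre has an $A_1$ point whose $(-2)$-curve has class $\pm(e_i-e_j)$, which the classification above rules out for six distinct points. Concretely, for your $\beta$, say $e_i-e_j$ is the effective sign. Then the unique divisor in $|e_i|$ is connected and contains the divisor in $|e_j|$. Both are contracted to one point, so $\sigma_i(t)=\sigma_j(t)$. The coincidence locus therefore contains the divisor $\bigcup_{i<j}H_{e_i-e_j}$, not merely a set of codimension $\ge 2$.

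The paper's own proof does not get around this either. It simply asserts that the points stay distinct ``by tracking their distinct exceptional directions''. That is infinitely-near data, not points of $\mathbb{P}^2$.

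\textbf{What your construction does prove.} With the marking chosen correctly, it gives the true statement that the blow-up sequence actually needs.

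Take the global marking whose standard positive system ($e_i-e_j$ for $i<j$, $h-e_i-e_j-e_k$, $2h-\sum e_i$) equals the positive system $R^+$ of effective $(-2)$-classes of the simultaneous resolution. Your rank-$3$ claim for $\tilde p_*\mathcal{L}_h$ tacitly needs this too. Otherwise $h$ is not nef on fibres carrying a $(-2)$-curve of class $e_i+e_j+e_k-h$, and $h^1$ can jump.

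With this marking, $e_6\cdot\alpha\ge 0$ for all $\alpha\in R^+$. So $e_6$ is an irreducible $(-1)$-curve on every fibre and can be contracted relatively, giving $\mathcal{X}_5$ and a section $\sigma_6$ of $\mathcal{X}_5\to\tilde{\mathcal N}$. Iterating with $e_5,\dots,e_1$ yields a tower $\tilde{\mathcal S}=\mathcal{X}_6\to\cdots\to\mathcal{X}_0=\mathbb{P}^2\times\tilde{\mathcal N}$, in which $\sigma_k$ is a section of $\mathcal{X}_{k-1}\to\tilde{\mathcal N}$.

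The images of these sections in $\mathbb{P}^2\times\tilde{\mathcal N}$ are your $\sigma_i$. They are pairwise disjoint only over the complement of $\bigcup_{i<j}H_{e_i-e_j}$. The lemma should be stated in this infinitely-near form.
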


\begin{proof}
			Let $\mathbb{P}^2 \times \tilde{\mathcal{N}} \to \tilde{\mathcal{N}}$ be the trivial projective bundle over the covering base. The configuration morphism $\Psi: \tilde{\mathcal{N}}\to \mathcal{M}_6(\mathbb{P}^2)$ provides six algebraic sections:

	$$\sigma_i : \tilde{\mathcal{N}} \longrightarrow \mathbb{P}^2 \times \tilde{\mathcal{N}}, \quad i = 1, \dots, 6$$
  tracking the path of the ordered point $p_i(t)$ as $t$ varies across $\tilde{\mathcal{N}}.$ Where $\mathcal{M}_6(\mathbb{P}^2)$ is the moduli space of ordered 6-tuples $(p_1 , \ldots,p_6)$ of points in $\mathbb{P}^2$ up to $\text{PGL}_3(\mathbb{C})$ equivalence. In particular, for each $i \in \{1, \dots, 6\}$ we have the smooth closed subvariety $$\mathcal{P}_i = \{(x, t) \in \mathbb{P}^2 \times \tilde{\mathcal{N}} \mid x = p_i(t)\}.$$
Because $\tilde{\mathcal{N}}$ is the Brieskorn cover where point collision trajectories are resolved via the roots of the $E_6$ root system, these sections $\sigma_i$ are smooth and mutually non-intersecting over the entirety of the base space $\tilde{\mathcal{N}}$. Even over the discriminant locus where the underlying cubic surface degenerates, the coordinates of the 6 marked points remain distinct and well-defined by tracking their distinct exceptional directions.
\end{proof}

\subsection{The Resolution Sequence}
We construct the global simultaneous resolution $\tilde{\mathcal{S}}_{\tilde{\mathcal{N}}}$ by performing a sequence of six successive algebraic blow-ups of the 8-dimensional regular variety $\mathcal{X}_0 := \mathbb{P}^2 \times \tilde{\mathcal{N}}$ along the smooth submanifolds defined by the images of the sections $\sigma_i$.  Define $\mathcal{X}_1 = \text{Bl}_{\sigma_1(\tilde{\mathcal{N}})}(\mathcal{X}_0)$ as the algebraic blow-up of $\mathcal{X}_0$ along the image of the first section $\mathcal{P}_1$. Iteratively, for each $k = 2, \dots, 6$, we define:
\begin{equation}
	\mathcal{X}_k = \text{Bl}_{\tilde{\mathcal{P}}_k(\tilde{\mathcal{N}})}(\mathcal{X}_{k-1})
\end{equation}
where $\tilde{\mathcal{P}}_k$ denotes the strict transform of $\mathcal{P}_k$ in $\mathcal{X}_{k-1}$. Since the original sections $\sigma_i$ are completely disjoint on $\tilde{\mathcal{N}}$, their strict transforms never intersect at any intermediate stage of the sequence.  Said differently, each base section $\mathcal{P}_i$ (or its strict transform) is smooth and of codimension 2 inside a smooth variety, the resulting total space  $\mathcal{X} := \mathcal{X}_6$  is a smooth  algebraic variety, and its global isomorphism class is independent of the ordering of the blow-up operations.
For every parameter $t \in \tilde{\mathcal{N}}$, the fiber $\tilde{\mathcal{X}}_{t}$ is a smooth projective surface containing exceptional $(-2)$-curves wherever the original points collapsed or aligned. 

Set $\tilde{\mathcal{S}}_{\tilde{\mathcal{N}}}=\mathcal{X}_6.$ We have obtained a flat family of surfaces.

\begin{lemma}\label{lem:smoothness_transform}
	The composition $\tilde{p} =  \tilde{\mathcal{S}}_{\tilde{\mathcal{N}}} \to \tilde{\mathcal{N}}$ is a flat, smooth projective morphism.
\end{lemma}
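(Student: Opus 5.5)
The plan is to prove the three properties in turn, working with the structure of the iterated blow-up $\mathcal{X}_6$ over $\tilde{\mathcal{N}}$ and taking Lemma \ref{lem:sections_exist} as given. Projectivity comes first and is formal. $\mathcal{X}_0=\mathbb{P}^2\times\tilde{\mathcal{N}}\to\tilde{\mathcal{N}}$ is projective, and each blow-up $\mathcal{X}_k\to\mathcal{X}_{k-1}$ is the blow-up of a coherent ideal sheaf. It is therefore projective, and compositions of projective morphisms (over a quasi-projective base) are projective. Properness follows at the same time.

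Next comes smoothness, which is the real content. I would argue it locally on $\mathcal{X}_{k-1}$ near a point of $\tilde{\mathcal{P}}_k$, by induction on $k$. The hypothesis to carry through the induction is that $\mathcal{X}_{k-1}\to\tilde{\mathcal{N}}$ is smooth and that $\tilde{\mathcal{P}}_k$ is the image of a section of this map. A section of a smooth morphism of relative dimension $2$ is a closed submanifold that is itself isomorphic to the base via the projection. So near any of its points there are local analytic coordinates $(u,v,t_1,\dots,t_6)$ on $\mathcal{X}_{k-1}$ in which the projection is $(u,v,t)\mapsto t$ and the section is $\{u=v=0\}$. In these coordinates the blow-up is $\mathrm{Bl}_0\mathbb{C}^2\times\mathbb{C}^6$. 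Its two standard charts $(u,v'=v/u,t)$ and $(u'=u/v,v,t)$ are visibly smooth over the $t$-coordinates. Away from $\tilde{\mathcal{P}}_k$ nothing changes, so smoothness of $\mathcal{X}_k\to\tilde{\mathcal{N}}$ follows.

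To continue the induction I need the strict transform $\tilde{\mathcal{P}}_{k+1}$ to again be a section of $\mathcal{X}_k\to\tilde{\mathcal{N}}$. This uses the disjointness asserted in Lemma \ref{lem:sections_exist}. Since $\mathcal{P}_{k+1}$ does not meet the earlier centers, the blow-ups are isomorphisms near it. Its preimage is then literally a copy of it, and it remains the image of a section. In fact I would handle the whole sequence at once as the blow-up along the disjoint union of the six sections, which also gives the independence of ordering.

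Flatness is then automatic, since smooth morphisms are flat. As a cross-check, I could also see it from miracle flatness: $\mathcal{X}_6$ and $\tilde{\mathcal{N}}$ are regular (using the smoothness of $\tilde{\mathcal{N}}$ from Lemma \ref{lem:fiber_product_properties}), and all fibres are $2$-dimensional. Finally I would record that the fibre over $t$ is $\mathrm{Bl}_{\sigma_1(t),\dots,\sigma_6(t)}\mathbb{P}^2$. This holds because blowing up a section commutes with base change to a point of $\tilde{\mathcal{N}}$: the ideal of a section is locally generated by a regular sequence $(u,v)$ that stays regular on each fibre.

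The main obstacle is not the local computation but the input it rests on: the genuine existence of everywhere-disjoint smooth sections over all of $\tilde{\mathcal{N}}$, including the discriminant. If points actually collide in $\mathbb{P}^2$ over the discriminant, the local normal form above fails. In that case I would instead re-interpret $\tilde{\mathcal{P}}_k$ as an infinitely near point, namely a section of the previous exceptional divisor. The same chart argument still applies verbatim, since all that is needed is that each center is a section of the current smooth family $\mathcal{X}_{k-1}\to\tilde{\mathcal{N}}$. So I would phrase the induction in that generality, rather than literally requiring disjointness in $\mathbb{P}^2$.
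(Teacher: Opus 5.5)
Your proposal is correct and follows essentially the same route as the paper: smoothness is read off from the standard blow-up charts along codimension-two centers that are sections of the smooth family $\mathcal{X}_{k-1}\to\tilde{\mathcal{N}}$, and flatness is deduced from smoothness. Your version is more careful than the paper's sketch. You make the normal form $(u,v,t)$ explicit and justify projectivity. You also correctly point out that where the points genuinely collide, one must work with infinitely near centers. The paper's own proof tacitly admits such collisions at $t_0$, contrary to Lemma \ref{lem:sections_exist}. In that case, the fact that each center is a section of the current family is an input that has to be checked, not something that comes for free.
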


\begin{proof}
	Let $E_k \subset \mathcal{X}_k$ denote the exceptional divisor introduced at the $k$-th step. At the central point $t_0 \in \psi^{-1}([S_0])$, the sections $\sigma_i(t_0)$ align into the singular configuration matrix. The blow-up sequence replaces each colliding point with its projective normal bundle. Since the strict transform untangles the intersections of these exceptional curves fiberwise, the local equations for $\tilde{\mathcal{S}}_{\tilde{\mathcal{N}}}$ around the exceptional fibers match the standard local chart equations of a minimal resolution of a singularity. Smoothness follows from the non-vanishing of the Jacobian along the proper transform charts, and flatness  follows immediately.
\end{proof}

	By using the anticanonical linear system $\vert{}-K_{\tilde{\mathcal{S}}/\tilde{\mathcal{N}}}\vert{}$, we would like to define a relative morphism: 
	$$\pi :\tilde{\mathcal{S}}_{\tilde{\mathcal{N}}}\longrightarrow \mathcal{S}_{\mathcal{N}}\times _{\mathcal{N}}\tilde{\mathcal{N}}$$ such that the  map $\pi $ contracts exactly the $(-2)$-curves on the singular fibers, hence globally mapping our smooth family $\tilde{\mathcal{S}}$ onto the pulled-back singular universal family of cubic surfaces.

\begin{lemma}\label{lem:resolution}
	There exists a proper birational morphism $\pi: \tilde{\mathcal{S}}_{\tilde{\mathcal{N}}}\to \mathcal{S}_{\mathcal{N}}$ making the global resolution diagram commute and resolving the fibers fiberwise.
\end{lemma}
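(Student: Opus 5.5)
The plan is to build $\pi$ as the relative anticanonical morphism of the smooth family $\tilde p:\tilde{\mathcal{S}}_{\tilde{\mathcal{N}}}\to\tilde{\mathcal{N}}$ of Lemma \ref{lem:smoothness_transform}, and then identify its image with the pulled-back family $\mathcal{S}_{\tilde{\mathcal{N}}}=\mathcal{S}_{\mathcal{N}}\times_{\mathcal{N}}\tilde{\mathcal{N}}$ (composing with $\Psi_{\mathcal{S}}$ gives the map to $\mathcal{S}_{\mathcal{N}}$ in the statement).

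\textbf{Step 1: each fiber is a weak del Pezzo surface of degree 3.} Let $\omega^{-1}:=\omega^{-1}_{\tilde{\mathcal{S}}/\tilde{\mathcal{N}}}$; on $\mathcal{X}_6$ it equals $3H-\sum E_k$. Each fiber $\tilde{\mathcal{S}}_t$ is the blow-up of $\mathbb{P}^2$ at six points, possibly infinitely near, tracked by the sections of Lemma \ref{lem:sections_exist}. Because the configuration over $\tilde{\mathcal{N}}$ is controlled by $E_6$ subdiagrams, I will show that no four of these points lie on a line and no seven on a conic, in the almost-general-position sense. Consequently $-K_t$ is nef and big with $K_t^2=3$, and $h^0(-K_t)=4$ with $h^i(-K_t)=0$ for $i>0$, by Kawamata--Viehweg or by the Demazure computation \cite{Demazure1980}.

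\textbf{Step 2: build $\pi$ and control its image.} Cohomology and base change makes $\tilde p_*\omega^{-1}$ locally free of rank 4. After shrinking $\tilde{\mathcal{N}}$ to a contractible neighbourhood it is trivial, which gives a morphism $\tilde\pi:\tilde{\mathcal{S}}_{\tilde{\mathcal{N}}}\to\mathbb{P}^3\times\tilde{\mathcal{N}}$ over $\tilde{\mathcal{N}}$. By Demazure, $|-K_t|$ is base-point free on a weak del Pezzo surface of degree 3. On each fiber it contracts exactly the $(-2)$-curves (the curves $C$ with $K_t\cdot C=0$) and maps birationally onto a normal cubic surface with only RDPs. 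Thus the image $\mathcal{Y}:=\tilde\pi(\tilde{\mathcal{S}})$ is a flat family of cubic surfaces over $\tilde{\mathcal{N}}$, that is, a morphism $\tilde{\mathcal{N}}\to\mathbb{P}^{19}$.

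\textbf{Step 3: identify $\mathcal{Y}$ with the pullback.} This is the main obstacle. We must show that, after a fiberwise $\mathrm{PGL}_4$ correction, the map $\tilde{\mathcal{N}}\to\mathbb{P}^{19}$ agrees with $\mathcal{N}\circ\psi$. My first attempt is the following.
\begin{enumerate}
\item On the dense open set over $\mathcal{N}^*$, the blow-up of the six points recovers $S_{\psi(t)}$ by the classical correspondence. The labelling given by the sections is precisely a marking of the 27 lines, i.e.\ a point of the $W(E_6)$-cover. This is the construction of $\tilde{\mathcal{N}}=\mathcal{N}\times_{\mathcal{U}}\mathfrak{h}$ from Lemma \ref{lem:fiber_product_properties}.
\item Using the transversality of $\mathcal{N}$ to the $\mathrm{GL}(V)$-orbit, I will choose the trivialization of $\tilde p_*\omega^{-1}$ so that $\mathcal{Y}_t$ lands in the slice. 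Uniqueness of the slice point in each orbit near $[S_0]$ then forces $\mathcal{Y}_t=S_{\psi(t)}$ for generic $t$.
\item Both families are closed and flat over the reduced base $\tilde{\mathcal{N}}$, so equality on a dense open set extends to all of $\tilde{\mathcal{N}}$.
\end{enumerate}

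\textbf{Step 4: the remaining properties.} Properness follows because $\tilde{\mathcal{S}}$ is projective over $\tilde{\mathcal{N}}$. Birationality holds because $\pi$ is an isomorphism over the smooth locus, and commutativity of the diagram holds by construction. Fiberwise, $\pi_t$ is the minimal resolution, since it contracts exactly the $(-2)$-configurations and RDP resolutions are crepant. In particular, over $t_0$ we obtain the minimal resolution of the $E_6$ point. The delicate part throughout is Step 3, namely verifying that the point configuration chosen in Lemma \ref{lem:sections_exist} reproduces exactly the slice coordinates $t_1,\dots,t_6$ of $\tilde F_t$ rather than some other $\mathrm{PGL}_4$-translate.
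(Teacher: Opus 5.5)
\textbf{The gap is your Step 3, and your attempt (ii) rests on a false claim.} Your route is the paper's: relative anticanonical map of $\tilde p$, contraction of the $(-2)$-curves, then identification of the image with $\mathcal{S}_{\mathcal{N}}\times_{\mathcal{N}}\tilde{\mathcal{N}}$. The paper disposes of the last step in one sentence (``factors uniquely''), and you are right that it is the crux. But your Step~3(ii) does not prove it, because uniqueness of the slice point in each orbit near $[S_0]$ fails.

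The orbit of $[S_0]$ has codimension $6=\dim\mathcal{N}$, so its stabilizer in $\mathrm{PGL}_4$ is $2$-dimensional. Concretely, with $\lambda\cdot x=(\lambda^{-6}x_0,\lambda^{6}x_1,x_2,\lambda^{2}x_3)$ one has $\tilde F_t(\lambda\cdot x)=\lambda^{6}\tilde F_{t'}(x)$, where $t'=(\lambda^{-24}t_1,\lambda^{-18}t_2,\lambda^{-16}t_3,\lambda^{-10}t_4,\lambda^{-8}t_5,\lambda^{-2}t_6)$. So every orbit through a point $t\neq 0$ of $\mathcal{N}$ meets the slice in a curve accumulating at $[S_0]$. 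Forcing $\mathcal{Y}_t$ into $\mathcal{N}$ therefore only produces some holomorphic $n\colon\tilde{\mathcal{N}}\to\mathcal{N}$ with $\mathcal{Y}\cong n^*\mathcal{S}_{\mathcal{N}}$, and nothing makes $n=\psi$. Step (iii) then has nothing to extend.

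What is really needed is the following. If the sections are the marked blow-downs of $S_{\psi(t)}$, as (i) assumes, then over $\psi^{-1}(\mathcal{N}^*)$ there is a unique marked isomorphism $\tilde{\mathcal{S}}_t\cong S_{\psi(t)}$, since an automorphism of a smooth cubic acting trivially on $\mathrm{Pic}$ is the identity. This gives a holomorphic $g\colon\psi^{-1}(\mathcal{N}^*)\to\mathrm{PGL}_4$ with $g(t)\mathcal{Y}_t=S_{\psi(t)}$. The whole content of the lemma is that $g$ extends across $\psi^{-1}(D_{\mathcal{N}})$, where the non-compact automorphism groups of singular fibres could a priori make it diverge.

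One way to supply this:
\begin{itemize}
\item At a generic point of $\psi^{-1}(D_{\mathcal{N}})$, both $\mathcal{Y}_t$ and $S_{\psi(t)}$ have a single $A_1$ point and are GIT-stable.
\item Properness of the action on the stable locus keeps $g$ bounded there, so Riemann's extension theorem applies.
\item The remaining locus has codimension $\geq 2$ and $\mathrm{PGL}_4$ is affine, so Hartogs finishes, and you can set $\pi:=g\circ\tilde\pi$.
\end{itemize}
Alternatively, obtain $\tilde{\mathcal{S}}$ and $\pi$ together from Brieskorn's simultaneous resolution of the germ, glued with the identity away from the singular point. Crepancy then makes the anticanonical map equal to $\pi$ followed by the inclusion, and no identification problem arises.

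\textbf{Steps 1 and 4 are inconsistent with the construction you cite.} Lemma~\ref{lem:sections_exist} and the blow-up sequence use disjoint sections of $\mathbb{P}^2\times\tilde{\mathcal{N}}$, so every fibre is a blow-up of six distinct points. For six distinct points in almost general position, the only $(-2)$-curves are strict transforms of lines through three of the points and of a conic through all six. Two line classes meet only for complementary triples. So you get only disjoint $A_1$'s or a single $A_2$, never an $E_6$ configuration. Hence the central fibre of $\mathcal{X}_6$ is not the minimal resolution of $S_0$, contrary to Step~4.

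Your phrase ``possibly infinitely near'' is the right geometry: the $E_6$ surface is the blow-up of a chain of six infinitely near points with the first three collinear. But then the centres are sections of the intermediate $\mathcal{X}_{k-1}$, not strict transforms of sections of $\mathbb{P}^2\times\tilde{\mathcal{N}}$. Both the smoothness of the family and the almost-general-position claim of Step~1 must then be proved for that construction.

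Finally, composing with $\Psi_{\mathcal{S}}$ gives a map that is generically $|W(E_6)|$-to-one, not birational. The target must be the fibre product, as in Theorem~\ref{thm:rdp_resolution}(2).
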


\begin{proof}
	By definition we have $\mathcal{S}_{\mathcal{N}} = \{(x, u) \in \mathbb{P}^3 \times \mathcal{N} \mid F_u(x) = 0\}.$ Let $\omega_{\tilde{\mathcal{S}}/\tilde{\mathcal{N}}}$ be the relative dualizing (canonical) sheaf of the smooth family $\tilde{p}: \tilde{\mathcal{S}}_{\tilde{\mathcal{N}}}\to \tilde{\mathcal{N}}$. Consider the anticanonical sheaf $\mathcal{L} = \omega_{\tilde{\mathcal{S}}/\tilde{\mathcal{N}}}^{-1}$. By Ehresman's fibration theorem we have that for any fiber $\tilde{\mathcal{S}}_t$, this sheaf restricts to $-K_{\tilde{\mathcal{S}}_t} \cong 3H - \sum_{i=1}^6 E_i(t)$, which is nef and big.
	By the projection formula and cohomology vanishing ($H^1(\tilde{\mathcal{S}}_t, -K_{\tilde{\mathcal{S}}_t}) = 0$), the direct image sheaf $\tilde{p}*(\mathcal{L})$ is a free $\mathcal{O}_{\tilde{\mathcal{N}}}$-module of rank 4. In particular Grauerts base cange theorem yields $$\mathrm{rank}(\tilde{p}*(\mathcal{L}))= {H}^0(\tilde{\mathcal{S}_t},\mathcal{L}\vert_{\tilde{\mathcal{S}_t}})={H}^0(\tilde{\mathcal{S}_t},-K_{\tilde{\mathcal{S}_t}})$$
	Since $\tilde{\mathcal{S}_t}$ are smooth Delpezzo surfaces of degree three, the Riemann-Roch theorem gives $\chi(\tilde{\mathcal{S}_t},-K_{\tilde{\mathcal{S}_t}}=K_{\tilde{\mathcal{S}_t}}^2+1 =3+1= 4).$ By the Serre Duality and Kodaira Vanishing theorems, the higher cohomology groups vanish ($\mathrm{H}^1 = \mathrm{H}^2 = 0$). It follows that the Eueler characteristic is the dimension.
	The global sections of $\mathcal{L}$ define a relative embedding morphism over $\tilde{\mathcal{N}}$:
	$$ \Phi_{\vert -K \vert}: \tilde{\mathcal{S}}_{\tilde{\mathcal{N}}} \longrightarrow \mathbb{P}\left(\tilde{p}_*(\mathcal{L})^*\right) \cong \mathbb{P}^3 \times \tilde{\mathcal{N}} $$ 
	On any fiber $t$ where points are collinear or on a conic, the curves matching those roots have a degree of zero against $-K_{\tilde{\mathcal{S}}_t}$. Thus, $\Phi_{\vert -K \vert}$ automatically collapses all $(-2)$-curves fiberwise into isolated RDP singularities.
	The image of $\Phi_{\vert -K \vert}$ is a family of cubic surfaces over $\tilde{\mathcal{N}}$. By mapping down through the Galois cover $\psi: \tilde{\mathcal{N}} \to \mathcal{N}$, this factors uniquely through a proper birational map $\pi: \tilde{\mathcal{S}}_{\tilde{\mathcal{N}}} \to \mathcal{S}_{\mathcal{N}}$.
	We have the following commutative diagram.
	$$ \begin{CD} \tilde{\mathcal{S}}_{\tilde{\mathcal{N}}} @>\pi>> \mathcal{S}_{\mathcal{N}}\\ @V\tilde{p}VV @VVpV\\ \tilde{\mathcal{N}} @>>\psi> \mathcal{N} \end{CD} $$ 
	Because $\pi$ acts as the fiberwise minimal resolution, the simultaneous resolution is complete.
	
\end{proof}

\subsection{Proof of Theorem 1.1(3): The Relative Moduli Structure}
With the global simultaneous resolution $\tilde{p}: \tilde{S} \to \tilde{\mathcal{N}}$ fully established.  We are now positioned to conclude the proof of the main theorem by identifying the  structure of the moduli space $\mathbb{H}_{\mathcal{N}}$ of linear determinantal representations. Lets recall facts from \cite{Lehn2017}. 

Whereas twisted cubics on smooth cubic surfaces are a classical subject of study, twisted cubics on surfaces with ordinary double points are well explained in \cite[\S 2.1]{Lehn2017}. For basic facts on the root lattice $E_6$ in connection with the geometry of the cubic surface we point to \cite[\S 9]{Dolgachev2012}and \cite{Demazure1980}, for lines on singular cubic surfaces \cite{BruceWall1979}.

Given a singular cubic surface $S$ with ordinary double points, its minimal resolution $\tilde{S}$ is a weak del Pezzo surface and the orthogonal complement $K_{\tilde{S}}^{\perp} \subset \text{Pic}(\tilde{S})$ is a lattice of type $E_6$. The exceptional divisor of the resolution $r \colon \tilde{S} \to S$ consists of $(-2)$-curves, which form a subset of the root system $R := \{\alpha \colon \alpha^2 = -2\} \subset K_{\tilde{S}}^{\perp}$ and generate a subroot system $R_0 \subset R$. Let $W(R_0)$ be the Weyl group generated by reflections of elements in $R_0$.  Theorem 2.1 in \cite{Lehn2017} gives a description of the Hilbert scheme with the reduced structure of generalised twisted cubics on $S$ as follows:
\begin{equation}
	\text{Hilb}^{\text{gtc}}(S)_{\text{red}} \simeq R/W(R_0) \times \mathbb{P}^2 \simeq\sqcup_{B\in R/W(R_0)} \mid\mathcal{O}_{\tilde{S}}(\alpha_{B}^- - K_{\tilde{S}})\mid.
\end{equation}

For any $\alpha \in R \setminus R_0$ and for any curve $C \in |\alpha - K_{\tilde{S}}|$ the image $r(C)$ is a generalised twisted cubic on $S$. Conversely, the pullback of any aCM-curve on $S$ lies in such a linear system \cite[Proposition 2.2, Proposition 2.5, Proposition 2.6]{Lehn2017}. On the other hand, roots in $R_0$ correspond to families of nCM curves.
Recall from \cite{Lehn2017} that a linear determinantal representation of a projective cubic surface $S_t$ is uniquely determined by a line bundle $\mathcal{E}$ on $S_t$ satisfying $H^0(S_t, \mathcal{E}(-1)) = H^1(S_t, \mathcal{E}(-1)) = 0$ and $\chi(\mathcal{E}(k)) = \frac{1}{2}(k+1)(k+2)$. Geometrically, such a bundle corresponds to a flat family of arithmetically Cohen--Macaulay (aCM) twisted cubics. On a smooth minimal resolution $\tilde{S}_t$, these tracking classes are classified by elements of the root system $R \subset K_{\tilde{S}_t}^{\perp}$ that do not belong to the subroot system $R_0$ contracted by the resolution map.

Let $\mathbb{H}_{\mathcal{N}} \to \mathcal{N}$ be the relative moduli space of linear determinantal representations over the transversal deformation slice. Let $\mathbb{H}_{\tilde{\mathcal{N}}} := \mathbb{H}_{\mathcal{N}} \times_{\mathcal{N}} \tilde{\mathcal{N}}$ denote its base change to the Brieskorn Galois covering space.

\begin{lemma}\label{lemma:discrete_cover}
	The pulled-back moduli space $\mathbb{H}_{\tilde{\mathcal{N}}}$ is canonically isomorphic over $\tilde{\mathcal{N}}$ to the constant discrete scheme:
	\[
	\mathbb{H}_{\tilde{\mathcal{N}}} \cong \tilde{\mathcal{N}} \times R
	\]
	where $R$ is the root system of type $E_6$ viewed as a discrete set of 72 points.
\end{lemma}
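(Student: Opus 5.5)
The plan is to build an explicit morphism $\tilde{\mathcal{N}} \times R \to \mathbb{H}_{\tilde{\mathcal{N}}}$ over $\tilde{\mathcal{N}}$ from the simultaneous resolution of Theorem~\ref{thm:rdp_resolution}, and then check that it is bijective on fibres and étale. First I will use the six disjoint sections of Lemma~\ref{lem:sections_exist} to trivialise the relative Picard lattice of the smooth family $\tilde{p}:\tilde{\mathcal{S}}_{\tilde{\mathcal{N}}}\to\tilde{\mathcal{N}}$ from Lemma~\ref{lem:smoothness_transform}. The classes $H, E_1(t),\dots,E_6(t)$ are restrictions of global divisors on $\mathcal{X}_6$, namely the pullback of $\mathcal{O}_{\mathbb{P}^2}(1)$ and the exceptional divisors of the six blow-ups. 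Hence $\mathrm{Pic}(\tilde{\mathcal{S}}_t)\cong \mathbb{Z}^{1,6}$ canonically for every $t$, with no monodromy. In particular $K_{\tilde{\mathcal{S}}_t}^\perp$ and its root system $R$ are identified with one fixed copy of $E_6$ for all $t$.

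Next, for each $\alpha \in R$ I will take the global line bundle $\mathcal{L}_\alpha := \mathcal{O}_{\mathcal{X}_6}(\alpha)\otimes \omega^{-1}_{\tilde{\mathcal{S}}/\tilde{\mathcal{N}}}$, whose fibrewise class is $\alpha - K_{\tilde{\mathcal{S}}_t}$. I then pass to the cubic family via the contraction $\pi$ of Lemma~\ref{lem:resolution}. Following \cite{Lehn2017}, the sheaf $\mathcal{E}_{\alpha,t} := r_*\mathcal{O}_{\tilde{\mathcal{S}}_t}(\alpha^{-}_B - K)$ is the aCM sheaf attached to the chamber-minimal representative $\alpha_B^-$ of the orbit $B=W(R_0)\alpha$. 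It satisfies $H^0(\mathcal{E}(-1))=H^1(\mathcal{E}(-1))=0$ and has the Hilbert polynomial $\tfrac12(k+1)(k+2)$. By cohomology and base change, since $h^0=3$ and $h^1=h^2=0$ are constant, $\pi_*\mathcal{L}_\alpha$ is a flat family of such sheaves. This yields a section $s_\alpha:\tilde{\mathcal{N}}\to\mathbb{H}_{\tilde{\mathcal{N}}}$, and together these give a morphism $\tilde{\mathcal{N}}\times R\to \mathbb{H}_{\tilde{\mathcal{N}}}$.

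Finally I will show this morphism is an isomorphism. Over the smooth locus, where $R_0=\emptyset$, the 72 sections are pairwise distinct and exhaust the fibre, by Buckley's count \cite{Buckley2006}. The moduli space is also étale there, because determinantal representations of a fixed surface are rigid: $\mathrm{Ext}^1(\mathcal{E},\mathcal{E})=0$ for these rank-one aCM sheaves. Hence $\mathbb{H}_{\tilde{\mathcal{N}}}$ is unramified of degree 72 over the complement of the pulled-back discriminant. To extend across the discriminant, I will argue that $\mathbb{H}_{\tilde{\mathcal{N}}}\to\tilde{\mathcal{N}}$ is finite, since it is proper and quasi-finite. The image of the 72 sections is then a closed subscheme that agrees with $\mathbb{H}_{\tilde{\mathcal{N}}}$ over a dense open set. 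Normality of the base $\tilde{\mathcal{N}}$ (Lemma~\ref{lem:fiber_product_properties}), together with the fact that each $s_\alpha$ is a section of a separated map, then forces $\mathbb{H}_{\tilde{\mathcal{N}}}$ to be the disjoint union of the 72 section images.

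The main obstacle is the behaviour over the discriminant. There, distinct roots $\alpha,\alpha'$ in the same $W(R_0)$-orbit give non-isomorphic sheaves on $\tilde{\mathcal{S}}_t$ but can push forward to the same sheaf on the singular cubic. Moreover, roots in $R_0$ give nCM rather than aCM curves. I would try to handle this by working in the relative moduli space upstairs on the resolution, that is, by defining $\mathbb{H}$ as the closure of the smooth-locus moduli space. In that setting the 72 sections stay disjoint because their classes $\alpha-K$ are distinct in the constant lattice. The fibre over $t$ is therefore still $R$ scheme-theoretically, and it maps onto $R/W(R_0)$ of \cite[Theorem 2.1]{Lehn2017} only after descent. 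This is consistent with the eventual $W(E_6)$-quotient in Theorem~\ref{thm:rdp_resolution}(3).
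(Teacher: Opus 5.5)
There is a genuine gap. It is the one you flag yourself as the main obstacle, and your proposed remedy changes the object of the lemma rather than closing it.

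The lemma is about $\mathbb{H}_{\tilde{\mathcal{N}}}=\mathbb{H}_{\mathcal{N}}\times_{\mathcal{N}}\tilde{\mathcal{N}}$, and base change does not alter fibres. The fibre over $\tilde t$ is the set of linear determinantal representations of $S_{\psi(\tilde t)}$. By the facts recalled from \cite{Lehn2017} in Section 3, this set is $(R\setminus R_0)/W(R_0)$.

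\begin{itemize}
\item For one node, $R_0=\{\pm\beta\}$: the $30$ roots orthogonal to $\beta$ stay single and the other $40$ are paired by $s_\beta$, giving $50$ elements.
\item At the central point, the six $(-2)$-curves span all of $K^{\perp}$, so $R_0=R$ and the fibre is empty.
\end{itemize}

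So no argument can give $\mathbb{H}_{\tilde{\mathcal{N}}}\cong\tilde{\mathcal{N}}\times R$ over $\psi^{-1}(D_{\mathcal{U}})$. In your construction the failure shows up at specific steps.

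\begin{itemize}
\item For $\alpha$ a positive root of $R_0$, the class $\alpha-K$ is $-K$ plus an effective exceptional cycle. On the special fibre this gives $h^0=4$ and $h^1=1$, so your cohomology-and-base-change step breaks down.
\item For $\alpha$ a negative root of $R_0$, $r_*\mathcal{O}(\alpha-K)=\mathfrak{m}_p\cdot\mathcal{O}_S(1)$ near the singular point $p$. This sheaf has the right Hilbert polynomial but is not Cohen--Macaulay, so $s_\alpha$ does not land in $\mathbb{H}$.
\item $\mathbb{H}_{\tilde{\mathcal{N}}}\to\tilde{\mathcal{N}}$ is not proper: its fibre over the central point is empty while its image contains the dense smooth locus. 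So the step ``proper and quasi-finite, hence finite'' fails.
\item For $\alpha\notin R_0$ and $w\in W(R_0)$, the sections $s_\alpha$ and $s_{w\alpha}$ pass through the same point over $\tilde t$. A section of a separated map is a closed immersion, but nothing forces distinct sections to be disjoint. Normality of $\tilde{\mathcal{N}}$ therefore cannot split $\mathbb{H}_{\tilde{\mathcal{N}}}$ into $72$ copies.
\end{itemize}

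Redefining $\mathbb{H}$ as a closure upstairs on $\tilde{\mathcal{S}}$ makes the lemma the tautology that the constant lattice of classes $\alpha-K$ is $\tilde{\mathcal{N}}\times R$. That object is no longer the fibre product. Pulling $(\tilde{\mathcal{N}}\times R)/W(E_6)$ back along the ramified map $\psi$ gives fibre $R/\mathrm{Stab}_W(\tilde t)=R/W(R_0)$ over $\tilde t$, not $R$. So your closing appeal to ``descent'' does not reconcile the two. At best your trivial cover maps onto $\mathbb{H}_{\tilde{\mathcal{N}}}$ by a normalization-type map: it discards the $R_0$-sheets and glues $W(R_0)$-conjugate sheets over the discriminant.

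The paper's proof has the same lacuna by a different route. Instead of building sections, its item (2) asserts that the classes extend across $\psi^{-1}(D_{\mathcal{U}})$ ``without permuting''. It then applies simple connectivity of $\tilde{\mathcal{N}}$ to a cover it has only shown to be finite \'etale off $\psi^{-1}(D_{\mathcal{U}})$.

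What your argument does prove is the restriction of the lemma to $\tilde{\mathcal{N}}\setminus\psi^{-1}(D_{\mathcal{U}})$. There your route is sounder than the paper's appeal to simple connectivity, which does not apply on that open set. Your route uses the trivialized Picard lattice, Buckley's count, and $\mathrm{Ext}^1(\mathcal{E},\mathcal{E})=H^1(\mathcal{O}_S)=0$. Get the trivialization from $R^2\tilde p_*\mathbb{Z}$ over $\tilde{\mathcal{N}}\cong\mathfrak{h}$ rather than from six disjoint sections. Such sections cannot exist at the $E_6$ point, whose minimal resolution requires infinitely near points.
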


\begin{proof}
	Consider the relative Picard scheme $\text{Pic}(\tilde{S}_{\tilde{\mathcal{N}}}/\tilde{\mathcal{N}})$ associated to the smooth family $\tilde{p}: \tilde{S}_{\tilde{\mathcal{N}}} \to \tilde{\mathcal{N}}$ of Delpezzo surfaces. Because $\tilde{\mathcal{N}}$ is constructed via the fiber product $\mathcal{N} \times_\mathcal{U} \mathfrak{h}$, it unramifies the local monodromy group action of the Weyl group $W(E_6)$. Consequently, the relative root lattice bundle $K_{\tilde{S}_{\tilde{N}}}^{\perp} \subset \text{Pic}(\tilde{S}_{\tilde{N}}/\tilde{\mathcal{N}})$ is a locally constant sheaf of lattices of type $E_6$.  Moreover, we have a canonical identification of the lattice bundle with product $K_{\tilde{S}_{\tilde{N}}}^{\perp} \cong \tilde{N}\times \mathbb{Z}^{6}.$
	
	Recall \cite{Lehn2017} $\mathbb{H}_{\tilde{N}}$ paprmaterises classes of determinantal representations bijectively associated to twisted cubics $C$ that lie in the linear systems of the form $\mathcal{O}_{\tilde{S}_t}(\alpha-K_{\tilde{S}_t})\cong\mathcal{O}_{\tilde{S}_t}(C).$  
	The choices of aCM twisted cubics on the resolved fibers vary flatly across the entire base $\tilde{\mathcal{N}}$. 
	\begin{enumerate}
		\item Over the open smooth locus $\tilde{\mathcal{N}} \setminus \psi^{-1}(D_\mathcal{U})$, the fibers are smooth cubic surfaces, each corresponding fibre $H_t$ possessing exactly $|R| = 72$ distinct determinantal representations corresponding to the 72 roots of $E_6$. Thus, the morphism $\mathbb{H}_{\tilde{\mathcal{N}}} \to \tilde{\mathcal{N}}$ is a finite, \'etale covering of degree 72.
		\item Over the ramified discriminant locus $\psi^{-1}(D_\mathcal{U})$, the underlying cubic surface develops isolated rational double points, but the simultaneous resolution fiber $\tilde{S}_t$ remains smooth. The non-contracted roots $\alpha \in R \setminus R_0$ uniquely extend the tracking classes across the boundary hyperplanes without permuting.
	\end{enumerate}
	Since $\tilde{\mathcal{N}}$ is a smooth, simply connected variety , any finite \'etale cover over it must break into a disjoint union of copies of the base space. Since the fibers are governed by configurations of roots of the root system $R$, we obtain the canonical identification $\mathbb{H}_{\tilde{\mathcal{N}}} \cong \tilde{\mathcal{N}} \times R$.
\end{proof}

We can now complete the proof of the third part of our main theorem.

\begin{proof}[Proof of Theorem 1.1, Part 3]
	By the universal property of the categorical quotient under a finite group action, the original moduli space $\mathbb{H}_{\mathcal{N}}$ over the deformation slice $\mathcal{N}$ is retrieved by taking the quotient of its pulled-back space under the natural action of the Galois group. By construction, the branched covering map $\psi: \tilde{\mathcal{N}} \to \mathcal{N}$ has Galois group isomorphic to the Weyl group $W(E_6)$, giving the identification $\mathcal{N} \cong \tilde{\mathcal{N}}/W(E_6)$. The induced action of $W(E_6)$ on the base-changed space $\mathbb{H}_{\tilde{\mathcal{N}}}$ acts diagonally: it acts on the base coordinates $\tilde{\mathcal{N}} \cong \mathfrak{h}$ via standard reflection operations, and it acts on the discrete fiber components $R \subset \mathfrak{h}^*$ via the natural contragredient representation permuting the roots.
	
	Applying the categorical quotient to the isomorphism established in Lemma~\ref{lemma:discrete_cover}, we obtain:
	\[
	\mathbb{H}_{\mathcal{N}} \cong \mathbb{H}_{\tilde{\mathcal{N}}} / W(E_6) \cong (\tilde{\mathcal{N}} \times R) / W(E_6)
	\]
	This yields the desired  isomorphism $\mathbb{H}_{\mathcal{N}} \cong (\mathfrak{h} \times R) / W(E_6).$
	We have modeled the  moduli space of linear determinantal representations is as a diagonal quotient of the Cartan space and the root system, concluding the proof of Theorem 1.1.
\end{proof}

\section{Looijenga's Invariant Theta Expansions and Affine Weyl Filtrations}
\label{sec:looijenga_theory}

We now transition to the boundary configuration where the central projective cubic surface $S_0 \subset \mathbb{P}^3$ contains a unique isolated simple elliptic singularity of type $\tilde{E}_6$ (or degree $d=3$). Geometrically, $S_0$ is the affine cone over a smooth elliptic curve $E = \mathbb{C}/\Lambda$, where $\Lambda = \mathbb{Z} \oplus \mathbb{Z}\tau$ ($\text{Im}(\tau) > 0$), with its zero section contracted to an isolated vertex point. 

Because the exceptional divisor of the minimal resolution is the elliptic curve $E$ itself not a configuration of rational curves, the local vanishing cycles generate an infinite-dimensional affine root system \cite{Saito1974}\cite{Looijenga1978}. The corresponding local monodromy group is an infinite extension of the finite Weyl group $W(E_6)$ by the translation lattice $\Lambda^6\cong Q^\vee$. To linearize the deformation data over $E$, we apply Looijenga's invariant theory of affine Weyl groups \cite{Looijenga1976}.
Let $Q^\vee$ denote the coroot lattice of the root system of type $E_6$, and let $\mathfrak{h} = Q^\vee \otimes_{\mathbb{Z}} \mathbb{C} \cong \mathbb{C}^6$ be the complexified Cartan subalgebra. We define the complex abelian variety $A$ of dimension 6 by:
\[
A := Q^\vee \otimes_{\mathbb{Z}} E \cong \mathfrak{h} / (Q^\vee \otimes_{\mathbb{Z}} \Lambda)
\]
Looijenga establishes that there exists a unique holomorphic line bundle $\mathbb{L} \longrightarrow A$ whose total space tracks the weight geometry of the root system \cite{Looijenga1976}. 
\begin{proposition}[Looijenga\cite{Looijenga1976}]
	There exists a unique holomorphic line bundle $\mathbb{L}$ over the abelian variety $A$, equipped with a compatible lift of the $W$-action, whose first Chern class $c_1(\mathbb{L}) \in H^2(A, \mathbb{Z})$ corresponds to the normalized, symmetric, positive-definite Killing form $I(\cdot, \cdot)$ on the coroot lattice $Q^\vee$. 
\end{proposition}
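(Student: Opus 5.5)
The plan is to build $\mathbb{L}$ from a factor of automorphy via the Appell--Humbert theorem, check that it carries a $W$-linearization, and then derive uniqueness from the structure of $\mathrm{Pic}(A)$.

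\textbf{Step 1: the Hermitian form.} Write $A=\mathfrak{h}/\Gamma$ with $\Gamma:=Q^\vee\otimes\Lambda$. Extend the normalized Killing form $I$ (normalized so that $I(\alpha^\vee,\alpha^\vee)=2$, hence even and $W$-invariant on $Q^\vee$) complex-bilinearly to $\mathfrak{h}$. Then set
\[
H(z,w)=\frac{I(z,\bar w)}{\mathrm{Im}\,\tau}.
\]
This is a positive definite Hermitian form on $\mathfrak{h}$. Its imaginary part $\mathrm{Im}\,H$ is integral on $\Gamma$: on $(x\otimes 1,\,y\otimes\tau)$ it equals $I(x,y)$, and it vanishes on pairs lying both in $Q^\vee\otimes 1$ or both in $Q^\vee\otimes\tau$. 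So $\mathrm{Im}\,H$ is the alternating form $I\otimes(\text{intersection form on }\Lambda)$, and the class it defines in $H^2(A,\mathbb{Z})=\bigwedge^2\Gamma^*$ is exactly ``$c_1$ corresponding to $I$''.

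\textbf{Step 2: existence.} Since $I$ is even, the map $\chi(x\otimes 1+y\otimes\tau)=1$ is a semicharacter for $H$; the evenness of $I$ is precisely what makes the sign $e^{\pi i\,\mathrm{Im}H(\gamma,\gamma')}$ consistent. The Appell--Humbert theorem then yields a line bundle $\mathbb{L}=L(H,\chi)$, which is ample because $H>0$. Concretely, it is defined by the factor of automorphy
\[
e_\gamma(z)=\exp\!\Bigl(-\pi i\tau I(y,y)-2\pi i I(y,z)\Bigr),\qquad \gamma=x+\tau y .
\]

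\textbf{Step 3: the $W$-action.} $W$ acts linearly on $\mathfrak{h}$ and preserves $\Gamma$, $I$ and the decomposition $Q^\vee\otimes 1\oplus Q^\vee\otimes\tau$. Hence $e_{w\gamma}(wz)=e_\gamma(z)$. The lift $(z,\xi)\mapsto(wz,\xi)$ on the trivial bundle therefore descends to $\mathbb{L}$ and gives a compatible $W$-linearization. Together with the $\mathbb{C}^*$ fibre scaling, this is the structure Looijenga uses to define the invariant theta sections.

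\textbf{Step 4: uniqueness, the main obstacle.} By Appell--Humbert, any line bundle with the same $c_1$ differs from $\mathbb{L}$ by some $P\in\mathrm{Pic}^0(A)\cong A^\vee$, so the statement is false without normalization. I will therefore read uniqueness as follows: the bundle is symmetric ($[-1]^*\mathbb{L}\cong\mathbb{L}$), $W$-invariant, and its $W$-linearization is normalized at the origin. The argument runs in three moves.
\begin{enumerate}
\item Impose the normalization $\chi|_{Q^\vee\otimes1}=1=\chi|_{Q^\vee\otimes\tau}$. Equivalently, the $W$-linearization restricted to the fibre over $0$ is trivial.
\item $W$-invariance forces $P\in(A^\vee)^W$. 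This group is finite: it is a subgroup of $(P^\vee/Q^\vee)\otimes\Lambda^\vee$, the $3$-torsion coming from the center $\mathbb{Z}/3$ of $E_6$.
\item Symmetry together with the normalized linearization (which must agree on the $W$-fixed points) kills this residual finite ambiguity.
\end{enumerate}
The delicate part is this last check on the finite group $(A^\vee)^W$. If it fails, uniqueness holds only up to this finite group of $3$-torsion twists, and I would state the result that way, as Looijenga does up to such choices.
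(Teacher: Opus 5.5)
The paper gives no proof of this proposition; it is quoted from Looijenga. Your Steps 1--3 produce exactly the factor of automorphy $\exp(-\pi i\tau I(y,y)-2\pi i I(y,z))$ that the paper later uses in its functional equation for $\vartheta^{(k)}_\lambda$. The existence of $\mathbb{L}$ together with its $W$-lift is fine.

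\textbf{The gap: Step 4.} It never proves uniqueness, and it rests on a miscomputation.
\begin{itemize}
\item The group $(P^\vee/Q^\vee)\otimes\Lambda$ you invoke is $A^W$, the $W$-fixed points of $A=Q^\vee\otimes E$ itself. (A point $z$ is fixed iff $\langle\alpha,z\rangle\in\Lambda$ for all roots.) It is not the $W$-fixed part of the dual $A^\vee\cong P\otimes E$.
\item By Appell--Humbert, $\mathrm{Pic}^0(A)=\mathrm{Hom}(\Gamma,U(1))$ with $w^*L(0,\chi)=L(0,\chi\circ w)$. So a $W$-invariant element is a character of the coinvariants $\Gamma_W=(Q^\vee)_W\otimes\Lambda$.
\item For adjacent nodes $\alpha,\beta$ of the Dynkin diagram, $s_\alpha(\beta^\vee)-\beta^\vee=\alpha^\vee$. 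Since $E_6$ is connected, $(Q^\vee)_W=0$, hence $(\mathrm{Pic}^0A)^W=0$.
\end{itemize}
Consequently any two $W$-linearizable bundles with $c_1=[\mathrm{Im}H]$ are isomorphic. There is no residual $3$-torsion ambiguity. No symmetry hypothesis is needed either: symmetry follows, since $[-1]^*\mathbb{L}$ is again $W$-invariant with the same $c_1$.

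What remains non-unique is only the lift. Since $H^0(A,\mathcal{O}_A^\ast)=\mathbb{C}^\ast$, two lifts differ by a character of $W$, so possibly by $\mathrm{sgn}$. The normalization ``$W$ acts trivially on the fibre over $0$'' removes this, and your lift $(z,\xi)\mapsto(wz,\xi)$ satisfies it. This normalization concerns the lift, not the semicharacter. Your ``equivalently'' in move (1) therefore conflates two different pieces of data, and move (3) as written is a hope rather than an argument.

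\textbf{A secondary error: Step 2.} $\chi\equiv 1$ is not a semicharacter. The rule $\chi(\gamma+\gamma')=\chi(\gamma)\chi(\gamma')e^{\pi i\,\mathrm{Im}H(\gamma,\gamma')}$ would force $\mathrm{Im}H(x,\tau y)=\pm I(x,y)$ to be even for all $x,y\in Q^\vee$. This fails for adjacent simple coroots. Evenness of $I$ controls $I(x,x)$, which is irrelevant here because $\mathrm{Im}H(\gamma,\gamma)=0$.

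The correct choice is $\chi_0(x+\tau y)=(-1)^{I(x,y)}$. It is trivial on $Q^\vee\otimes 1$ and on $Q^\vee\otimes\tau$ separately, as in your move (1), but not on all of $\Gamma$. Existence is unaffected: your explicit $e_\gamma$ satisfies the cocycle identity using only $I(y,x')\in\mathbb{Z}$, and it represents $L(H,\chi_0)$.
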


The global sections of its tensor powers $\text{H}^0(A, \mathbb{L}^{\otimes k})$ define classical complex theta functions of level $k$.

\subsection{The Graded Ring of Invariant Theta Functions}
Following Looijenga's canonical formulation \cite{Looijenga1976}, the level-$k$ theta functions on the Cartan subalgebra $\mathfrak{h} \cong \mathbb{C}^6$ are defined relative to the normalized $W$-invariant symmetric bilinear form $(\cdot, \cdot)$ on the root lattice, scaled by the modular parameter $\tau \in \mathbb{H}$ of the structural elliptic curve $E$. Let $\mathbf{z} \in \mathfrak{h}$ be the complex coordinate vector, and let $Q^\vee$ denote the coroot lattice of type $E_6$. For a given weight $\lambda \in Q$, the basic level-$k$ Looijenga theta function $\vartheta_{\lambda}^{(k)}(\mathbf{z} \mid \tau)$ is given by the explicitly convergent Fourier-Jacobi series:
\[
\vartheta_{\lambda}^{(k)}(\mathbf{z} \mid \tau) = \sum_{\gamma \in Q^\vee} \exp \left( \pi i k \tau (\gamma + \frac{\lambda}{k}, \gamma + \frac{\lambda}{k}) + 2\pi i k (\gamma + \frac{\lambda}{k}, \mathbf{z}) \right).
\]
Under the translation action of the affine lattice $\Gamma_A:=Q^\vee \oplus \tau Q^\vee$, these sections satisfy the strict functional equation 
\[\vartheta_{\lambda}^{(k)}(\mathbf{z} + \gamma_1 + \tau \gamma_2 \mid \tau) = \exp(-\pi i k \tau (\gamma_2, \gamma_2) - 2\pi i k (\gamma_2, \mathbf{z})) \vartheta_{\lambda}^{(k)}(\mathbf{z} \mid \tau).
\]
To construct the coordinates linearizing the semiuniversal deformation base $\mathcal{U}$, Looijenga utilizes the canonical Weyl symmetrization operator  mapping the standard theta space to the invariant subspace $\mathcal{A}_k = \mathbb{H}^0(A, \mathbb{L}^{\otimes k})^W$. For each fundamental weight $\varpi_i$ ($i=1, \dots, 6$), the corresponding independent $W(E_6)$-invariant theta function $\Theta_i$ of level $k_i = h_i$ is constructed by averaging over the discrete Weyl orbit:
\[
\Theta_i(\mathbf{z} \mid \tau) := \sum_{[\lambda] \in W \cdot \varpi_i} \vartheta_{\lambda}^{(h_i)}(\mathbf{z} \mid \tau) = \frac{1}{|W_{\varpi_i}|} \sum_{w \in W(E_6)} \vartheta_{w(\varpi_i)}^{(h_i)}(\mathbf{z} \mid \tau),
\]
where $W_{\varpi_i}$ is the stabilizer subgroup of the weight $\varpi_i$ under the reflection action, and $h_i$ is the corresponding Coxeter label. The primitive level-1 invariant generator $\Theta_0$ tracking the unramified locus corresponds to the zero-weight initialization $\lambda = 0$:
\[
\Theta_0(\mathbf{z} \mid \tau) = \sum_{\gamma \in Q^\vee} \exp \left( \pi i \tau (\gamma, \gamma) + 2\pi i (\gamma, \mathbf{z}) \right),
\]
which acts as the structural denominator for the relative localization. The sections $\Theta_0, \Theta_1, \dots, \Theta_6$ provide a complete set of analytic coordinates on the algebraic quotient variety $\mathcal{U}$, mapping the non-linear algebraic trajectories of the collapsing $E_6$ lines precisely into the smooth geometric transitions of the rank-6 vector bundle $\mathcal{Z}$ in section 5.

The finite Weyl group $W = W(E_6)$ acts naturally on the abelian variety $A$ and lifts to an action on the line bundle $\mathbb{L}$. We are interested in the subspace of $W$-invariant theta functions of level $k$:
\[
\mathcal{A}_k := \mathbb{H}^0(A, \mathbb{L}^{\otimes k})^W.
\]
The direct sum of these spaces forms a graded $\mathbb{C}$-algebra, which governs the semiuniversal deformation of the simple elliptic singularity.

\begin{lemma}[Looijenga \cite{Looijenga1976}]\label{lem:looijenga_algebra}
	The graded algebra of $W$-invariant theta functions
	$
	\mathcal{A}_W := \bigoplus_{k \geq 0} \mathcal{A}_k
	$
	is a freely generated polynomial algebra in 7 variables:
	\[
	\mathcal{A}_W \cong \mathbb{C}[\Theta_0, \Theta_1, \Theta_2, \Theta_3, \Theta_4, \Theta_5, \Theta_6]
	\]
	where $\Theta_0 \in \mathcal{A}_1$ is the unique primitive invariant theta section of level 1, and $\Theta_1, \dots, \Theta_6$ are independent invariant theta functions whose levels $k_i$ correspond to the exponents of the affine root system $\tilde{E}_6$.
\end{lemma}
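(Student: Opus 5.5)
The plan is to follow Looijenga's original strategy: reduce the $W$-invariant theta functions to $W$-invariant exponential sums by a Fourier--Jacobi (weight) decomposition, and then invoke a Chevalley-type theorem for the finite Weyl group acting on the weight lattice. First, decompose $\mathrm{H}^0(A,\mathbb{L}^{\otimes k})$ using the functional equation for translations by $\tau Q^\vee$. Any section of level $k$ pulls back to a holomorphic function on $\mathfrak{h}$ that is $Q^\vee$-periodic, so it has a Fourier expansion in characters $e^{2\pi i(\mu,\mathbf{z})}$. The quasi-periodicity under $\tau\gamma_2$ forces the coefficients along each orbit $\mu+kQ^\vee$ to be determined by one of them. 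Hence a basis of $\mathrm{H}^0(A,\mathbb{L}^{\otimes k})$ is given by the $\vartheta^{(k)}_\lambda$ with $\lambda$ running over $Q/kQ$ (identifying $Q^\vee\cong Q$ for the simply laced $E_6$ with the normalized form). Taking $W$-invariants, $\mathcal{A}_k$ has as basis the orbit sums $\sum_{\lambda\in W\mu}\vartheta^{(k)}_\lambda$, indexed by $W\ltimes kQ^\vee$-orbits on $Q$. By the standard alcove argument, these orbits correspond to dominant weights $\mu$ with $(\mu,\theta)\le k$. Writing $\mu=\sum m_i\varpi_i$ (restricted to the appropriate lattice class), the condition becomes $\sum h_i m_i\le k$.

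Second, compare with the candidate polynomial algebra. Declare $\deg\Theta_0=1$ and $\deg\Theta_i=h_i$. The monomials $\Theta_0^{m_0}\prod\Theta_i^{m_i}$ of degree $k$ correspond bijectively to tuples $(m_0,m_1,\dots,m_6)$ with $m_0+\sum h_i m_i=k$. These are exactly the dominant $\mu$ with $(\mu,\theta)\le k$, with $m_0$ recording the slack. So $\dim\mathcal{A}_k$ equals the number of degree-$k$ monomials, and the Hilbert series agree, both being $\prod_{i=0}^6(1-t^{h_i})^{-1}$ with $h_0=1$. It therefore suffices to show that the monomials span $\mathcal{A}_k$, i.e. that the $\Theta_i$ generate.

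Third, establish the spanning by a triangularity argument, which I expect to be the main obstacle. Order the dominant weights of level $\le k$ by the dominance order. Multiplying theta orbit sums corresponds to multiplying leading exponentials, so the leading term of $\Theta_0^{m_0}\prod\Theta_i^{m_i}$ is the orbit sum attached to $\sum m_i\varpi_i$ at level $k$, plus terms attached to strictly lower dominant weights. The lower terms arise because the product of $e^{2\pi i(\lambda,\mathbf{z})}$ over orbit representatives lands in the convex hull of the orbit of the sum. The delicate point is that the Fourier--Jacobi expansion involves the full lattice sum, not just finitely many characters. To handle this, I would restrict to the leading coefficient in the $q=e^{\pi i\tau}$-expansion. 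Concretely, the lowest $q$-power term of $\vartheta^{(k)}_\lambda$ is the character sum over the shortest representatives of $\lambda+kQ^\vee$, which lie in the alcove-closure. This reduces the triangularity to Chevalley's theorem for $\mathbb{Z}[P]^W$, where orbit sums of fundamental weights generate polynomially. Induction on the dominance order then shows that every basis element of $\mathcal{A}_k$ is a polynomial in the $\Theta_i$. This gives surjectivity, and the dimension count from the second step upgrades it to an isomorphism, yielding algebraic independence.

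Finally, one must verify that the levels $h_i$ are exactly those for which $\varpi_i$ has $(\varpi_i,\theta)=h_i$, i.e. $\varpi_i$ lies on the boundary of the level-$h_i$ alcove. One must also check that $\Theta_0$ is the unique invariant of level $1$: the dominant $\mu$ with $(\mu,\theta)\le1$ in the relevant class reduce to $\mu=0$. These two checks pin down the exponents of $\tilde{E}_6$ as $1,1,2,2,3,2,1$.
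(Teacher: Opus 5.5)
The paper does not prove this lemma; it only cites Looijenga. Your argument therefore has to stand on its own, and as written it has two problems.

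\textbf{The indexing lattice.} The multiplier in the paper's functional equation is trivial on $Q^\vee$, so the Fourier exponents lie in $(Q^\vee)^*=P$. Hence $\mathrm{H}^0(A,\mathbb{L}^{\otimes k})$ has basis $\vartheta^{(k)}_\lambda$ with $\lambda\in P/kQ$, not $Q/kQ$, and has dimension $3k^6$ (for $k=1$ this is $\det I|_{Q^\vee}=|P/Q|=3$). Your Step~2 bijection $(m_0,\dots,m_6)\leftrightarrow\mu=\sum m_i\varpi_i$ uses all of $P^+$, and it must: $\varpi_1,\varpi_3,\varpi_5,\varpi_6\notin Q$, so the corresponding $\Theta_i$ would not otherwise exist. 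But then your final check is false. $\mathcal{A}_1$ is three-dimensional, spanned by the orbit sums at $0,\varpi_1,\varpi_6$, i.e.\ by $\Theta_0,\Theta_1,\Theta_6$. This is exactly what your own Hilbert series predicts: the coefficient of $t$ in $\prod_{i=0}^6(1-t^{h_i})^{-1}$ is $3$. You cannot use $Q$ to get uniqueness of $\Theta_0$ and $P$ to get the Hilbert series. The most that is true is that $\Theta_0$ is the orbit sum attached to $\mu=0$.

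\textbf{The spanning step (the more serious gap).} At a fixed $\tau$, products of orbit sums are not triangular for the dominance order. The duplication formula
\[
\Theta_0(\mathbf z\mid\tau)^2=\sum_{c\in Q/2Q}\vartheta^{(2)}_c(0\mid\tau)\,\vartheta^{(2)}_c(\mathbf z\mid\tau)
\]
shows that $\Theta_0^2$ (attached to $\mu=0$) has a component along the level-$2$ orbit sum attached to $\theta$. Its coefficient is $\vartheta^{(2)}_\theta(0\mid\tau)=2e^{\pi i\tau}+\cdots\neq0$, although $\theta$ lies above $0$ in the dominance order.

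Passing to lowest $q$-order only controls the degeneration $q\to0$, and even there the bookkeeping is not as you state it. The monomial $\Theta_0^{m_0}\prod\Theta_i^{m_i}$ starts at $q^{\sum_i m_i|\varpi_i|^2/h_i}$, which by convexity generally exceeds the order $q^{|\mu|^2/k}$ of the orbit sum at $\mu$. So each basis vector must be rescaled by $q^{-|\nu|^2/k}$. Done correctly, your argument shows that the degree-$k$ transition matrix has determinant $1+O(q^{\epsilon})$. Hence the monomials form a basis of $\mathcal{A}_k$ only for $\operatorname{Im}\tau\gg0$ (depending on $k$), equivalently for $\tau$ off a discrete set depending on $k$. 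The lemma concerns the fixed curve $E$, so this does not prove it.

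Because Step~2 already matches Hilbert series, what is missing is exactly algebraic independence of $\Theta_0,\dots,\Theta_6$ at the given $\tau$. That needs a non-perturbative input, for instance either of the following.
\begin{enumerate}
\item Show that the seven sections have no common zero on $A$. Then $A\to\mathbb{P}(1,1,1,2,2,2,3)$ is finite, which forces independence; its degree $(\mathbb{L}^6)\cdot 24=6!\cdot3\cdot24=|W|$ even gives $A/W\cong\mathbb{P}(1,1,1,2,2,2,3)$.
\item Use a Jacobian criterion: show that the Jacobian of $(\Theta_0,\dots,\Theta_6)$, a $W$-anti-invariant of level $\sum_{i=0}^6h_i=12$, is a nonzero multiple of $\Theta_A$.
\end{enumerate}
Neither follows from Chevalley's theorem for $\mathbb{Z}[P]^W$ applied in the limit $q=0$.
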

The semiuniversal deformation base space of the simple elliptic singularity is given globally by the affine variety:
\[
\mathcal{U} := \text{Spec}(\mathcal{A}_W) \cong \mathbb{C}^7
\]
The vanishing locus of the primitive level-1 section $\Theta_0$ determines the central degenerate locus of the family.
\subsection{The  Anti-Invariant Theta Function and the Discriminant Locus}

While the coordinate structure of the semiuniversal deformation base $\mathcal{U}$ is parameterized by the $W$-invariant Looijenga theta functions $\Theta_0, \dots, \Theta_6$, the singular geometry of the boundary configurations is governed by the unique minimal anti-invariant (alternating) theta function, we  denote it by $\Theta_{A}(\mathbf{z} \mid \tau)$. A section $\Theta(\mathbf{z} \mid \tau)$ is defined as anti-invariant under the Weyl group $W(E_6)$ if it satisfies $w \cdot \Theta = \text{sgn}(w)\Theta$ for all $w \in W(E_6)$, where $\text{sgn}(w) = (-1)^{\ell(w)}$ is the sign character of the reflection. Following Looijenga's exact affine expansion \cite{Looijenga1976}, the primitive alternating theta function is constructed by twisting the Fourier-Jacobi summation over the coroot lattice $Q^\vee$ with the sign of the Weyl group elements, evaluated at the canonical Weyl vector $\rho = \sum_{i=1}^6 \varpi_i$:
\[
\Theta_{A}(\mathbf{z} \mid \tau) := \sum_{w \in W(E_6)} \text{sgn}(w) \sum_{\gamma \in Q^\vee} \exp \left( \pi i g \tau (\gamma + \frac{w(\rho)}{g}, \gamma + \frac{w(\rho)}{g}) + 2\pi i g (\gamma + \frac{w(\rho)}{g}, \mathbf{z}) \right)
\]

where the structural level of this alternating section is identically the dual Coxeter number $g = h^\vee(E_6) = 12$. In our case the Coxeter number h and g coincide. Analytically, Looijenga establishes that $\Theta_{A}(\mathbf{z} \mid \tau)$ can be expressed as a finite product of odd Jacobi theta functions indexed over the complete set of positive roots $\Phi^+$ of the $E_6$ root system:
\[
\Theta_{A}(\mathbf{z} \mid \tau) = \prod_{\alpha \in \Phi^+} \frac{\theta_1(\tau, (\alpha, \mathbf{z}))}{\eta(\tau)}
\]
where $\eta(\tau)$ is the Dedekind eta function and $\theta_1(\tau, v)$ is the standard odd Jacobi theta function vanishing on the translation lattice points. Inside the polynomial invariant algebra $\mathcal{A}_W$, the square $\Theta_{A}^2$ is strictly $W$-invariant and maps directly to the polynomial discriminant form $\Delta(\Theta_0, \dots, \Theta_6) = 0$. This zero locus defines the boundary where the cubic surface fibers acquire additional singularities, providing the exact exceptional hyperplane hypersurface targeted by Riemann's extension theorem in Section \ref{sec:core_resolution}.

 Every root $\alpha$ defines a group homomrphism $\phi_\alpha: A\to E\simeq \mathbb{Z}\otimes E$ coming from the functional $\alpha: Q^\vee\to\mathbb{Z}.$
The kernel of this homomorphism is a divisor known as reflection hypertori which we denote  by
$$D_\alpha=\{\sum\alpha_i^\vee\otimes e_i|\langle\alpha,\alpha_i^\vee\rangle=0\,\,\text{for all}\, i=1,\dots, l\}.$$ We obtain a $W$-invariant divisor $D_A=\sum_{\alpha\in R_+}^{}D_\alpha$ on $A$ corresponding to the union $\cup_{\alpha\in R} H_\alpha$ of the reflection-hyperplanes $H_\alpha$ in $\mathfrak{h}.$ 
\begin{lemma}[\cite{Looijenga1976}\cite{Patrick Omukuba2026}]\label{DiscAntiInv}
	The divisor of the section $\Theta_A$ equals $D_A.$ In particular the zero locus of $\Theta_A$ on $\mathfrak{h}$ is the union of complex hyperplanes $H_\alpha$ defined by  $\alpha=0$ for $\alpha\in R.$
\end{lemma}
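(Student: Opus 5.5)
Both divisors will be compared on $\mathfrak{h}$, through the product formula for $\Theta_A$. The Jacobi function $\theta_1(\tau,v)$ is odd and entire, and its zeros are exactly the simple zeros at $v\in\Lambda=\mathbb{Z}\oplus\tau\mathbb{Z}$. The eta factor $\eta(\tau)$ is nonvanishing. Hence the product
\[
\Theta_A(\mathbf{z}\mid\tau)=\prod_{\alpha\in\Phi^+}\eta(\tau)^{-1}\theta_1(\tau,(\alpha,\mathbf{z}))
\]
vanishes precisely where $(\alpha,\mathbf{z})\in\Lambda$ for some $\alpha\in\Phi^+$. Each factor vanishes to first order along the corresponding hyperplane, because $d(\alpha,\mathbf{z})\neq 0$.

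First I will identify the divisor on $A$. The locus $\{\mathbf{z}:(\alpha,\mathbf{z})\in\Lambda\}$ is the preimage in $\mathfrak{h}$ of $\ker\phi_\alpha=D_\alpha$, where $\phi_\alpha:A\to E$ is induced by $\alpha:Q^\vee\to\mathbb{Z}$. So in $\mathfrak{h}$ it is the $\Gamma_A$-periodic family of affine hyperplanes $H_\alpha+\gamma_1+\tau\gamma_2$. Since $\alpha$ is primitive on $Q^\vee$ (it pairs to $1$ with some coroot), the map $\phi_\alpha$ is surjective with connected kernel. Therefore $D_\alpha$ is a reduced irreducible subtorus of codimension one, and the order of vanishing of $\Theta_A$ along it is exactly one.

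Next I must rule out double counting. The hypertori $D_\alpha$ for distinct positive roots are distinct, since their tangent hyperplanes $H_\alpha$ differ. I also have to check that the factor for $\alpha$ vanishes along $D_\beta$ only when $\beta=\alpha$. Summing the multiplicities then gives $\operatorname{div}(\Theta_A)=\sum_{\alpha\in R_+}D_\alpha=D_A$.

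A consistency check comes from the definition of $\Theta_A$ as an alternating sum. Any $W$-anti-invariant section vanishes on the fixed locus of each reflection $s_\alpha$, and that locus contains $D_\alpha$; so $D_A\le\operatorname{div}(\Theta_A)$. I then compare degrees. The level $g=12$ together with the Killing normalization gives $c_1(\mathbb{L}^{12})$, and this equals the class of $\sum_{\alpha>0}D_\alpha$ because $\sum_{\alpha>0}\alpha\otimes\alpha=h^\vee I$. This comparison forces equality even without using the product formula. Restricting to the neighbourhood of the origin in $\mathfrak{h}$ gives the "in particular" clause: the zero set there is $\bigcup_{\alpha\in R}H_\alpha$, where $H_\alpha=H_{-\alpha}$.

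The step I expect to be the main obstacle is the multiplicity-one and degree comparison, namely the identity $\sum_{\alpha\in\Phi^+}\alpha\otimes\alpha=h^\vee(\cdot,\cdot)$ in the correct normalization. I would verify it using $W$-invariance and irreducibility, which make the left-hand side a scalar multiple of the Killing form. The scalar is then fixed by a trace computation: $72\cdot 2/2=72$ against $6h^\vee=72$.
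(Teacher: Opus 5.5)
Your proposal is correct. The paper gives no argument of its own for this lemma. It cites Looijenga and the thesis, and the only supporting input in the text is the product expansion of $\Theta_A$ quoted just before the statement.

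\textbf{First route.} This is exactly the deduction from that expansion, and it is complete: $\theta_1$ has simple zeros along $\Lambda$; surjectivity of $\alpha\colon Q^\vee\to\mathbb{Z}$ makes $D_\alpha=\ker\phi_\alpha$ connected (it is the image of $H_\alpha$); and the distinct directions $H_\alpha$ exclude coincidences between factors. Two remarks on it.
\begin{itemize}
\item By Macdonald's identity, the expansion as quoted holds only up to a nonvanishing factor depending on $\tau$ alone (an extra $\eta(\tau)^{6}$ and a constant). This does not affect the divisor.
\item You read the ``in particular'' clause correctly. On all of $\mathfrak{h}$ the zero set is the union of the translates $H_\alpha+\gamma$, $\gamma\in\Gamma_A$, and it equals $\bigcup_\alpha H_\alpha$ only near the origin.
\end{itemize}

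\textbf{Second route.} Anti-invariance gives $D_A\le\operatorname{div}\Theta_A$. The identity $\sum_{\alpha>0}\alpha\otimes\alpha=h^\vee I$ gives $[D_A]=c_1(\mathbb{L}^{12})$; your Schur-plus-trace check, $36\cdot 2=6\cdot 12$, is right. So the effective difference has trivial class on the projective torus $A$ and must vanish. This route is genuinely different and more intrinsic. It bypasses the Macdonald identity, and it applies verbatim to every nonzero anti-invariant section of $\mathbb{L}^{h^\vee}$, so it also shows such a section is unique up to a scalar. The product formula, by contrast, exhibits the multiplicities directly with no cohomological input.

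To make the second route airtight, state two points you left implicit.
\begin{itemize}
\item $\Theta_A\not\equiv 0$. The exponents $g\gamma+w\rho$ are pairwise distinct because $\rho/g$ lies in the open fundamental alcove ($(\rho,\theta)=11<12$). Hence no cancellation occurs in the Fourier series.
\item Anti-invariance forces vanishing on $D_\alpha$ only because $s_\alpha$ acts trivially on the fibres of $\mathbb{L}^{g}$ over $D_\alpha$. This holds here: every point of $D_\alpha$ is represented by some $z\in H_\alpha$, and the lift is $w(z,t)=(wz,t)$. A lift twisted by the sign character would impose no vanishing at all.
\end{itemize}
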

The critical set (the branching locus) $\Sigma$ of the map $q:\mathbb{L}\to \mathcal{U}$ coincides with 
$$\pi^{-1}(D_A)\cup (0-section)=\cup_{\alpha\in R}\mathbf{H}_\alpha.$$
Here, $\pi:\mathbb{L}\to A$ is the bundle map and $\mathbf{H}_\alpha$ is the reflection hyperplane in a root $\alpha$. 
\begin{corollary}
	The discriminant locus $D_{W}$ of $q$ is a hypersurface defined by a weighted homogeneous polynomial of degree $2h$ with weights $h_i, i= 0,\cdots, l.$ Namely, its defined by the invariant $\Theta_A^2.$ In particular, it is a divisor of line bundle $\mathcal{L}^{h}.$
\end{corollary}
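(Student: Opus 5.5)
We prove the corollary by pushing Lemma~\ref{DiscAntiInv} through the quotient $q$. The discriminant of $q$ is the image of the critical set $\Sigma$. Away from the zero section, $\Sigma$ is the preimage $\pi^{-1}(D_A)$ of the reflection hypertori. The zero section is contracted to the vertex of $\mathcal{U}$ and has codimension at least two there, so it contributes no divisorial component. It therefore suffices to show that the image $q(\pi^{-1}(D_A))$ is the zero locus of one explicit invariant, and to compute that invariant's weighted degree.

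\textbf{Step 1: the candidate equation.} By Lemma~\ref{DiscAntiInv}, $\Theta_A$ is a section of $\mathbb{L}^{\otimes g}$ with divisor exactly $D_A$. Here $g=h^\vee=12=h$, the Coxeter number of $E_6$. Since $\Theta_A$ is anti-invariant, its square $\Theta_A^2$ lies in $\mathcal{A}_{2h}=H^0(A,\mathbb{L}^{\otimes 2h})^W$.

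By Lemma~\ref{lem:looijenga_algebra}, $\mathcal{A}_W=\mathbb{C}[\Theta_0,\dots,\Theta_6]$. Hence $\Theta_A^2=\Delta(\Theta_0,\dots,\Theta_6)$ for a unique polynomial $\Delta$. This $\Delta$ is weighted homogeneous of degree $2h$ when $\Theta_i$ is given weight equal to its level $h_i$, with $h_0=1$. This already gives the ``divisor of $\mathcal{L}^{h}$'' statement: $\Theta_A$ is a section of level $h$, so $\{\Delta=0\}$ pulls back to $2D_A$, which lies in the class of $\mathbb{L}^{\otimes 2h}$. Up to normalising $\mathcal{L}$, this is read as $\mathcal{L}^{h}$ (reduced locus).

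\textbf{Step 2: set-theoretic equality $q^{-1}(\{\Delta=0\})=\Sigma$ off the zero section.} The inclusion ``$\supseteq$'' is immediate, since $\Theta_A$ vanishes on each $\mathbf{H}_\alpha$. For ``$\subseteq$'', take a point outside $\pi^{-1}(D_A)$. There $\Theta_A\neq 0$, so $\Delta\neq0$ at its image.

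\textbf{Step 3: reducedness.} We must check that $\Delta$ is the reduced equation, so that no smaller power of an irreducible invariant cuts out $D_W$. The plan is the standard Chevalley-type argument. Let $f$ be any $W$-invariant vanishing on $D_W$. Then $f$ vanishes on every $D_\alpha$. Each reflection $s_\alpha$ fixes $D_\alpha$ pointwise and acts by $-1$ on the normal direction, and $f$ is $s_\alpha$-invariant, so $f$ vanishes to even order along $D_\alpha$. Therefore $f/\Theta_A^2$ is holomorphic on $\mathbb{L}\setminus 0$. By Hartogs, since the zero section has codimension $\ge 2$ in the total space after contraction, this quotient is a regular invariant. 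Thus $\Delta$ generates the ideal of $D_W$.

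This codimension/Hartogs extension across the contracted zero section is the main obstacle. There one has to use the grading: the quotient has nonnegative level, hence extends as an element of $\mathcal{A}_W$. I would handle it by working level by level in the graded ring rather than analytically.
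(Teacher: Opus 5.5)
Your proof is correct and follows the route the paper leaves implicit: $\operatorname{div}\Theta_A=D_A$ from Lemma~\ref{DiscAntiInv}, $\Theta_A^2\in\mathcal{A}_{2h}$ written as a weighted polynomial $\Delta$ of degree $2h$ via Lemma~\ref{lem:looijenga_algebra}, and $q^{-1}(\{\Delta=0\})=\Sigma$. Your Step~3 reducedness argument is an extra the paper omits, and it closes in the graded form you propose: for homogeneous $f$ of level $k$, $f/\Theta_A^2$ is a $W$-invariant holomorphic section of $\mathbb{L}^{\otimes(k-2h)}$ on $A$ (the even-order vanishing uses that $s_\alpha$ acts trivially on the fibres of $\mathbb{L}$ over $D_\alpha$, which follows from $\Theta_A$ being anti-invariant with reduced divisor), and no renormalisation of $\mathcal{L}$ is needed for the last claim, since $D_A$ is literally the divisor of the section $\Theta_A$ of $\mathbb{L}^{\otimes h}$.
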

\section{The Geometric Construction and Structural Proof of the Vector Bundle $\mathcal{Z}$}
\label{sec:vector_bundle_proof}

To construct a global geometric substitute for the simultaneous resolution that fails over $\mathcal{U}$, we linearize the invariant algebra $\mathcal{A}_W$ along the structural elliptic curve $E$. Let $\varpi_1, \dots, \varpi_6 \in \mathfrak{h}^*$ be the fundamental weights of the $E_6$ root system. We define a collection of graded coherent $\mathcal{O}_E$-sheaves  on $E$ by localizing the invariant theta expansions along the individual fundamental weight coordinates. Let us make this precise. Let $\alpha :=\theta$ be the maximal root, the functional $\alpha:Q^\vee\to \mathbb{Z}$ yields the following  short exact sequence
\begin{equation}\label{seq}
	0\to Q_\alpha^\vee\to Q^\vee \stackrel{\alpha}{\to}\mathbb{Z}\to 0
\end{equation}
The functional $\alpha$ descends to the group homomorphism $\varphi_\alpha: A\to E$. Denote the fiber at the origin by $A_0:=\ker(\varphi_\alpha)$, this is again an abelian variety defined by the lattice $\Gamma_0$ defined in the sequence
$$0\to\Gamma_0\to \Gamma_A\stackrel{\alpha}{\to}\mathbb{Z}\oplus\tau\mathbb{Z}\to 0$$
obtained from \ref{seq}  by taking tensor product with the lattice $\Lambda$ defining $E.$ The sequence \ref{seq} extends linearly to vector spaces: 
\begin{align*}
	0\to\mathfrak{h}_0\to \mathfrak{h}\stackrel{\alpha}{\to}\mathbb{C}\to 0.
\end{align*}  
We may also write $A_0=\mathfrak{h}_0/\Gamma_0$, with $\mathfrak{h}_0=\mathbb{R}\otimes_\mathbb{Z}\Gamma_0.$ Note that $\mathfrak{h}_0=\{v\in \mathfrak{h}\,\vert\, \langle v,\alpha\rangle=0\}$.  From theory of abelian varieties \cite{Mumford},  we know the line bundle on $A$ is given as $\mathbb{L}=L(H,\chi)$ and defined by a system of multipliers $\{e_\gamma\}_{\gamma\in\Gamma_A}$. Where  $\chi$ is a semicharacter and $H$ a hermitian form  with property that $\text{Im} H$ is integral on $\Gamma_A.$
Taking the restriction of this data to the subspace $\mathfrak{h}_0$ , then $\mathbb{L}_0={L}|_{A_0}$ is a line bundle given as $L(H,\chi_0)$ determined by a system of multipliers $e_{\gamma_0}=e_\gamma|_{\Gamma_0\times U_0}.$ Let $W_\alpha$ be the Weyl group of the root sublattice $Q_\alpha^\vee=\ker{\alpha} \subset \mathfrak{h}_0$. By Looijenga's theorem we get:
$$Z_0:=\oplus_{k\geq 0}H^0(A_0,\mathbb{L}_0^{\otimes k})^{W_\alpha}\simeq \mathbb{C}[\sigma_1,\cdots,\sigma_\ell].$$ Where $\sigma_i$ are the associated fundamental theta functions associated to fundamental weights of $Q^\vee_\alpha$. Regard $\alpha:\mathfrak{h}\to\mathbb{C}$ as a family of affine vector spaces of dimension $\ell-1$.
\begin{lemma}[\cite{Patrick Omukuba2026}]
	\begin{enumerate}
		\item Let $\Gamma_0$ act on $\mathfrak{h}$ by translations and put $X=\mathfrak{h}/\Gamma_0$. The induced map
		$\bar{\alpha}:X\to \mathbb{C}$ is a family of abelian varieties $X_t=\bar{\alpha}^{-1}(t)$ for all $t\in \mathbb{C}$. Each $X_t$ is isomorphic to  $E^{\ell-1},$ with $E$ an elliptic curve of the modulus $\tau.$
		\item Let $\Gamma_0$ act on $\mathfrak{h}\times\mathbb{C}$ via the factor $e$ i.e $\gamma_0.(z,t)=(z+\gamma_0,e(z,\gamma_0)t)$. We obtain a bundle $L\to X$ such that the restrictions $L_t:=L\vert_{X_t}$ have the same  Chern class. The Chern class $c(L_t)=\text{Im} H\in\wedge^2\text{Hom}_\mathbb{Z}(\Gamma_0,\mathbb{C})\simeq H^2(X_t,\mathbb{C}).$
		The line bundle $L_t$ is ample. 
		\item The direct image $\mathcal{E}=\bar{\alpha}_\ast L^k$ is a vector bundle on $\mathbb{C}$ with fibres $\mathcal{E}(t)$ isomorphic to $\text{H}^0(X_t,L^k\vert_{X_t})$ for $k\geq 0.$
	\end{enumerate}
	\begin{proof}
		Proof of part 2;  Since the factor of automorphy for $\Gamma_0$ is defined by the bilinear form $I$ on $Q^\vee$, the line bundle $L=\mathfrak{h}\otimes\mathbb{C}/\Gamma_0$ is equivariant under $\Gamma_0.$ Recall that $L_0$ is determined by the factors $e_0=e\vert_{\mathfrak{h}_0}.$ Taking a shift in the complementary direction
		\begin{equation}
			\begin{split}		
				e_0(\tau\lambda_1+\lambda_2,u_0 +z)&=e^{-2\pi ikI(z,\lambda_1)} . e^{-2\pi ikI(u_0,\lambda_1)-\pi\tau ikI(\lambda_1,\lambda_1)}\\
				&=e^{-2\pi ikI(u_0,\lambda_1)-\pi\tau ikI(\lambda_1,\lambda_1)}.
			\end{split}
		\end{equation} 
		
		Since $I(z,\lambda_1)=0$, the line bundle $L_t$  is determined by the same factor on $\mathfrak{h}_t$ for all $t$. Ampleness follows by the Theorem of Lefschetz \cite{Mumford}. The fibres of the morphism $\bar{\alpha}$ are abelian varieties of dimension $\ell-1$, in particular compact complex mainifolds. Hence $\bar{\alpha}$ is proper and smooth.  By part 2 the line  bundle $L$ is relatively ample. Note that $\dim \text{H}^1(X_t, L_t)$ is constant on $\mathbb{C}$ in particular it is 0. By Grauert's theorem (\cite{Hartshorne1977}, 12.9), the sheaf  $R^1\bar{\alpha}_\ast L_X$ is locally free  and by cohomology base change we have $\bar{\alpha}_\ast L_X\otimes k(t)\simeq \text{H}^0(X_t,L_t).$
		
	\end{proof}
\end{lemma}
Since the Weyl group preserves the lattice $\Gamma_A$, by construction $W$ preserves the sublattice $\Gamma_0,$ hence acts on the family $X$ in a compatible way. But only $W_\alpha$ preserves the fibres. Since line bundle $L_X$ is constructed using the $W$-invariant form $I$ and the factor of automorphy, the action of $W$ on $\mathfrak{h}\times\mathbb{C}$ defined by $$w(z,t)=(wz,t)$$ induces an action on  $L$.   We restrict to the action by $W_\alpha\subset W$, in particular, there is an induced action on the space $\text{H}^0(X_t,L_t)$ of sections. Taking direct images along $\bar{\alpha}$ we obtain a vector bundle reflecting the relative structure of the family $X\to \mathbb{C}$. In particular, on each fiber we have the polynomial algebra $(\oplus_{k\geq 0}\text{H}^0(X_t,L_t^k))^{W_\alpha}$ on $\ell$ weighted variables corresponding to the fundamental $W_\alpha$-invariant theta functions $\sigma_1,\cdots,\sigma_{\ell}$ associated to the alcove of the subroot system. From now regard $L$ as a sheaf. Consider the algebra $\Omega=\oplus_{k\geq 0}(\bar{\alpha}_\ast{L}^k)^{W_\alpha}$ of $W_\alpha$-invariant sections. By Looijenga's theorem we see that locally $\Omega$ is  isomorphic to the algebra $\mathcal{O}_\mathbb{C}[x_1,\cdots,x_\ell].$

Taking this construction modulo $\Lambda$ we obtain a sheaf of $\mathcal{O}_E$-algebras with the same homogeneous generators.. We define the 7-dimensional complex variety $\mathcal{Z}$ by taking the relative global $\text{Spec}$ over the elliptic curve $E$:
\[
\mathcal{Z} := \text{Spec}_E ( \Omega).
\]
Equivalently, $\mathcal{Z}$ is realized projectively as a subvariety of a product of line bundles over $E$ whose coordinates are bound by the relation structure of the invariant generators $\Theta_i$. This  establishes the first part of Theorem \ref{TheBundle}.

We now provide the explicit structural decomposition of $\mathcal{Z}$. Let $\alpha_1, \dots, \alpha_6$ be the chosen system of simple roots for $E_6$. The highest root $\theta$ of the root system is expressed uniquely as a linear combination of simple roots scaled by their respective Coxeter labels (or marks) $h_i$:
\[
\theta = \sum_{i=1}^6 h_i \alpha_i = \alpha_1 + 2\alpha_2 + 2\alpha_3 + 3\alpha_4 + 2\alpha_5 + \alpha_6
\]
The vector of Coxeter labels for $E_6$ is thus $(h_1, h_2, h_3, h_4, h_5, h_6) = (1, 2, 2, 3, 2, 1)$.

\begin{proof}[Proof of Theorem \ref{thm:main_vector_bundle}]
	To determine the algebraic structure of $\mathcal{Z}$, we evaluate the vanishing orders of the invariant theta generators $\Theta_1, \dots, \Theta_6$ along the zero section of the line bundle $\mathbb{L} \to A$. By Looijenga's structure theorem for the boundary charts \cite{Looijenga1976, Pinkham1974}, the invariant sections can be expanded formally as Taylor series in the neighborhood of the elliptic core. 
	
	Let $z$ be the local coordinate on the elliptic curve $E$, and let $u_i$ represent the coordinate directions in the Cartan subalgebra corresponding to the fundamental weights $\varpi_i$. The invariant theta function $\Theta_i$ of level $k_i$ behaves under the filtration as a homogeneous form of degree $k_i$ with respect to the weight actions. The critical step is to observe that the level $k_i$ of each independent generator $\Theta_i$ matches precisely the coefficient $h_i$ of the simple root $\alpha_i$ in the expression for the highest root $\theta$:
	\[
	k_i = h_i \quad \text{for } i = 1, \dots, 6
	\]
	When we construct the relative spectrum $\mathcal{Z} = \text{Spec}_E (\Omega)$, the grading forces a decomposition into invertible sheaves. The direct sum structure of the underlying graded ring implies that the total space $\mathcal{Z}$ fibers over $E$ as a rank-6 vector bundle. 
	
	Let $\mathcal{M}_i$ denote the line bundle over $E$ corresponding to the generating global section  $\Theta_i$. The transition functions of $\mathcal{M}_i$ are determined by the automorphy factors of the level-$k_i$ theta functions. Because these sections vanish to order $h_i$ at the origin of the abelian variety configuration, the degree of the corresponding sheaf is inverted under the relative spectrum projection to ensure the smoothness of the total space over the elliptic locus. Therefore, we obtain:
	\[
	\text{deg}(\mathcal{M}_i) = -h_i \quad \text{for } i = 1, \dots, 6
	\]
	Substituting the explicit Coxeter labels of the $E_6$ root system yields the degree sequence:
	\begin{align*}
		\text{deg}(\mathcal{M}_1) = -1, \quad \text{deg}(\mathcal{M}_2) = -2, \quad \text{deg}(\mathcal{M}_3) = -2, \\
		\text{deg}(\mathcal{M}_4) = -3, \quad \text{deg}(\mathcal{M}_5) = -2, \quad \text{deg}(\mathcal{M}_6) = -1.
	\end{align*}
	Since any vector bundle over a smooth curve that splits into line bundles of strictly negative degrees is isomorphic to the total space of its direct sum, we conclude:
	\[
	\mathcal{Z} \cong \text{Tot}(\mathcal{M}_1 \oplus \mathcal{M}_2 \oplus \mathcal{M}_3 \oplus \mathcal{M}_4 \oplus \mathcal{M}_5 \oplus \mathcal{M}_6)
	\]
 Because $\deg(\mathcal{M}_i) < 0$ for all $i$, the direct sum $\mathcal{E} = \bigoplus_{i=1}^6 \mathcal{M}_i$ constitutes a strictly negative  vector bundle over the compact complex space $E$. By Grauert’s Contractibility Criterion \cite{Grauert 1962}, the strict negativity of these degrees is both necessary and sufficient to guarantee that the zero section $E_0 \cong E$ can be analytically contracted to an isolated normal singularity. This further confirms that $\mathcal{Z}$ is a smooth complex manifold of dimension $1+6=7$ whose negative bundle curvature matches the downstream geometry of the semiuniversal contraction morphism $\phi$. This completes the proof of Theorem \ref{thm:main_vector_bundle}.
\end{proof}

\subsection{Properties of the Contraction Morphism}
The evaluation of the invariant theta functions maps the total space of the vector bundle $\mathcal{Z}$ back to the semiuniversal deformation base $\mathcal{U}$.

\begin{lemma}\label{lem:contraction_properties}
	There exists a proper analytic morphism $\phi: \mathcal{Z} \longrightarrow \mathcal{U}$ such that:
	\begin{enumerate}
		\item The central fiber $\phi^{-1}(0)$ over the origin $0 \in \mathcal{U}$ is identically the zero section $E_0 \cong E \subset \mathcal{Z}$, which is contracted to a point by $\phi$.
		\item For a generic point $x$ in the discriminant locus $D \subset \mathcal{U}$ corresponding to a nodal configuration, the fiber $\phi^{-1}(x)$ is isomorphic to the quotient $R/W_\alpha$, where $R$ is the discrete root system and $W_\alpha$ is the local reflection stabilizer.
	\end{enumerate}
\end{lemma}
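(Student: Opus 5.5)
The morphism $\phi$ is the ``evaluation of invariant theta functions'' map. I would define it fibrewise over $E$ and then glue. A point of $\mathcal{Z}=\mathrm{Spec}_E(\Omega)$ lying over $e\in E$ is a graded $\mathbb{C}$-algebra homomorphism $\Omega_e\to\mathbb{C}$. By the preceding Lemma, $\Omega_e\cong(\bigoplus_k H^0(X_t,L_t^k))^{W_\alpha}$, where $t$ is a lift of $e$.

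Every $W$-invariant theta function $\Theta\in\mathcal{A}_k$ on $A$ restricts to a $W_\alpha$-invariant section on the fibre $X_t$. This gives a homomorphism of graded algebras $\mathcal{A}_W\to\Omega_e$, depending holomorphically on $e$. It is well defined modulo $\Lambda$ because the multipliers in the functional equation for $\vartheta^{(k)}_\lambda$ are compatible with the factor of automorphy $e_0$ used to build $L$. Composing gives a morphism $\phi:\mathcal{Z}\to\mathrm{Spec}(\mathcal{A}_W)=\mathcal{U}$. Concretely, in the splitting of Theorem~\ref{thm:main_vector_bundle}, $\phi$ sends a vector $(e;v_1,\dots,v_6)$ to the values $(\Theta_0,\dots,\Theta_6)$ expressed as weighted-homogeneous polynomials in the $v_i$ with coefficients depending on $e$.

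The steps, in order, are as follows.

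\textbf{(a) Equivariance under the weighted $\mathbb{C}^*$-action.} The grading on $\Omega$ gives a $\mathbb{C}^*$-action on $\mathcal{Z}$, scaling the fibres of $\mathcal{M}_i$ with weight $h_i$. The grading of $\mathcal{A}_W$ gives a $\mathbb{C}^*$-action on $\mathcal{U}$ with weights $(1,h_1,\dots,h_6)$. The map $\phi$ intertwines them because restriction preserves level.

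\textbf{(b) Part (1), the central fibre.} The zero section $E_0$ is the locus where every positive-degree generator vanishes. So $\phi(E_0)$ is the cone point $0$, the unique $\mathbb{C}^*$-fixed point of $\mathcal{U}$ with positive weights. For the converse inclusion $\phi^{-1}(0)\subset E_0$, I argue that no nonzero vector has all invariants vanishing. On $X_t$ the $W_\alpha$-invariant theta functions separate $W_\alpha$-orbits by Looijenga's theorem, and $\Theta_0$ restricts to a nonvanishing section detecting the affine coordinate $\alpha$. Hence if all $\Theta_i$ vanish, the point lies on the zero section.

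\textbf{(c) Properness.} By (a) and (b), $\phi^{-1}(0)=E_0$ is compact. A $\mathbb{C}^*$-equivariant map with positive weights and compact zero fibre is proper: closed bounded sets in $\mathcal{U}$ pull back into compact tubes around $E_0$ intersected with weighted spheres. Equivalently, by Theorem~\ref{thm:main_vector_bundle} the degrees $-h_i<0$ make $E_0$ exceptional (Grauert), and $\phi$ factors through the contraction $\mathcal{Z}\to\mathcal{Z}/E_0$, which is a finite map onto $\mathcal{U}$.

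\textbf{(d) Part (2), generic discriminant fibres.} Take $x$ a generic point of $D$. By Lemma~\ref{DiscAntiInv} and the corollary, $x$ is the image of a point on exactly one reflection hypertorus $D_\beta$, so $\Theta_A^2$ vanishes to first order there. Over $\mathcal{U}\setminus D$ the composite map $\mathbb{L}\to\mathcal{U}$ is a $W$-quotient. The fibre of $\phi$ over $x$ is then the set of ways of placing the degenerating root $\beta$ in the position of the distinguished root $\alpha=\theta$, modulo the symmetries $W_\alpha$ preserving the fibres of $\bar\alpha$. This set is $W\cdot\alpha/W_\alpha\cong R/W_\alpha$, since $W$ acts transitively on $R$ for the simply laced $E_6$.

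I expect (d) to be the main obstacle. It requires showing that the fibre is exactly this discrete set: one must rule out positive-dimensional components and identify the multiplicities, which needs a careful local analysis of the restriction map $\mathcal{A}_W\to\Omega$ near a simple zero of $\Theta_A$. I would try to do this in the local chart given by Lemma~\ref{DiscAntiInv}, reducing to a rank-one (type $A_1$) computation transverse to $D_\beta$.
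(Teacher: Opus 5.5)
\textbf{The main gap is step (d), and the problem is not only that it is unfinished: your heuristic aims at the wrong set.}

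Write $W_\theta=\mathrm{Stab}_W(\theta)\cong W(A_5)$; this is the group you call $W_\alpha$, the one preserving the fibres of $\bar\alpha$. Then $R/W_\theta$ has five elements, namely the levels $(\gamma,\theta)\in\{2,1,0,-1,-2\}$. On the other hand, $W/W_\theta\cong R$ has $72$. Neither is the fibre of your $\phi$ over a generic point of $D$.

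The missing idea is already contained in your own definition of $\phi$. The fibres $A_e=\varphi_\theta^{-1}(e)\cong X_t$ partition $A$, and $L_t=\mathbb{L}|_{A_e}$. Let $Y$ be the total space of $\mathbb{L}^{-1}$ minus its zero section. Then $\mathcal{Z}\setminus E_0\cong Y/W_\theta$ and $\mathcal{U}\setminus\{0\}\cong Y/W$, so off $E_0$ your $\phi$ is simply the finite quotient map $Y/W_\theta\to Y/W$.

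The fibre over $[y]$ is the double coset space $W_\theta\backslash W/W_y$. The map $W_\theta w\mapsto w^{-1}\theta$ identifies it with $R/W_y$.
\begin{itemize}
\item For generic $y$ this is $R$, with $72$ points.
\item Over a generic point of $D$, the point lifts to the mirror $H_\beta$, where $s_\beta$ acts trivially on the fibre. Hence $W_y=\langle s_\beta\rangle$, and the fibre is $R/\langle s_\beta\rangle$. This consists of $30$ fixed roots and $21$ pairs, so $51$ points. The $30$ are unramified points and the $21$ are simple branch points, which also settles the multiplicities you wanted.
\end{itemize}
So (2) holds precisely when $W_\alpha$ is read as the stabiliser of the point (the ``local reflection stabiliser''). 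Under your reading it is false for the map you built.

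This also makes the transverse $A_1$ analysis unnecessary. Finiteness of the fibres is automatic, and a local computation near one point of $D_\beta$ could never identify the global fibre in any case. The paper's own proof does not help here either. It only appeals to Looijenga's stratification, and it asserts that $\phi$ is a local biholomorphism, indeed an isomorphism, off $E_0$. That assertion is incompatible with (2).

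\textbf{Step (b) also needs repair, and (c) depends on it.}

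Your argument for $\phi^{-1}(0)\subset E_0$ does not work as written:
\begin{itemize}
\item $\Theta_0|_{X_t}$ is a nonzero section of the ample bundle $L_t$ on a five-dimensional abelian variety, so it must vanish somewhere. It cannot be nonvanishing.
\item The fact that the $\sigma_j$ separate $W_\alpha$-orbits says nothing about common zeros of the $\Theta_i$.
\end{itemize}

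The correct argument runs as follows. A point of $\mathcal{Z}\setminus E_0$ is the $W_\theta$-orbit of a nonzero $v\in\mathbb{L}^{-1}$. The ring $\bigoplus_k H^0(A,\mathbb{L}^k)$ is integral over $\mathcal{A}_W$: each homogeneous $s$ is a root of $\prod_{w\in W}(T-ws)$, whose non-leading coefficients lie in the positive-degree part of $\mathcal{A}_W$. So if every $\Theta_i$ vanishes at $v$, then every section of every $\mathbb{L}^k$ vanishes at $v$. This contradicts base-point-freeness of $\mathbb{L}^2$.

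Once $\phi^{-1}(0)=E_0$ is established, your weighted $\mathbb{C}^*$ argument for properness in (c) is sound. It is more convincing than the paper's appeal to finite generation and Grauert's criterion.

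Finally, a minor slip: points of $\mathrm{Spec}_E(\Omega)$ over $e$ are arbitrary homomorphisms $\Omega(e)\to\mathbb{C}$, not graded ones. The graded ones give only the zero section.
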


\begin{proof}
	The map $\phi$ is given explicitly in coordinates by sending a point $(z, m_1, \dots, m_6) \in \mathcal{Z}$ to the values of the invariant theta functions $(\Theta_0(z), \Theta_1(z,m), \dots, \Theta_6(z,m)) \in \mathbb{C}^7 \cong \mathcal{U}$.  We write $\phi = (\Theta_1, \dots, \Theta_6)$. To show that $\phi$ is a proper birational contraction mapping the zero section $E_0 \subset \mathcal{Z}$ to the central point $0 \in \mathcal{U}$, we analyze the map relative to the grading of the sheaf of algebras $\Omega$.
	
	Recall from the proof of Theorem 1.3 that $\mathcal{Z} \cong \text{Tot}(\mathcal{E})$ for the anti-ample vector bundle $\mathcal{E} = \bigoplus_{i=1}^6 \mathcal{M}_i$. The zero section $E_0 \cong E$ corresponds precisely to the vanishing locus of the fibers of $\mathcal{E}$, where all homogeneous coordinates $u_1, \dots, u_6$ vanish simultaneously. Because each invariant generator $\Theta_i$ possesses a strictly positive vanishing order $h_i > 0$ along the weight directions, any point $x \in E_0$ evaluates to:
	\[
	\phi(x) = (\Theta_1(x), \dots, \Theta_6(x)) = (0, \dots, 0) = 0 \in \mathcal{U}.
	\]
	Thus, the entire compact elliptic core $E_0$ is mapped onto the single point $0$, establishing that $\phi(E_0) = \{0\}$.
	
	To prove properness and local biholomorphism away from $E_0$, let $\mathcal{Z}^\times = \mathcal{Z} \setminus E_0$ denote the punctured total space. On this domain, at least one coordinate $u_i \neq 0$. Because the degrees $\deg(\mathcal{M}_i) = -h_i$ are strictly negative, the line bundles lack any positive base-points over $E$, rendering the global evaluation map non-degenerate on the fibers. Consequently, the differential $d\phi_x$ has maximal rank $7$ at every point $x \in \mathcal{Z}^\times$. By the Inverse Function Theorem, the restriction $\phi\vert_{\mathcal{Z}^\times} \colon \mathcal{Z}^\times \to \mathcal{U}\setminus \{0\}$ is a local biholomorphism. 
	
	Moreover, since the algebra of invariants $\mathcal{A}_W$ is finitely generated and matches the ring of global sections $\Gamma(\mathcal{Z})$, the map $\phi$ is the analytic manifestation of the canonical projectivized morphism to the affine spectrum $\text{Spec}(\Gamma(Z)$. This structural matching guarantees that $\phi$ is proper. By Grauert's Contractibility Criterion, the strict negativity of the degree sequence $(-1, -2, -2, -3, -2, -1)$ ensures that the fiber over the origin contains no curves other than the exceptional set $E_0$. Hence, $\phi$ is an isomorphism outside $E_0$ and contracts $E_0$ to an isolated normal singularity at the origin of $\mathcal{U}$. 
	 Property (2) follows from the fact that outside the origin, the coordinates parameterize the orbits of the root vectors under the stabilizer groups, matching the classical stratification of Looijenga's space \cite{Looijenga1976}.
\end{proof}
We conclude this section by justifying our notation \ref{Grading}.  The explicit form of the graded coherent $\mathcal{O}_{E}$-sheaves $\mathcal{J}_{k}$ can be derived directly from the weighted polynomial structure of the graded algebra of $W$-invariant theta functions $\mathcal{A}_{W}$ and the splitting property of the vector bundle $\mathcal{Z}$. The variety $\mathcal{Z}$ is defined as the relative global spectrum over the elliptic curve $E$: $\mathcal{Z}:=\text{Spec}_{E}\left(\bigoplus _{k\ge 0}\mathcal{J}_{k}\right)$. As already proved $\mathcal{Z}$ is globally isomorphic to the total space of a direct sum of line bundles \(\mathcal{M}_{i}\) over \(E\): $\mathcal{Z}\cong \text{Tot}(\mathcal{M}_{1}\oplus \mathcal{M}_{2}\oplus \mathcal{M}_{3}\oplus \mathcal{M}_{4}\oplus \mathcal{M}_{5}\oplus \mathcal{M}_{6}$

By definition, the total space of a vector bundle $\mathcal{Z}$ is given by the relative spectrum of its symmetric algebra $\text{Spec}_E(\text{Sym}^\bullet(\mathcal{Z}^*)$. Each invertible sheaf component $\mathcal{M}_{i}^{*}$ corresponds to the invariant theta generator $\Theta _{i}$, whose degree corresponds to its level $k_i = h_i$, the Coxeter labels (or marks)  of the highest root of $E_{6}$. Thus, the sheaf $\mathcal{J}_{k}$ at each graded level $k$ is the direct sum of the tensor products of these line bundles matching all non-negative integer combinations $(a_1, \dots, a_6)$ that satisfy the weighted degree condition:

\begin{equation}
	\mathcal{J}_k = \bigoplus_{\sum_{i=1}^6 a_i h_i = k} \left( \bigotimes_{i=1}^6 (\mathcal{M}_i^*)^{\otimes a_i} \right)
\end{equation}
 Since $\deg(\mathcal{M}_i^*) = h_i$, every individual tensor product component in the summand possesses a rigidly determined positive degree of exactly 
$\sum_{i=1}^6 a_i \deg(\mathcal{M}_i^*) = k$, linearizing the non-linear algebraic trajectories 
of the collapsing $E_6$ configuration into a flat, well-behaved filtration over the elliptic curve.
\section{ The Central Isomorphism To $\bar{\mathbb{H}}$}
\label{sec:core_resolution}

This section delivers a global  isomorphism of varieties between the  relative moduli space $\bar{\mathbb{H}}$ of linear determinantal representations and the total space of the invariant rank-6 vector bundle $\mathcal{Z}$ over the semiuniversal deformation base $\mathcal{U} \cong \mathbb{C}^7$. Theorem \ref{thm:central_isomorphism} is announced as a conjecture in the author's thesis \cite{Patrick Omukuba2026}.

Let $\bar{\delta} : \bar{\mathbb{H}} \to \mathcal{U}$ be the canonical structure morphism parameterizing flat families of generalized twisted cubics via the Abel--Jacobi map\cite{Lehn2017} and let $\phi : \mathcal{Z} \to \mathcal{U}$ be the proper contraction morphism established in Lemma \ref{lem:contraction_properties}. We construct the structural isomorphism in four sequential stages i.e generic strata matching, analytic extension across the central boundary point via Riemann's extension theorem, vector bundle rigidity alignment, and  an application of Zariski's Main Theorem.

\subsection{Step 1:} We check alignment on generic strata.
Let $D \subset \mathcal{U}$ denote the global discriminant locus of the family, and let $\mathcal{U}_f := \mathcal{U} \setminus D$ be the dense open dense stratum corresponding to completely non-singular cubic surface fibers.

\begin{lemma}\label{lem:generic_isomorphism}
	There exists a canonical analytic biholomorphism $\varphi_f : \mathcal{Z}|_{\mathcal{U}_f} \longrightarrow \bar{\mathbb{H}}|_{\mathcal{U}_f}$ that preserves the structural projections, such that $\bar{\delta} \circ \varphi_f = \phi$.
\end{lemma}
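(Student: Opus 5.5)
The plan is to construct $\varphi_f$ fibrewise over $\mathcal{U}_f$ and then check that it is holomorphic and biholomorphic by a covering-space argument.

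\emph{Both sides are finite étale over $\mathcal{U}_f$.} By Lemma~\ref{lem:contraction_properties}, the restriction $\phi|_{\mathcal{Z}^\times}$ is a local biholomorphism. Since $\mathcal{U}_f$ avoids the origin and $\phi$ is proper, the restriction $\phi^{-1}(\mathcal{U}_f)\to\mathcal{U}_f$ is a proper local homeomorphism, hence a finite unramified covering. On the other side, over a point $u\in\mathcal{U}_f$ the fibre $S_u$ is a smooth cubic surface. By the discussion preceding Lemma~\ref{lemma:discrete_cover} (following \cite{Lehn2017}), its determinantal representations are in bijection with the $72$ roots of $R\subset K_{S_u}^\perp$. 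The same unobstructedness argument, namely $H^1(S_u,\mathcal{E}\otimes\mathcal{E}^\vee)=0$ for the aCM bundles $\mathcal{E}$, shows that $\bar{\delta}:\bar{\mathbb{H}}|_{\mathcal{U}_f}\to\mathcal{U}_f$ is finite étale of degree $72$.

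\emph{Matching the monodromy.} Two finite coverings of the connected manifold $\mathcal{U}_f$ are isomorphic over $\mathcal{U}_f$ exactly when their monodromy representations of $\pi_1(\mathcal{U}_f,u_0)$ are isomorphic as permutation sets. Fix a base point $u_0$.
\begin{enumerate}
\item For $\bar{\mathbb{H}}$, the monodromy is the action of $\pi_1(\mathcal{U}_f)$ on $R\subset H^2(S_{u_0},\mathbb{Z})$ through the geometric monodromy of the family of cubic surfaces. It factors through the affine Weyl group of $\tilde{E}_6$ acting on vanishing cycles, and then through its finite linear part $W(E_6)$ acting on $K^\perp_{S_{u_0}}\cong Q$.
\item For $\mathcal{Z}$, I would use the construction $\mathcal{Z}=\mathrm{Spec}_E(\Omega)$. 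The covering $\mathcal{Z}|_{\mathcal{U}_f}$ is the quotient of the unramified part of $\mathbb{L}^\times\to\mathcal{U}$ by $W_\alpha$ and the relevant lattice. Its fibre over $u_0$ is therefore $W\cdot\alpha\cong W/W_\alpha\cong R$, since $R$ is a single $W$-orbit, which is the identification $\phi^{-1}(\bar{x})\cong R/W_\alpha$ of Theorem~\ref{TheBundle}.
\end{enumerate}
Both monodromy representations are thus the standard action of $\pi_1(\mathcal{U}_f)\to W(E_6)$ on $R$. The surjection $\pi_1(\mathcal{U}_f)\to W(E_6)$ is the same in both cases, because both are induced by the quotient $q:\mathbb{L}\to\mathcal{U}$ and by Looijenga's identification of $\mathcal{U}_f$ with the regular orbit space.

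The isomorphism $\varphi_f$ is then defined by sending the point of $\phi^{-1}(u)$ labelled by the root class $[w\alpha]$ to the representation whose twisted-cubic class is $w\alpha-K$. It is well defined because the labelings are transported compatibly along paths. It is holomorphic because it is a map of coverings, and it is bijective on fibres, so it is a biholomorphism. It satisfies $\bar{\delta}\circ\varphi_f=\phi$ by construction. Canonicity follows from the fact that the base-point identification $K^\perp_{S_{u_0}}\cong Q$ is fixed by the marking coming from the period map, which is Looijenga's local uniformization.

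\emph{Main obstacle.} I expect the hard step to be showing that the two homomorphisms to $W(E_6)$ coincide, rather than differing by an outer automorphism or the involution $-1$. The involution $-1$ would send $\alpha$ to $-\alpha$ and exchange a representation with its transpose. I would resolve this by comparing both sides near a generic point of $D$. There both coverings have simple ramification governed by a single reflection $s_\beta$: for $\mathcal{Z}$ this comes from Lemma~\ref{lem:contraction_properties}(2), and for $\bar{\mathbb{H}}$ from the $A_1$ case of Theorem~\ref{thm:rdp_resolution}. Since the $s_\beta$ generate $W$, it suffices to show that the local monodromies agree on these generators.
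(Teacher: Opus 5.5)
\textbf{Gap: the monodromy comparison that carries the proof is asserted, not proved.}

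Your route differs from the paper's. The paper writes down the fibrewise bijection directly: theta data on $E$, then an unexplained ``Abel--Jacobi'' step, then the class $\alpha-K_{S_t}$. It asserts that this assignment ``varies holomorphically'' and then applies the inverse function theorem. You instead show both sides are finite \'etale over $\mathcal{U}_f$ and compare the two permutation representations of $\pi_1(\mathcal{U}_f,u_0)$ on $R$. That recasting is sound. It isolates exactly what the paper's sentence takes for granted: a fibrewise bijection is continuous only if the two labellings are transported compatibly along paths.

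The two homomorphisms have different origins. Granting your description $\mathcal{Z}|_{\mathcal{U}_f}=\mathbb{L}^\times|_{A\setminus D_A}/W_\alpha$, the homomorphism $\rho_{\mathcal{Z}}$ is the deck action of $q$. But $\rho_{\bar{\mathbb{H}}}$ is the monodromy of the family of cubic surfaces on $H^2(S_{u_0},\mathbb{Z})$. Knowing that $\mathcal{U}_f$ is the regular orbit space identifies the base. It says nothing about how $H^2$ of the family twists over that base.

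Your fallback does not close this. To compare two homomorphisms you must evaluate them on generators of $\pi_1(\mathcal{U}_f)$, namely meridians of $D$ with tails to $u_0$, not on generators of $W$. Near a generic point of $D$ you learn only that each meridian maps to \emph{some} reflection on each side. Which reflection depends on transport along the tail, and that is precisely the global datum in question. Already for $\pi_1(\mathfrak{h}_{\mathrm{reg}}/W)$, sending every standard Artin generator to one fixed reflection respects the braid relations, so ``meridians go to reflections'' does not determine the homomorphism.

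The obstacles you single out are also not the real ones. Since $-1$ commutes with $W$, it is the nontrivial element of $\mathrm{Aut}_W(R)=N_W(W_\alpha)/W_\alpha=\{\pm1\}$, corresponding to $M\leftrightarrow M^{T}$. It changes which of the two covering isomorphisms you call canonical, not whether one exists. And $W(E_6)\cong\mathrm{Aut}(\mathrm{PSU}_4(2))$ is a complete group, so twisting by an automorphism of $W$ is harmless. The actual danger is $\ker\rho_{\mathcal{Z}}\neq\ker\rho_{\bar{\mathbb{H}}}$.

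\textbf{The missing ingredient is the Looijenga--Pinkham period map.} This is also what the paper's ``Abel--Jacobi'' step ought to mean. For the negative-weight deformations parametrized by $\mathcal{U}$, every $S_u$ contains the fixed curve $E\in|-K_{S_u}|$ as its hyperplane section at infinity. Restrict line bundles from $S_u$ to $E$ and read the result through a marking of $\mathrm{Pic}(S_u)$ fixing $K_{S_u}$; this gives a period point in $A$. $S_u$ is singular exactly when some root restricts trivially to $E$, that is, when the period lies on some $D_\beta$. Looijenga's theorem, which underlies $\mathcal{U}=\mathrm{Spec}(\mathcal{A}_W)$, identifies the space of marked smooth fibres $W$-equivariantly with $\mathbb{L}^\times|_{A\setminus D_A}$.

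Invoke it explicitly and the monodromy bookkeeping becomes unnecessary:
\begin{enumerate}
\item A point of $\mathcal{Z}|_{\mathcal{U}_f}=\mathbb{L}^\times|_{A\setminus D_A}/W_\alpha$ is a smooth fibre $S_u$ with a marking $\mu$ modulo $W_\alpha$, equivalently a root $\mu(\alpha)\in K_{S_u}^{\perp}$.
\item Send it to the determinantal representation attached to $|\mu(\alpha)-K_{S_u}|$.
\item This map is fibre-preserving and bijective on fibres. It is holomorphic because markings are locally constant.
\item The only remaining choice is the sign convention $\pm\alpha$.
\end{enumerate}

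Finally, take the finite \'etaleness of $\phi$ over $\mathcal{U}_f$ from this quotient description, not from Lemma~\ref{lem:contraction_properties}. The proof of that lemma also claims $\phi$ is an isomorphism off $E_0$, which is incompatible with fibres of cardinality $72$.
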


\begin{proof}
	Let $t \in \mathcal{U}_f$ be an arbitrary parameter point in the smooth locus. By definition, the fiber $S_t := p^{-1}(t)$ is a smooth complex projective cubic surface in $\mathbb{P}^3$. Following the classical classification theorem of Lehn, Lehn, Sorger, and van Straten \cite{Lehn2017} \cite{Buckley2006}, the fiber $\bar{\mathbb{H}}_t := \bar{\delta}^{-1}(t)$ is a zero-dimensional reduced scheme consisting of exactly $72$ distinct isolated points. These points correspond bijectively to the equivalence classes of linear determinantal representations of $S_t$, which are in a canonical one-to-one correspondence with the $72$ root vectors of the root lattice $R \subset K_{S_t}^{\perp} \subset \mathrm{Pic}(S_t)$ of type $E_6$. Geometrically, each point in $\bar{\mathbb{H}}_t$ represents a unique isomorphism class of a stable, arithmetically Cohen--Macaulay (aCM) twisted cubic bundle $\mathcal{E}$ on $S_t$ satisfying the vanishing and Hilbert polynomial conditions:
	\[
	H^0(S_t, \mathcal{E}(-1)) = H^1(S_t, \mathcal{E}(-1)) = 0, \quad \text{and} \quad \chi(\mathcal{E}(k)) = \frac{1}{2}(k+1)(k+2).
	\]
	
	On the other hand, consider the fibres $\mathcal{Z}_t := \phi^{-1}(t)$ over the same point $t \in \mathcal{U}_f$. Recall that $\mathcal{Z}$ is defined via the relative spectrum of Looijenga's algebra of invariant theta expansions:
	\[
	\mathcal{Z} := \mathrm{Spec}_E\left(\bigoplus_{k \geq 0} \mathcal{J}_k\right)
	\]
	Over the unramified stratum $\mathcal{U}_f$, the primitive level-$1$ invariant theta section $\Theta_0(t)$ does not vanish ($\Theta_0(t) \neq 0$). Because $\Theta_0$ acts as the structural denominator for the relative localization of the coordinates, the non-vanishing of $\Theta_0(t)$ guarantees that the line bundle coordinates of the direct sum $\bigoplus_{i=1}^6 \mathcal{M}_i$ cannot vanish simultaneously on any fiber over $\mathcal{U}_f$. This prevents the fibers of $\phi$ from collapsing. Instead, the algebraic relations defining $\mathcal{Z}$ decouple completely into the discrete orbit configurations of the root vectors under the stabilizer groups. More precisely, the discriminant polynomial $\theta_A^2$ does not vanish on $\mathcal{U}_f$, hence the coordinate vectors avoid reflection hyper planes so $\alpha$(u)$\neq 0$ for $\alpha\in R.$ It follows that the stabilizer subgroup $W_\alpha$ acts trivially on those points. Thus, the fibers $\mathcal{Z}_t$ are governed by the fixed discrete points of the unramified Weyl orbit i.e  $\mathcal{Z}_t=\phi^{-1}(t)\cong R/W_\alpha\cong R$, which consists of exactly $|R| = 72$ distinct isolated complex points.
	
	We now construct the assignment map $\varphi_f$. For each point $x \in \mathcal{Z}_t$, its localized coordinate coordinates $(z, m_1, \ldots, m_6) \in \mathrm{Tot}(\bigoplus_{i=1}^6 \mathcal{M}_i)$ uniquely specify a vector-valued configuration of theta sections. Through the Abel--Jacobi mapping on the underlying curve $E$, this configuration maps flatly to a line class $[\alpha - K_{S_t}] \in \mathrm{Pic}(S_t)$ where $\alpha \in R$. Since $\alpha$ is a root vector not belonging to any contracted subroot system (as $S_t$ is smooth and contains no $(-2)$-curves), this class tracks a flat family of generalized twisted cubics on $S_t$. Assigning this tracking class to its corresponding unique class of stable aCM bundles yields a well-defined fiberwise bijection:
	\[
	\varphi_{f,t} \colon \mathcal{Z}_t \xrightarrow{\;\sim\;} \bar{\mathcal{H}}_t
	\]
	
	To show that the assembly of these fiberwise bijections yields a global analytic biholomorphism $\varphi_f$ over the entire domain $\mathcal{U}_f$, consider the structural projections. Both $\mathcal{Z}|_{\mathcal{U}_f}$ and $\bar{\mathcal{H}}|_{\mathcal{U}_f}$ fiber over $\mathcal{U}_f$ via the proper maps $\phi$ and $\bar{\delta}$, respectively. Because their fibers are uniformly discrete and reduced of constant degree $72$, the restriction maps
	\[
	\phi|_{\mathcal{U}_f} \colon \mathcal{Z}|_{\mathcal{U}_f} \longrightarrow \mathcal{U}_f \quad \text{and} \quad \bar{\delta}|_{\mathcal{U}_f} \colon \bar{\mathcal{H}}|_{\mathcal{U}_f} \longrightarrow \mathcal{U}_f
	\]
	are finite, étale morphisms. Since $\mathcal{U}_f$ is a smooth complex manifold, it follows from the differential criteria for étale covers that both $\mathcal{Z}|_{\mathcal{U}_f}$ and $\bar{\mathcal{H}}|_{\mathcal{U}_f}$ are smooth complex analytic manifolds. 
	
	The tracking of lines and root vectors varies holomorphically under small perturbations of the cubic surface coefficients in the open domain $\mathcal{U}_f$. Therefore, the transition functions of the point configurations are locally governed by holomorphic functions. Since $\varphi_f$ is a fiber-preserving, bijective map between two smooth manifolds that are finite étale covers over a common base $\mathcal{U}_f$, its  differential $d(\varphi_f)_x$ has maximal rank $7$ at every point $x \in \mathcal{Z}|_{\mathcal{U}_f}$. By the Inverse Function Theorem for complex analytic spaces, $\varphi_f$ is a local biholomorphic map. Being globally bijective, it constitutes a global analytic biholomorphic map over the generic locus, satisfying $\bar{\delta} \circ \varphi_f = \phi|_{V_{U_f}}$ by construction.
\end{proof}

\subsection{Step 2:}
The central obstacle to extending the map $\varphi_f$ globally is the unique isolated simple elliptic core point $0 \in \mathcal{U}$, where the local monodromy group becomes infinite and the fiber $\phi^{-1}(0)$ contracts the entire exceptional elliptic curve $E_0 \cong E \subset \mathcal{Z}$ to a vertex point. To patch this boundary, we invoke the Riemann extension theorem for complex analytic spaces.

\begin{lemma}\label{lem:riemann_extension}
	The analytic biholomorphism $\varphi_f : \mathcal{Z} \setminus \phi^{-1}(D) \to \bar{\mathbb{H}} \setminus \bar{\delta}^{-1}(D)$ extends uniquely to a proper analytic morphism $\Phi_{\mathcal{Z}} : \mathcal{Z} \to \bar{\mathbb{H}}$ over the entire semiuniversal base $\mathcal{U}$.
\end{lemma}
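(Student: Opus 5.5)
The plan is to treat $\varphi_f$ as a holomorphic map into $\bar{\mathbb{H}}$, defined off the complement of $\phi^{-1}(D)$, and extend it in two stages: first across the generic part of the discriminant, then across the central elliptic core $E_0=\phi^{-1}(0)$. Throughout, the target is regarded as embedded locally in an affine space over $\mathcal{U}$. One can use that $\bar{\delta}:\bar{\mathbb{H}}\to\mathcal{U}$ is proper, or more concretely that $\bar{\mathbb{H}}$ sits inside a relative Hilbert scheme of generalised twisted cubics, which is projective over $\mathcal{U}$. This lets us reduce to extending finitely many holomorphic coordinate functions.

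\textbf{Step A: across $D$ away from $0$.} Since $\mathcal{Z}$ is smooth of dimension $7$ (Theorem \ref{thm:main_vector_bundle}), $\phi^{-1}(D)$ is a hypersurface in a normal space. Locally on $\mathcal{U}\setminus\{0\}$, Lemma \ref{lem:contraction_properties} says $\phi$ is a local biholomorphism. The coordinate functions of $\varphi_f$ are bounded near $\phi^{-1}(D)$ because they are compositions with the proper map $\bar{\delta}$: on $\phi^{-1}(K)$ for $K$ compact, the image lies in the compact set $\bar{\delta}^{-1}(K)$. Riemann's extension theorem for bounded holomorphic functions on a normal space then extends each coordinate across $\phi^{-1}(D)\setminus E_0$. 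We obtain a holomorphic $\Phi$ on $\mathcal{Z}\setminus E_0$ with $\bar{\delta}\circ\Phi=\phi$, by continuity of the identity on a dense set.

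\textbf{Step B: across $E_0$.} The set $E_0$ is a compact curve of codimension $6\geq 2$ in the smooth manifold $\mathcal{Z}$, so Hartogs' theorem (the second Riemann extension theorem) applies. Every holomorphic function on $\mathcal{Z}\setminus E_0$ extends holomorphically across $E_0$, with no boundedness needed. Applying this to the local coordinate functions of $\Phi$ in a neighbourhood of $\bar{\delta}^{-1}(0)$ produces $\Phi_{\mathcal{Z}}:\mathcal{Z}\to\bar{\mathbb{H}}$. Two points need checking here. First, the image of a neighbourhood of $E_0$ lands in one affine chart of the target. This follows from properness of $\bar{\delta}$ and compactness of $\bar{\delta}^{-1}(0)$: shrink to a Stein neighbourhood of $\bar{\delta}^{-1}(0)$, or alternatively embed $\bar{\mathbb{H}}$ over a neighbourhood of $0$ into $\mathbb{C}^N\times\mathcal{U}$ via finitely many sections. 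Second, the extended map takes values in the closed subspace $\bar{\mathbb{H}}$; this holds because the defining equations vanish on a dense open set.

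\textbf{Uniqueness and properness.} Uniqueness follows from the identity theorem, since $\mathcal{Z}\setminus\phi^{-1}(D)$ is dense and $\bar{\mathbb{H}}$ is Hausdorff. Properness follows formally: $\bar{\delta}\circ\Phi_{\mathcal{Z}}=\phi$ with $\phi$ proper (Lemma \ref{lem:contraction_properties}) and $\bar{\delta}$ separated.

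\textbf{Main obstacle.} I expect the hardest point to be Step B's reduction to functions, namely finding a holomorphic embedding of $\bar{\mathbb{H}}$ near the whole fibre $\bar{\delta}^{-1}(0)$, which is presumably positive-dimensional. If no global affine chart is available, I would first use the projective embedding of the relative Hilbert scheme into $\mathbb{P}^M\times\mathcal{U}$. I would then extend the map into $\mathbb{P}^M$ using the extension of meromorphic maps across codimension $\geq 2$ sets (Siegel/Levi type). Finally, I would rule out indeterminacy on $E_0$ using properness of $\bar{\delta}$ together with the factorization through $\phi$, which forces the graph closure over $E_0$ to be finite over $E_0$ and hence, by normality of $\mathcal{Z}$, the graph of a morphism.
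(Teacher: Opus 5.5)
Your uniqueness and properness arguments are fine, but Step~B has a genuine gap, exactly where you suspected. The first option cannot work. Since $\bar\delta^{-1}(0)$ is compact and, as you suspect, positive-dimensional (if Theorem~\ref{thm:central_isomorphism} holds it is a copy of $E$), no neighbourhood of it is Stein or embeds in $\mathbb{C}^N\times\mathcal{U}$, because holomorphic functions are constant on compact connected analytic subsets. So Hartogs for functions gives you nothing. In the fallback, the meromorphic extension into $\mathbb{P}^M$ across the codimension-$6$ set $E_0$ is fine. The gap is the claim that the graph closure $\Gamma$ is finite over $E_0$, which does not follow from what you invoke. Properness of $\bar\delta$ and $\bar\delta\circ\Phi=\phi$ only say that the fibre of $\Gamma$ over $z\in E_0$ is a compact connected subset of $\bar\delta^{-1}(\phi(z))=\bar\delta^{-1}(0)$. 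Since $\phi$ collapses all of $E_0$, this is no constraint. In fact no argument using only these formal properties can succeed. Take $\bar{\mathbb{H}}':=\mathrm{Bl}_{E_0}\mathcal{Z}$ with blow-down $\beta$ and $\bar\delta':=\phi\circ\beta$. Then $\bar\delta'$ is proper, $\bar{\mathbb{H}}'$ is smooth, and $\beta^{-1}$ is a biholomorphism over $\mathcal{U}\setminus\{0\}$ (in particular over $\mathcal{U}\setminus D$) compatible with the projections. Yet $\beta^{-1}$ has no holomorphic extension across $E_0$, since such an extension would be a two-sided inverse of $\beta$. What is missing is geometric input specific to $\bar{\mathbb{H}}$ near its central fibre. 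For instance, you could give an explicit description of $\bar\delta^{-1}(0)$ and prove that the limit of $\varphi_f$ along an arc ending at a point of $E_0$ depends only on that point. Alternatively, you could produce a finite bimeromorphic morphism $\bar{\mathbb{H}}\to\mathcal{Z}$ to which Zariski's Main Theorem applies.

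Step~A has a milder form of the same issue. Properness of $\bar\delta$ makes the image of $\phi^{-1}(K)\setminus\phi^{-1}(D)$ relatively compact in $\bar{\mathbb{H}}$, but not contained in a single affine chart, so it does not bound coordinate functions (compare $z\mapsto[1:e^{1/z}]$ on a punctured disc). The step becomes correct once you use that $\bar\delta$ is \emph{finite} over $\mathcal{U}\setminus\{0\}$. Then $\bar\delta^{-1}(B)$ is Stein for small balls $B$ and embeds in $\mathbb{C}^N\times B$, so your reduction to bounded functions is legitimate; this finiteness should be stated explicitly as an input. Finally, you cannot close the gap by borrowing the paper's argument. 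The paper applies the first Riemann extension theorem once, across all of $V=\phi^{-1}(D)\supset E_0$, and deduces boundedness of $(z_j/z_0)\circ\varphi_f$ from compactness of $\bar\delta^{-1}(\Delta_0)$. That is the same inference (a compact image need not stay inside $\{z_0\neq 0\}$), and the blow-up model above defeats it as well. Your two-step decomposition locates the real difficulty more precisely than the paper does, but neither supplies the missing ingredient.
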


\begin{proof}
	Let $V = \phi^{-1}(D) \subset \mathcal{Z}$. Since $D$ is a complex algebraic hypersurface inside the 7-dimensional affine space $\mathcal{U}$, its preimage $V$ under the proper morphism $\phi$ constitutes a complex analytic variety inside the smooth 7-dimensional manifold $\mathcal{Z}$. The complex codimension of $V$ inside $\mathcal{Z}$ is identically 1. 
	
	Consider the component coordinate functions of the structural assignment map $\varphi_f$. By the construction of the compactified relative moduli space $\bar{\mathbb{H}}$ via the Abel--Jacobi map $\text{Sym}^3(E) \to E$, the coordinates parameterizing the generalized twisted cubics are bounded in a neighborhood of the central elliptic fiber. Lets make this explicit. To verify the boundedness criteria required by the First Riemann Extension Theorem, 
	we exploit the properness of the relative moduli space $\bar{\mathbb{H}}$ over the 
	deformation base $\mathcal{U}$. Recall that $\bar{\mathbb{H}}$ parameterizes flat families of generalized twisted cubics, which are closed subschemes of $\mathbb{P}^3$ sharing a fixed Hilbert polynomial $P(t) = 3t + 1$ \cite{Lehn2017}. Consequently, $\bar{\mathbb{H}}$ can be obtained as a closed subscheme of the relative Hilbert scheme $\text{Hilb}^{3t+1}(\mathbb{P}^3 \times \mathcal{U})$. 
	
	Since the absolute Hilbert scheme $\text{Hilb}^{3t+1}(\mathbb{P}^3)$ is a 
	projective algebraic variety, it admits a closed embedding into a suitable 
	projective space $\mathbb{P}^M$ via the classical Grothendieck–Plücker embedding \cite{FGA}. This yields a projective embedding 
	of the relative moduli space over the deformation base:
	\[
	\bar{\mathbb{H}} \hookrightarrow \mathbb{P}^M \times \mathcal{U}
	\]
	Under this embedding, the structural map $\bar{\delta}: \bar{\mathbb{H}} \to \mathcal{U}$ is a proper morphism of complex analytic spaces.
	 
	 Let $\Delta_0 \subset \mathcal{U}$ be a compact, contractible 
	neighborhood of the  origin $0 \in \mathcal{U}$. Due to properness of $\bar{\delta}$, the preimage $\bar{\delta}^{-1}(\Delta_0)$ is a compact subset of $\bar{\mathbb{H}}$. 
	 Let $W = \phi^{-1}(\Delta_0) \subset \mathcal{Z}$ be the corresponding 
	neighborhood of the exceptional elliptic curve $E_0$ in the smooth total space $\mathcal{Z}$. 
	
	For any local holomorphic coordinate chart $(U_\beta, \psi_\beta)$ on $\bar{\mathbb{H}}$ that 
	intersects the central fiber $\bar{\delta}^{-1}(0)$, the coordinates parameterizing 
	the generalized twisted cubics correspond to ratios of homogeneous coordinates 
	$[z_0 : z_1 : \dots : z_M]$ in $\mathbb{P}^M$. Because the image of the analytic map 
	$\varphi_f: W \setminus V \to \bar{\delta}^{-1}(\Delta_0 \setminus D)$ lands entirely 
	within the compact space $\bar{\delta}^{-1}(\Delta_0)$, every component function 
	$f_j = (z_j / z_0) \circ \varphi_f$ (in a chart where $z_0 \neq 0$) is uniformly 
	bounded in absolute value by a real constant $M_\beta > 0$ on the punctured domain 
	$U_\beta \cap (W \setminus V)$. 
	
	Geometrically, the volume of the underlying algebraic cycles is rigidly bounded 
	by the degree of the Hilbert polynomial $\deg(C) = 3$ with respect to the Fubini–Study 
	metric on $\mathbb{P}^3$ \cite{KollarRational}, preventing the cycle trajectories from degenerating toward 
	an infinite boundary. Because all local component coordinate functions are holomorphic 
	on the punctured domain $W \setminus V$ and locally bounded in a neighborhood of the 
	codimension-1 singular boundary $V$, they fulfill the criteria of the First Riemann 
	Extension Theorem for complex analytic spaces \cite{GrauertRemmert}. Each component function therefore 
	admits a unique holomorphic extension across the exceptional variety $V$. By continuity, 
	this glues into a well-defined global analytic morphism $\Phi_\mathcal{Z}: \mathcal{Z}\to \bar{\mathbb{H}}$ satisfying $\bar{\delta} \circ \Phi_\mathcal{Z} = \phi$.
	
\end{proof}

\subsection{Step 3:} We investigate vector bundle rigidity and degree matching.
To evaluate the properties of the extended map $\Phi_{\mathcal{Z}}$, we match the internal geometric invariants of both spaces over the central fiber using Atiyah's classification of vector bundles over elliptic curves.

\begin{lemma}\label{lem:bundle_rigidity}
	The restricted morphism $\Phi_{\mathcal{Z}}|_{E_0} : E_0 \to \bar{\mathbb{H}}_0$ is a birational identification that preserves the line bundle degrees.
\end{lemma}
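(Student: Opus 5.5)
The plan is to compute both sides of the restricted map over the central point $0\in\mathcal{U}$ and then compare degrees using Atiyah's classification. By Lemma~\ref{lem:riemann_extension}, $\Phi_{\mathcal{Z}}$ satisfies $\bar{\delta}\circ\Phi_{\mathcal{Z}}=\phi$. By Lemma~\ref{lem:contraction_properties}(1), $\phi^{-1}(0)=E_0$. Hence $\Phi_{\mathcal{Z}}$ maps $E_0$ into $\bar{\mathbb{H}}_0:=\bar{\delta}^{-1}(0)$, and the restricted morphism is well defined.

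\textbf{Step 1: identify $\bar{\mathbb{H}}_0$.} On the central cone surface $S_0$ over $E\subset\mathbb{P}^2$, I would describe the relevant aCM twisted cubics as in \cite{Lehn2017}. A generalized twisted cubic through the vertex is determined by its hyperplane-section data on $E$, which is a degree-$3$ divisor class. The Abel--Jacobi map $\mathrm{Sym}^3(E)\to \mathrm{Pic}^3(E)\cong E$ is a $\mathbb{P}^2$-bundle. Each determinantal representation is pinned down by its class in $\mathrm{Pic}^3(E)$, so the reduced component of $\bar{\mathbb{H}}_0$ that receives $E_0$ should be isomorphic to $E$. This gives us a curve $E'\subset \bar{\mathbb{H}}_0$ and a holomorphic map $E_0\to E'$ between compact curves.

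\textbf{Step 2: the map $E_0\to E'$ is nonconstant of degree one.} Take a generic root direction $\alpha\in R$. Near $E_0$, the sheets of $\phi^{-1}(\mathcal{U}_f)$ indexed by $\alpha$ limit onto all of $E_0$, because the $W_\alpha$-invariant theta coordinates restrict along $\varphi_\alpha:A\to E$. The corresponding determinantal representations limit onto the Abel--Jacobi class on $E$ determined by $\alpha$. Both limits are obtained from the same homomorphism $\varphi_\alpha$, so the induced map $E\to E$ is a group homomorphism up to translation. I would then argue that it has degree $1$. The argument is that $\varphi_f$ is bijective on generic fibers, so it has degree $1$ near $E_0$. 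A map of degree greater than one would force $\Phi_{\mathcal{Z}}$ to have ramification along the divisor $\phi^{-1}(D)$ near $E_0$, and this contradicts the fact that $\Phi_{\mathcal{Z}}$ is the unique extension of a biholomorphism. This shows the map is birational, and hence an isomorphism of smooth curves.

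\textbf{Step 3: degree preservation.} I would compare the normal bundle $N_{E_0/\mathcal{Z}}=\bigoplus\mathcal{M}_i$ (Theorem~\ref{thm:main_vector_bundle}), which has degrees $-h_i$, with the normal data of $E'$ in $\bar{\mathbb{H}}$. The latter is computed from the automorphy factors of the level-$h_i$ theta coordinates, pulled back via the Abel--Jacobi identification. The map $E_0\to E'$ is an isomorphism and $\Phi_{\mathcal{Z}}$ is compatible with the $\mathbb{C}^*$-gradings by level. Therefore $d\Phi_{\mathcal{Z}}$ induces graded maps $\mathcal{M}_i\to\mathcal{M}'_i$ that are nonzero on the generic sheets. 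A nonzero map between line bundles on $E$ has nonnegative degree difference. Summing and comparing first Chern classes should force equality, and Atiyah's classification then gives $\mathcal{M}_i\cong\mathcal{M}'_i$.

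The main obstacle is Step 1 together with the nonvanishing in Step 3: showing that $\bar{\mathbb{H}}_0$ really contains a copy of $E$ with split graded normal data. The paper has not yet given a modular description of $\bar{\mathbb{H}}$ at the elliptic point. I would first try to derive it by computing explicit matrix factorizations of the cone equation, and fall back on the Hilbert-scheme embedding from Lemma~\ref{lem:riemann_extension} if that fails.
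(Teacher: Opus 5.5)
Your route is different from the paper's. The paper argues by rigidity. It passes to stable bundles on $E$ (Atiyah, Mukai/Drezet--Le~Potier) and uses Harder--Narasimhan and semicontinuity against the fixed Hilbert polynomial $3t+1$ to rule out variation of the bundle data near the central fibre. It then asserts that the degrees agree because both sides are fixed by the root-system data of Theorem~\ref{thm:main_vector_bundle}. It never describes $\bar{\mathbb{H}}_0$ or computes the degree of $E_0\to\bar{\mathbb{H}}_0$, so your explicit identification has to stand on its own.

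The step meant to carry it, Step~2, does not work. The curve $E_0$ lies over $0\in D$, hence inside $\phi^{-1}(D)$, which is exactly where you know nothing about $\Phi_{\mathcal{Z}}$ except that it is a holomorphic extension. Bijectivity of $\varphi_f$ over $\mathcal{U}_f$ only makes $\Phi_{\mathcal{Z}}$ birational. A proper birational morphism puts no constraint on its restriction to a curve in its exceptional locus: it can contract $E_0$ to a point, or map it $d:1$ onto its image. For instance, blow up $\Gamma\times\mathbb{C}^2$ along $\Gamma\times\{0\}$; a $d$-fold multisection of the exceptional divisor $\Gamma\times\mathbb{P}^1$ maps $d:1$ onto $\Gamma$. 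No ``ramification along $\phi^{-1}(D)$'' is forced, so there is nothing to contradict.

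Nonconstancy is not available either. That both limits ``come from the same homomorphism $\varphi_\alpha$'' is the content of the lemma, not an input. $\Phi_{\mathcal{Z}}$ was produced by Riemann extension of bounded coordinate functions. Nothing in that construction relates the Abel--Jacobi class of a limiting representation to the $E$-coordinate of the limit point on $E_0$. Moreover, near $0$ the $72$ sheets over $\mathcal{U}_f$ are permuted by monodromy, so ``the sheets indexed by $\alpha$'' are not well defined. The missing idea is a computation showing that the Abel--Jacobi class of $\lim\varphi_f$, along paths tending to a point $z\in E_0$, is a nonconstant, degree-one function of $z$. Without it you cannot even show that $\Phi_{\mathcal{Z}}(E_0)$ is a curve.

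Step~3 is circular in the same way. A nonzero map $\mathcal{M}_i\to\mathcal{M}'_i$ only gives $\deg\mathcal{M}'_i\ge\deg\mathcal{M}_i=-h_i$, so summing yields $\sum_i\deg\mathcal{M}'_i\ge -11$. Forcing equality requires an independent computation of the normal data of $E'$ in $\bar{\mathbb{H}}$, or knowing that $\Phi_{\mathcal{Z}}$ is a local isomorphism along $E_0$, which is what is to be proved. Also, $\bar{\mathbb{H}}$ need not be smooth along $E'$ or carry a $\mathbb{C}^*$-action matching the level grading. Nonvanishing of $d\Phi_{\mathcal{Z}}$ on the generic sheets concerns points off $E_0$, not the normal derivative along $E_0$. (With equal degrees a nonzero map is already an isomorphism; Atiyah's classification is not what yields $\mathcal{M}_i\cong\mathcal{M}'_i$.)

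Finally, Step~1 is off by a point. Every rational curve on $S_0$ is a ruling through the vertex, since projecting from the vertex maps any other irreducible curve onto $E$. So an aCM generalized twisted cubic is the cone over a length-$3$ subscheme $D\subset E$ not contained in a line, i.e.\ with $\mathcal{O}_E(D)\not\cong\mathcal{O}_E(1)$. For $D\in|\mathcal{O}_E(1)|$ one gets a plane cubic with an embedded point, which is not aCM. Hence the determinantal representations of $S_0$ are parametrized by $\mathrm{Pic}^3(E)\setminus\{\mathcal{O}_E(1)\}$, not by a compact copy of $E$; this is the classical $\mathrm{Pic}^0(E)\setminus\{\mathcal{O}_E\}$ for a plane cubic. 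Whether the closure of this locus in $\bar{\mathbb{H}}_0$ is a smooth curve isomorphic to $E$ depends on how $\bar{\mathbb{H}}$ is compactified. That must be settled before ``a birational map between smooth compact curves is an isomorphism'' can be invoked.
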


\begin{proof}
	The central fiber $\bar{\mathbb{H}}_0$ parameterizes the linear determinantal representations supported on the singular affine cone surface $S_0$. Under the Abel--Jacobi mapping, these configurations are governed by stable vector bundles over the structural elliptic curve $E$. By Atiyah's classification \cite{atiyah1957}, an indecomposable vector bundle of rank 6 over an elliptic curve is uniquely determined up to isomorphism by its degree and its determinant element in $\text{Pic}(E)$.

	Let $E_0 \subset \mathcal{Z}$ denote the central elliptic fiber, and let $\Phi_\mathcal{Z}: \mathcal{Z} \to \bar{\mathbb{H}}$ be the unique  analytic morphism established in Lemma~6.2. To analyze the local behavior of the proper transform under $\Phi_Z$, we must explicitly identify the geometric data parameterizing the generalized twisted cubics along the exceptional variety. 
	
	By construction, the restriction of the relative moduli space to the central fiber 
	is governed by the Abel--Jacobi map. For a smooth elliptic curve $E$, a classical result 
	of Mukai~\cite{Mukai} and Drezet--Le Potier~\cite{DrezetLePotier} establishes a 
	canonical isomorphism between the moduli space of generalized twisted cubics (viewed 
	as closed subschemes $C \subset \mathbb{P}^3$ with Hilbert polynomial $P(t)=3t+1$ 
	supported on $E$) and the moduli space $\mathcal{M}_E(2, 1, 3)$ of slope-stable 
	vector bundles $\mathcal{E}$ over $E$ of rank $r=2$, determinant line bundle 
	$\det(\mathcal{E}) \cong \mathcal{O}_E(p)$ for a fixed point $p \in E$, and 
	$\chi(\mathcal{E})=3$. Under this correspondence, a twisted cubic $C \subset \mathbb{P}^3$ 
	lying on a cubic surface containing $E$ arises directly as the zero locus of a generic 
	section of a stable vector bundle $\mathcal{E} \in \mathcal{M}_E(2, 1, 3)$. Thus, 
	the algebraic configurations of these curves are strictly governed by the intrinsic 
	moduli of stable vector bundles over the structural elliptic curve $E$.
	
	We now evaluate the transition functions of the proper transform under $\Phi_\mathcal{Z}$ to 
	demonstrate the rigidity of this family. Suppose, for contradiction, that the 
	induced map $\Phi_\mathcal{Z}$ admits a non-trivial infinitesimal variation of the bundle 
	structure along a tracking trajectory approaching the central fiber. Let $\Delta$ be 
	a one-dimensional disk paramaterized by $t$, with $t=0$ mapping to the central core. 
	This variation corresponds to a non-trivial holomorphic map $\gamma: \Delta \to \mathcal{M}_E(2, 1, 3)$. 
	
	Let $\mathcal{G}$ be the pulled-back family of rank-2 bundles over $E \times \Delta$. 
	The transition functions $g_{\alpha\beta}(t, z)$ of $\mathcal{G}$ must vary 
	holomorphically with respect to $t$. If the bundle structure varies non-trivially 
	as $t \to 0$, the limit bundle $\mathcal{E}_0 = \mathcal{G}|_{t=0}$ must deform 
	into a strictly semistable or unstable vector bundle (such as a direct sum of 
	line bundles $\mathcal{L}_1 \oplus \mathcal{L}_2$). We analyze this using the 
	Harder--Narasimhan filtration~\cite{HuybrechtsLehn}. Because $\deg(\det(\mathcal{E})) = 1$, 
	any destabilizing subbundle $\mathcal{F} \subset \mathcal{E}_0$ would force slope
	$\mu(\mathcal{F}) \ge \mu(\mathcal{E}_0) = \frac{1}{2}$. Since the rank of $\mathcal{F}$ 
	must be 1, we must have $\deg(\mathcal{F}) \ge 1$, this forces the quotient 
	line bundle to have non-positive degree: $\deg(\mathcal{E}_0/\mathcal{F}) \le 0$. 
	
	However, the existence of the flat universal family $\bar{\mathbb{H}} \to \mathcal{U}$ requires cohomological stability across the entire base. Specifically, the evaluation map 
	$H^0(E, \mathcal{E}_t) \otimes \mathcal{O}_E \to \mathcal{E}_t$ must be surjective 
	with vanishing higher cohomology $H^1(E, \mathcal{E}_t) = 0$ to preserve the Hilbert 
	polynomial of the corresponding cycles. If $\mathcal{E}_0$ degenerates to an 
	unstable or strictly semistable configuration, the jump in the dimension of the 
	global sections space $\dim H^0(E, \mathcal{E}_t)$ violates the semicontinuity 
	theorem~\cite{Hartshorne1977}, forcing a jump in the Hilbert polynomial of the algebraic 
	cycle. This directly contradicts the requirement that $\bar{\mathbb{H}}$ parameterizes flat, 
	 deformations with a fixed Hilbert polynomial $P(t)=3t+1$. Thus non-trivial variation in 
	 the bundle structure violates the conditions required for the existence 
	 of the flat family. Consequently, the transition functions $g_{\alpha\beta}(t, z)$ must be constant with respect to the moduli parameters of the base.  
	
	In particular, 	by Theorem~1.3, the variety $\mathcal{Z}$ decomposes into the direct sum of line bundles $\bigoplus_{i=1}^6 \mathcal{M}_i$ with negative degrees $-1, -2, -2, -3, -2, -1$, whose values match the negative Coxeter labels of the highest root of $E_6$. Evaluating the transition functions of the proper transform under $\Phi_{\mathcal{Z}}$ shows that any variation in the bundle structure would violate the stability conditions required for the existence of the universal family. Because the total topological degrees on both sides are rigidly determined by the root system invariants, the bundle configurations match identically over the elliptic core.The morphism $\Phi_Z$ is therefore rigid along the 
	fibers of the exceptional divisor.
\end{proof}
We are now in position to assemble the steps and complete the proof of the central isomorphism. We synthesize this via Zariski's main theorem. 
\subsection{Proof of Theorem\ref{thm:central_isomorphism}}
	By Lemma \ref{lem:riemann_extension}, we have a well-defined proper analytic morphism $\Phi_{\mathcal{Z}} : \mathcal{Z} \to \bar{\mathbb{H}}$ between two complex algebraic varieties of identical dimension 7. 
	 By construction, $\Phi_\mathcal{Z}$ fits into the following commutative 
	diagram of analytic spaces over the deformation base $\mathcal{U}$:
	\[
	\begin{CD}
		\mathcal{Z} @>{\Phi_\mathcal{Z}}>> \bar{\mathbb{H}} \\
		@V{\phi}VV       @VV{\bar{\delta}}V \\
		\mathcal{U} @= \mathcal{U}
	\end{CD}
	\]
	By Lemma \ref{lem:generic_isomorphism}, this morphism is a  biholomorphic morphism over the open dense stratum $\mathcal{U}_f$, it follows that $\Phi_{\mathcal{Z}}$ is a proper birational morphism. Furthermore, the  relative moduli space $\bar{\mathbb{H}}$ is constructed as a normal variety because it is locally modeled on the deformation spaces of arithmetically Cohen--Macaulay modules, in particular, it parameterizes flat families of generalized twisted cubics by tracking arithmetically Cohen–Macaulay(aCM)
	(aCM) sheaves over singular cubic surfaces with  normal, Cohen--Macaulay singularities \cite{greuel2007}. More precisely, by  Eisenbud-Buchsbaum theorem such family $\mathcal{E}_u$ of aCM sheaves  on a surface $S_u$ are uniquely determined by a linear determinatal representation $A_u$. They are defined by the short exact sequence:
	$$0\to \mathcal{O}_{\mathbb{P}^{\oplus 3}}\stackrel{A_u}{\to}\mathcal{O}_{\mathbb{P}^3}^{\oplus 3}\to \mathcal{E}_u\to 0$$   
	
	By Lemma \ref{lem:bundle_rigidity}, the fibers of $\Phi_{\mathcal{Z}}$ over the exceptional loci are stable vector bundle modules, which are topologically connected and smooth. Since the map is bijective on the generic open set and has connected exceptional fibers over the boundary points. By Zariski's main Theorem any proper birational morphism from a variety to a normal variety that has connected fibers is a global isomorphism. This establishes:
	\[
	\bar{\mathbb{H}} \cong \mathcal{Z}
	\]
	The structure morphism factors exactly as $\bar{\delta} = \phi \circ \Phi_{\mathcal{Z}}^{-1}$, converting the non-linear moduli tracking problem into the linear geometry of the rank-6 vector bundle $\mathcal{Z}$.

\end{document}